\newcommand{\reals}{\mathbb{R}}
\newcommand{\minimize}{\mathop{\mathrm{minimize}{}}}
\newcommand{\argmin}{\mathop{\mathrm{arg\,min}{}}}
\newcommand{\supp}{\mathrm{supp}}
\newcommand{\sgn}{\mathrm{sgn}}
\newcommand{\gammaInc}{\gamma_\mathrm{inc}}
\newcommand{\gammaDec}{\gamma_\mathrm{dec}}
\newcommand{\Lmin}{L_\mathrm{min}}
\newcommand{\SoftThresholding}{\mathrm{soft}}
\newtheorem{lemma}{Lemma}
\newtheorem{theorem}{Theorem}
\newtheorem{corollary}{Corollary}
\newtheorem{definition}{Definition}
\newtheorem{assumption}{Assumption}
\title{A Proximal-Gradient Homotopy Method for the
Sparse Least-Squares Problem}
\author{
Lin Xiao\thanks{
Machine Learning Department, Microsoft Research, Redmond, WA 98052. 
Email: \texttt{lin.xiao@microsoft.com}} 
\and
Tong Zhang\thanks{
Department of Statistics, Rutgers University, Piscataway, NJ, 08854. 
Email: \texttt{tzhang@stat.rutgers.edu}}
}
\begin{document}
\maketitle

\begin{abstract} 
We consider solving the $\ell_1$-regularized least-squares ($\ell_1$-LS) 
problem in the context of sparse recovery, for applications such as 
compressed sensing.
The standard proximal gradient method, also known as iterative 
soft-thresholding when applied to this problem, has low computational cost 
per iteration but a rather slow convergence rate.
Nevertheless, when the solution is sparse, it often exhibits fast linear 
convergence in the final stage. 
We exploit the local linear convergence using a homotopy continuation strategy,
i.e., we solve the $\ell_1$-LS problem for a sequence of decreasing values 
of the regularization parameter, and use an approximate solution at the end 
of each stage to warm start the next stage.
Although similar strategies have been studied in the literature, 
there have been no theoretical analysis of their global iteration complexity.
This paper shows that under suitable assumptions for sparse recovery, 
the proposed homotopy strategy ensures that 
all iterates along the homotopy solution path are sparse.
Therefore the objective function is effectively strongly convex along the 
solution path, and geometric convergence at each stage can be established.
As a result, the overall iteration complexity of our method is 
$O(\log(1/\epsilon))$ for finding an $\epsilon$-optimal solution, which
can be interpreted as global geometric rate of convergence.
We also present empirical results to support our theoretical analysis.
\end{abstract}

\section{Introduction}
In this paper, we propose and analyze an efficient numerical method for 
solving the \emph{$\ell_1$-regularized least-squares} ($\ell_1$-LS) problem 
\begin{equation}\label{eqn:l1-LS}
\minimize_x \quad \frac{1}{2}\|A x - b\|_2^2 + \lambda \|x\|_1,
\end{equation}
where $x\in\reals^n$ is the vector of unknowns, 
$A\in\reals^{m\times n}$ and $b\in\reals^m$ are the problem data,
and $\lambda>0$ is a regularization parameter.
Here $\|\cdot\|_2$ denotes the standard Euclidean norm,
and $\|x\|_1=\sum_i|x_i|$ is the $\ell_1$ norm of~$x$.
This is a convex optimization problem, and we use 
$x^\star(\lambda)$ to denote its (global) optimal solution.
Since the $\ell_1$ term promotes sparse solutions, we also refer
problem~(\ref{eqn:l1-LS}) as the \emph{sparse least-squares} problem.

The $\ell_1$-LS problem has important applications in machine learning, 
signal processing, and statistics; see, e.g., 
\cite{Tibshirani96, ChenDS98, BrucksteinDonohoElad09}.
It received revived interests in recent years due to the emergence of
\emph{compressed sensing} theory, which builds upon the fundamental idea that
a finite-dimensional signal having a sparse or compressible representation 
can be recovered from a small set of linear, nonadaptive measurements
\cite{CandesRT06,CandesTao06,Donoho06}.
We are especially interested in solving the $\ell_1$-LS problem in such a 
context, with the goal of recovering a sparse vector under measurement noise.
More precisely, we assume~$A$ and~$b$ in~(\ref{eqn:l1-LS}) are related by 
a linear model
\[
b=A \bar{x} + z,
\]
where $\bar{x}$ is the sparse vector we would like to recover in
statistical applications, and~$z$ is a noise vector.
We assume that the noise level, measured by $\|A^T z\|_\infty$, 
is relatively small compared with the regularization parameter~$\lambda$.
This scenario is of great modern interest, and various properties of 
the solution $x^\star(\lambda)$ have been investigated 
\cite{CandesTao05,DonohoEladTemlyakov06,MeinshausenB06,Tropp06,ZhaoY06,
CandesTao07,ZhangHuang08,ZhangT09,BickelRT09,Koltchinskii09,vandeGeerB09,
Wainwright09}. 
In particular, it is known that under suitable conditions on~$A$ such as 
the \emph{restricted isometry property} (RIP), and as long as 
$\lambda \geq c \|A^T z\|_\infty$ (for some universal constant $c$),
one can obtain a recovery bound of the optimal form 
\begin{equation}
\|x^\star(\lambda)-\bar{x}\|_2^2 = O\left(\lambda^2 \|\bar{x}\|_0 \right) , 
\label{eq:opt-recovery}
\end{equation}
where $\|\bar{x}\|_0$ denotes the number of nonzero elements in~$\bar{x}$.
The constant in $O(\cdot)$ depends only on the so-called RIP condition
that we will discuss later on, 
and this bound achieves the optimal order of recovery.
Moreover, it is known that in this situation, the solution $x^\star(\lambda)$ 
is sparse \cite{ZhangHuang08}, and the sparsity of the solution is 
closely related to the recovery performance.

In this paper, we develop an efficient numerical method for solving the 
$\ell_1$-LS problem in the context of sparse recovery described above.
In particular, we focus on the case when $m<n$ 
(i.e., the linear system $Ax=b$ is underdetermined)
and the solution $x^\star(\lambda)$ is sparse 
(which requires the parameter~$\lambda$ to be sufficiently large).
Under such assumptions, our method has provable lower complexity 
than previous algorithms.

The $\ell_1$-LS problem~(\ref{eqn:l1-LS}) is closely related to the following
two constrained convex optimization problems:
\begin{equation}\label{eqn:Lasso}
\minimize_x \quad \|A x - b\|_2^2 
\quad \mbox{subject to} \quad \|x\|_1 \leq \Delta,
\end{equation}
known as the \emph{least absolute shrinkage and selection operator} (LASSO)
\cite{Tibshirani96}, and
\begin{equation}\label{eqn:l1-min}
\minimize_x \quad  \|x\|_1 
\quad \mbox{subject to} \quad \|A x - b\|_2^2\leq \varepsilon,
\end{equation}
where $\Delta$ and $\varepsilon$ are two nonnegative real parameters. 
These problems have the same solution as~(\ref{eqn:l1-LS}) for appropriate 
choices of the parameters $\lambda$, $\Delta$ and $\epsilon$.
However, other than in some special cases, the exact correspondence between 
these parameters are not known a priori.
Therefore, algorithms that are specific for solving one formulation 
may not be used directly for solving others.
Nevertheless, our method can be adapted to solve~(\ref{eqn:Lasso}) 
and~(\ref{eqn:l1-min}) efficiently, 
either by using an augmented Lagrangian approach \cite{YinOsherGD08},
or by using a root-finding procedure similar as the one given in
\cite{vandenBergFriedlander08}.

\subsection{Previous algorithms}
\label{sec:previous-algm}
There have been extensive research on numerical methods for solving
the problems~(\ref{eqn:l1-LS}), (\ref{eqn:Lasso}) and~(\ref{eqn:l1-min}).
A nice survey of major practical algorithms for sparse approximation 
appeared in \cite{TroppWright10}, and performance comparisons of various 
algorithms can be found in, e.g., 
\cite{WrightNF09,WenYinGZ10,BeckerBobinCandes11}.
Here we briefly summarize the computational complexities of several methods 
that are most relevant for solving the $\ell_1$-LS problem~(\ref{eqn:l1-LS}), 
in terms of finding an $\epsilon$-optimal solution 
(i.e., obtaining an objective value within~$\epsilon$ of the global minimum).

\emph{Interior-point methods} were among the first approaches used for
solving the $\ell_1$-LS problem \cite{ChenDS98,TurlachVW05,KKLBG07}.
The theoretical bound on their iteration complexity is 
$O\left(\sqrt{n}\log(1/\epsilon)\right)$, 
although their practical performance demonstrate much weaker dependence on~$n$.
The bottleneck of their performance is the computational cost per iteration.
For example, with an unstructured dense matrix~$A$, the standard approach of
solving the normal equation in each iteration with a direct method 
(Cholesky factorization) would cost $O(m^2 n)$ flops,
which is prohibitive for large-scale applications.
Therefore all customized solvers \cite{ChenDS98,TurlachVW05,KKLBG07}
use iterative methods (such as conjugate gradients) for solving the 
linear equations. 
These methods only require matrix-vector multiplications involving~$A$ 
and~$A^T$, and the computational cost per iteration can be $O(mn)$.
The cost can be further reduced if the matrix-vector multiplication can be
conducted more efficiently, 
e.g., $O(n\log n)$ if~$A$ is a partial Fourier matrix.

\emph{Proximal gradient methods} for solving the $\ell_1$-LS problem 
take the following basic form at each iteration $k=0,1,\ldots$
\begin{equation}\label{eqn:prox-grad-step}
x^{(k+1)} = \argmin_y \left\{ f(x^{(k)}) + \nabla\!f(x^{(k)})^T(y-x^{(k)}) 
+ \frac{L_k}{2}\|y-x^{(k)}\|_2^2 + \lambda \|y\|_1 \right\},
\end{equation}
where we used the shorthand $f(x)=(1/2)\|Ax-b\|_2^2$, 
and $L_k$ is a parameter chosen at each iteration
(e.g., using a line-search procedure).
The minimization problem in~(\ref{eqn:prox-grad-step}) has a 
closed-form solution
\begin{equation}\label{eqn:IST}
x^{(k+1)} = \SoftThresholding 
\left( x^{(k)}-\frac{1}{L_k}\nabla\!f(x^{(k)})\, ,~\frac{\lambda}{L_k}\right),
\end{equation}
where $\SoftThresholding:\reals^n \times \reals^+ \to \reals^n$ is the 
well-known \emph{soft-thresholding} operator, defined as
\begin{equation}\label{eqn:soft}
(\SoftThresholding(x,\alpha))_i 
= \sgn(x_i) \max \left\{ |x_i| - \alpha, ~0 \right\},
\quad i=1,\ldots,n.
\end{equation}
Iterative methods that use the update rule~(\ref{eqn:IST}) include
\cite{DaubechiesDD04,CombettesWajs05,Nesterov07composite, 
HaleYinZhang08,WrightNF09}.
Their major computational effort per iteration is to form the gradient
$\nabla\! f(x)=A^T(Ax-b)$, which costs $O(mn)$ flops for a generic
dense matrix~$A$.
With appropriate choices of the parameters~$L_k$, the proximal-gradient 
method~(\ref{eqn:prox-grad-step}) has an iteration complexity $O(1/\epsilon)$.
  
Indeed, the iteration complexity $O(\log(1/\epsilon))$ can be established 
for~(\ref{eqn:prox-grad-step}) if $m\geq n$ and $A$ has full column rank, 
since in this case the objective function in~(\ref{eqn:l1-LS}) is 
strongly convex \cite{Nesterov07composite}.
Unfortunately this result is not applicable to the case $m<n$.
Nevertheless, when the solution $x^\star(\lambda)$ is sparse and the active 
submatrix is well conditioned (e.g., when~$A$ has RIP),
local linear convergence can be established \cite{LuoTseng92,HaleYinZhang08}, 
and fast convergence in the final stage of the algorithm has also been 
observed \cite{Nesterov07composite,HaleYinZhang08,WrightNF09}.

\emph{Variations and extensions of the proximal gradient method}
have been proposed to speed up the convergence in practice; see, e.g., 
\cite{BioucasFigueiredo07,WrightNF09,WenYinGZ10}.
Nesterov's optimal gradient methods for minimizing smooth convex functions
\cite{Nesterov83,Nesterov04book,Nesterov05} 
have also been extended to minimize composite objective functions such as 
in the $\ell_1$-LS problem 
\cite{Nesterov07composite,Tseng08,BeckTeboulle09,BeckerBobinCandes11}.
These accelerated methods have the iteration complexity $O(1/\sqrt{\epsilon})$.
They typically generate two or three concurrent sequences of iterates,
but their computational cost per iteration is still $O(mn)$, 
which is the same as simple gradient methods. 

\emph{Exact homotopy path-following methods} were developed in the statistics
literature to compute the complete \textsc{LASSO} path when varying
the regularization parameter~$\lambda$ from large to small
\cite{OsbornePT00a, OsbornePT00b,EfronHJT04}.
These methods exploit the piece-wise linearity of the solution as a function
of~$\lambda$, and identify the next breakpoint along the solution path
by examining the optimality conditions
(also called \emph{active set} or \emph{pivoting} method in optimization).
With efficient numerical implementations (using updating or downdating of 
submatrix factorizations), the computational cost at each break point is 
$O(mn+ms^2)$, where~$s$ is the number of nonzeros in the solution at the
breakpoint. Such methods can be quite efficient if~$s$ is small.
However, in general, there is no convergence result bounding the number of 
breakpoints for this class of methods
(for some special cases, the number of breakpoints is the same as the number 
of nonzeros in the solution \cite{DonohoTsaig08}).

\emph{Greedy algorithms} such as \emph{orthogonal matching pursuit (OMP)} 
are also very popular for sparse recovery applications 
(e.g., \cite{DavisMallatA97,Tropp04,NeedellTropp09}).
However, they are not designed to solve any of the optimization 
problems~(\ref{eqn:l1-LS}), (\ref{eqn:Lasso}) or~(\ref{eqn:l1-min}).
Their connections with exact homotopy methods are analyzed in 
\cite{DonohoTsaig08}.

\subsection{Proposed approach and contributions}
We consider an \emph{approximate} homotopy continuation method,
where the key idea is to solve~(\ref{eqn:l1-LS}) with a large regularization
parameter $\lambda$ first, and then gradually decreases $\lambda$ 
until the target regularization is reached. 
For each fixed $\lambda$, we employ a proximal gradient method 
of the form~(\ref{eqn:prox-grad-step}) to solve (\ref{eqn:l1-LS}) 
up to an adequate precision (to be specified later), 
and then use this approximate solution to serve as the initial point for 
the next value of $\lambda$.
We call the resulting method \emph{proximal-gradient homotopy} (PGH) method.

This is not a new idea. 
Approximate homotopy continuation methods that use proximal gradient methods
for solving each stage (with a fixed value of~$\lambda$) have been studied in, 
e.g., \cite{HaleYinZhang08,WrightNF09,WenYinGZ10}, and superior empirical 
performance have been reported when the solution is sparse.
However, there has been no effective theoretical analysis for their overall
iteration complexity.
As a result, some important algorithmic choices are mostly based on 
heuristics and ad hoc factors.
More specifically, how do we choose the sequence of decreasing values 
for~$\lambda$? and how accurate should we solve the problem~(\ref{eqn:l1-LS})
for each value in this sequence? 

In this paper, we present a PGH method that has provable low iteration 
complexity, along with the following specific algorithmic choices:
\begin{itemize}
\item We use a decreasing geometric sequence for the values of~$\lambda$.
That is, we choose a $\lambda_0$ and a parameter $\eta\in(0,1)$, 
and let $\lambda_K=\eta^K\lambda_0$ for $K=1,2,\ldots$ 
until the target value is reached.
\item We choose a parameter $\delta\in(0,1)$ and solve 
problem~(\ref{eqn:l1-LS}) for each~$\lambda_K$ with a proportional 
precision~$\delta\lambda_K$ (in terms of violating the optimality condition),
except that for the final target value of~$\lambda$, 
we reach the absolute precision $\epsilon$. 
\item We use Nesterov's adaptive line-search strategy in 
\cite{Nesterov07composite} to choose the parameters~$L_k$ 
in the proximal gradient method~(\ref{eqn:prox-grad-step}).
\end{itemize}
Under the assumptions that the target value of~$\lambda$ is sufficiently large
(such that the final solution is sparse)
and the matrix~$A$ satisfies a RIP-like condition, our PGH method exhibits
geometric convergence at each stage, and the overall iteration complexity
is $O(\log(1/\epsilon))$. 
The constant in $O(\cdot)$ depends on the RIP-like condition.
Moreover, it is sufficient to choose $\lambda \geq c \|A^T z\|_\infty$ 
(for some universal constant $c$), which implies that the solution satisfies 
a recovery bound of the optimal form (\ref{eq:opt-recovery}).
Since each iteration of the proximal gradient method cost $O(mn)$ flops,
the overall computational complexity is $O(mn\log(1/\epsilon))$, 
implying global geometric rate of convergence.

The low iteration complexity of our PGH method is achieved by actively
exploiting the fast local linear convergence of the standard proximal
gradient method when the solution $x^\star(\lambda)$ is sparse 
\cite{LuoTseng92,HaleYinZhang08}.
Using the homotopy continuation strategy, the proximal gradient method
at each stage always starts with a point that is close to its solution.
Moreover, by choosing appropriate parameters~$\eta$ and~$\delta$ in our method,
we ensure that all iterates along the solution path 
(i.e., not only the final points) at each stage are sufficiently sparse. 
Under a RIP-like assumption on~$A$, this implies that along the homotopy path, 
the objective function in~(\ref{eqn:l1-LS}) is effectively strongly convex, 
and hence global geometric rate can be established using Nesterov's analysis 
\cite{Nesterov07composite}.

The advantage of our method over the exact homotopy path-following approach 
(\cite{OsbornePT00a, OsbornePT00b,EfronHJT04})
is that there is no need to keep track of all breakpoints.
In fact, for large-scale problems, the total number of proximal gradient steps 
in our method can be much smaller than the number of nonzeros 
in the target solution, which is the minimum number of breakpoints
the exact homotopy methods have to compute.
This phenomenon is predicted by our low iteration complexity, 
and also confirmed in our empirical studies. 

Compared with interior-point methods (IPMs), our methods has a similar 
iteration complexity (actually better in terms of theoretical bounds), 
and computationally can be much more efficient for each iteration.
The approximate homotopy strategy used in this paper is also analogous to 
the long-step path-following IPMs (e.g., \cite{Nesterov96}),
in the sense that the least-squares problem becomes better conditioned 
near the regularization path (cf.\ \emph{central path} in IPMs).
However, our results only hold for problems with provable sparse solutions,
and the parameters~$\eta$ and~$\delta$ depends on the problem data~$A$ 
and the regularization parameter~$\lambda$.
In contrast, the performance of interior-point methods is insensitive
to the sparsity of the solution or the regularization parameter.

As an important special case, our results can be immediately applied to 
noise-free compressed-sensing applications. 
Consider the \emph{basis pursuit (BP)} problem
\begin{equation}\label{eqn:BP}
\minimize \quad \|x\|_1 \quad \mbox{subject to} \quad A x =  b,
\end{equation}
which is a special case of~(\ref{eqn:l1-min}) with $\varepsilon=0$.
Its solution can be obtained by running our PGH method on the $\ell_1$-LS 
problem~(\ref{eqn:l1-LS}) with $\lambda\to 0$.
In terms of satisfying the condition $\lambda > c\, \|Az\|_\infty$, 
any $\lambda>0$ is sufficiently large in the noise-free case because $z=0$.
Therefore, the global geometric convergence of the PGH method for BP is just 
a special case of the more general result for (\ref{eqn:l1-LS}) developed 
in this paper.

It is also worth mentioning that variants of the proximal gradient 
method~(\ref{eqn:prox-grad-step}) can be directly applied to the constrained 
LASSO formulation~(\ref{eqn:Lasso}).
Moreover, under suitable conditions and when the parameter~$\Delta$ is set
to nearly equal to $\|\bar{x}\|_1$, geometric convergence 
\emph{away} from the optimal solution can be established \cite{AgNeWa11}.
However, for sparse recovery applications, such a result is less 
satisfactory than the homotopy approach we analyze in this paper 
due to the requirement of estimating $\|\bar{x}\|_1$ 
--- which is extremely difficult to determine efficiently in practice even
for the simple noise-free case of basis pursuit.
The proof techniques are also different, and the analysis of 
geometric convergence for PGH is more difficult than that of \cite{AgNeWa11}, 
because we have to demonstrate sparsity of all the intermediate solutions 
in the proximal gradient steps along the homotopy path. 
A significantly simpler argument can be used in \cite{AgNeWa11}, 
if the extra knowledge of $\|\bar{x}\|_1$ is known a priori.

\subsection{Outline of the paper}
In Section~\ref{sec:preliminaries}, we review some preliminaries
that are necessary for developing our method and its convergence analysis.
In Section~\ref{sec:PGH-method}, we present our proximal-gradient homotopy 
(PGH) method, and state the assumptions and the main convergence results.
Section~\ref{sec:analysis} is devoted to the proofs of our convergence results. 
We present numerical experiments in Section~\ref{sec:experiments} to
support our theoretical analysis, 
and conclude in Section~\ref{sec:discussions} with some further discussions.

\section{Preliminaries and notations}
\label{sec:preliminaries}

In this section, we first introduce composite gradient mapping and some
of its key properties developed in \cite{Nesterov07composite}.
Then we describe Nesterov's proximal gradient method with adaptive line search,
which we will use to solve the $\ell_1$-LS problem at each stage of our PGH 
method.
Finally we discuss the restricted eigenvalue conditions that allow us
to show the local linear convergence of Nesterov's algorithm.

\subsection{Composite gradient mapping}
Consider the following optimization problem with \emph{composite} 
objective function:
\begin{equation}\label{eqn:composite-min}
\minimize_x \quad \left\{ \phi(x) \triangleq f(x) + \Psi(x) \right\},
\end{equation}
where the function $f$ is convex and differentiable, 
and~$\Psi$ is closed and convex on $\reals^n$.
The optimality condition of~(\ref{eqn:composite-min}) states that 
$x^\star$ is a solution if and only if there exists 
$\xi\in\partial\Psi(x^\star)$ such that
\[
\nabla\! f(x^\star)  + \xi = 0
\]
(see, e.g., \cite[Section~27]{Rockafellar70}).
Therefore, a good measure of accuracy for any~$x$ as an approximate
solution is the quantity
\begin{equation}\label{eqn:opt-residue}
\omega(x) \triangleq \min_{\xi\in\partial\Psi(x)} \|\nabla\! f(x)+\xi\|_\infty.
\end{equation}
We call $\omega(x)$ the \emph{optimality residue} of~$x$.
We will use it in the stopping criterion of the proximal gradient method.

Composite gradient mapping was introduced by Nesterov in
\cite{Nesterov07composite}.
For any fixed point~$y$ and a given constant $L>0$, 
we define a local model of~$\phi(x)$ around~$y$
using a quadratic approximation of~$f$ but keeping $\Psi$ intact:
\[
\psi_L(y;x) = f(y) + \nabla\! f(y)^T (x-y) 
+ \frac{L}{2}\|x-y\|_2^2 + \Psi(x).
\]
Let
\begin{equation}\label{eqn:prox-grad}
T_L(y) = \argmin_x ~\psi_L(y; x).
\end{equation}
Then the \emph{composite gradient mapping} of~$f$ at~$y$ is defined as
\[
g_L(y) = L(y-T_L(y)).
\]
In the case $\Psi(x)=0$, it is easy to verify that $g_L(y)=\nabla\!f(y)$ 
for any $L>0$, and $1/L$ can be considered as the step-size from 
$y$ to $T_L(y)$ along the direction $-g_L(y)$.
The following property of composite gradient mapping was shown 
in \cite[Theorem~2]{Nesterov07composite}:
\begin{lemma}\label{lem:cgm-decrement}
For any $L>0$,
\[
\psi_L (y;T_L(y) ) \leq \phi(y) - \frac{1}{2L} \|g_L(y)\|_2^2 .
\]
\end{lemma}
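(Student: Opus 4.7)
The plan is to exploit the fact that $\psi_L(y;\cdot)$ is strongly convex in its second argument, with strong convexity modulus exactly $L$. This is immediate from the definition of $\psi_L$: the quadratic term $(L/2)\|x-y\|_2^2$ contributes modulus $L$, the affine term $f(y) + \nabla\!f(y)^T(x-y)$ is convex, and $\Psi$ is convex by assumption, so their sum is $L$-strongly convex in $x$.

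Next I would use the standard consequence of strong convexity at a minimizer: since $T_L(y)$ is the unique minimizer of $\psi_L(y;\cdot)$, for every $x\in\reals^n$,
\[
\psi_L(y;x) \geq \psi_L(y;T_L(y)) + \frac{L}{2}\|x - T_L(y)\|_2^2 .
\]
The only nontrivial point here is justifying the strong-convexity inequality at a minimizer when $\Psi$ is merely closed and convex (nonsmooth); this follows by writing the first-order optimality condition $0 \in \nabla\!f(y) + L(T_L(y)-y) + \partial\Psi(T_L(y))$ and combining with the subgradient inequality for $\Psi$, or equivalently by noting that strongly convex functions majorize the quadratic offset of their minimum value.

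I would then specialize $x=y$. On the left-hand side we get $\psi_L(y;y) = f(y) + \Psi(y) = \phi(y)$. On the right-hand side the distance term becomes
\[
\frac{L}{2}\|y - T_L(y)\|_2^2 = \frac{L}{2}\cdot\frac{1}{L^2}\|g_L(y)\|_2^2 = \frac{1}{2L}\|g_L(y)\|_2^2,
\]
using the definition $g_L(y) = L(y-T_L(y))$. Rearranging yields exactly
\[
\psi_L(y;T_L(y)) \leq \phi(y) - \frac{1}{2L}\|g_L(y)\|_2^2,
\]
which is the claim.

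The only real obstacle is the $L$-strong convexity step in the nonsmooth setting, since one cannot just "complete the square" directly. I would handle it cleanly via the optimality condition: there exists $\xi\in\partial\Psi(T_L(y))$ with $\nabla\!f(y) + L(T_L(y)-y) + \xi = 0$, and then convexity of $\Psi$ gives $\Psi(x)\geq\Psi(T_L(y)) + \xi^T(x-T_L(y))$; substituting this into $\psi_L(y;x)$ and completing the square in the remaining quadratic-plus-linear expression in $x-T_L(y)$ produces the desired inequality with no ambiguity. Everything else is just bookkeeping.
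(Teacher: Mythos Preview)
Your proof is correct. The paper itself does not give a proof of this lemma at all; it merely cites it as \cite[Theorem~2]{Nesterov07composite}. Your argument via $L$-strong convexity of $\psi_L(y;\cdot)$, specialized at $x=y$, is exactly the standard derivation of this bound, and your handling of the nonsmooth term $\Psi$ through the first-order optimality condition at $T_L(y)$ is the right way to make it rigorous.
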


The function~$f$ has Lipschitz continuous gradient if there exists
a constant $L_f$ such that
\[
\left\| \nabla\! f(x) - \nabla\!f(y) \right\|_2 \leq L_f \|x-y\|_2,
\quad \forall\, x,y\in\reals^n .
\]
A direct consequence of having Lipschitz continuous gradient is 
the following inequality (see, e.g., \cite[Theorem~2.1.5]{Nesterov04book}):
\begin{equation}\label{eqn:lipschitz-ub}
f(y) \leq f(x) + \langle \nabla\!f(x), y-x \rangle + \frac{L_f}{2}\|y-x\|_2^2,
\quad \forall\, x,y\in\reals^n .
\end{equation}
For such functions, we can measure how close $T_L(y)$ is from satisfying the 
optimality condition by using the norm of the composite gradient mapping at~$y$.
\begin{lemma}\label{lem:kkt-grad-mapping}
If~$f$ has Lipschitz continuous gradients with Lipschitz constant~$L_f$, then
\[
\omega(T_L(y))
\leq \left(1+\frac{S_L(y)}{L}\right) \| g_L(y) \|_2
\leq \left(1+\frac{L_f}{L} \right) \|g_L(y)\|_2
\]
where $S_L(y)$ is a local Lipschitz constant defined as
\[
S_L(y) = \frac{ \|\nabla\!f(T_L(y)) - \nabla\! f(y) \|_2}{\|T_L(y)-y\|_2} .
\]
\end{lemma}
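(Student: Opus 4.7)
The plan is to exploit the optimality condition for the subproblem defining $T_L(y)$ to produce an explicit candidate subgradient $\xi \in \partial\Psi(T_L(y))$, then bound the objective in the definition of $\omega$ by evaluating at this particular $\xi$ and controlling the resulting residual.

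First I would write down the first-order condition for the minimization in~(\ref{eqn:prox-grad}). Since $\psi_L(y;\cdot)$ is the sum of the smooth part $f(y)+\nabla\!f(y)^T(x-y)+(L/2)\|x-y\|_2^2$ and the convex function $\Psi$, optimality of $T_L(y)$ gives some $\xi\in\partial\Psi(T_L(y))$ satisfying
\[
\nabla\!f(y) + L\bigl(T_L(y)-y\bigr) + \xi = 0.
\]
Recognizing $L(y-T_L(y))=g_L(y)$, this rearranges to $\xi = g_L(y) - \nabla\!f(y)$, so that this particular $\xi$ is an explicit element of $\partial\Psi(T_L(y))$ whose distance from $-\nabla\!f(T_L(y))$ we can bound directly.

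Next, since $\omega(T_L(y))$ is the minimum over $\partial\Psi(T_L(y))$, plugging in this specific $\xi$ yields an upper bound:
\[
\omega(T_L(y)) \;\le\; \bigl\|\nabla\!f(T_L(y))+\xi\bigr\|_\infty
\;=\; \bigl\|\nabla\!f(T_L(y))-\nabla\!f(y)+g_L(y)\bigr\|_\infty.
\]
I would then pass to the Euclidean norm using $\|\cdot\|_\infty\le\|\cdot\|_2$ and apply the triangle inequality to separate the two terms. The first term is bounded using the definition of the local Lipschitz constant: $\|\nabla\!f(T_L(y))-\nabla\!f(y)\|_2 = S_L(y)\,\|T_L(y)-y\|_2$, and the identity $\|T_L(y)-y\|_2 = \|g_L(y)\|_2/L$ converts this into $(S_L(y)/L)\|g_L(y)\|_2$. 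Combining with $\|g_L(y)\|_2$ from the second term yields the first inequality.

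The second inequality is immediate from $S_L(y)\le L_f$, which follows directly from the assumed global Lipschitz continuity of $\nabla\!f$. I do not anticipate any serious obstacle here: the result is essentially a bookkeeping consequence of the optimality condition combined with gradient Lipschitz continuity, and the only mild subtlety is the mismatch between the $\ell_\infty$ norm in the definition of $\omega$ and the $\ell_2$ norm used for gradient estimates, which is handled trivially by $\|\cdot\|_\infty\le\|\cdot\|_2$.
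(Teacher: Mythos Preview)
Your proof is correct and in fact more elementary than the paper's. The paper argues via directional derivatives: it invokes Corollary~1 of \cite{Nesterov07composite}, which gives $D\phi(T_L(y))[u] \geq -(1+S_L(y)/L)\|g_L(y)\|_2$ for every unit vector~$u$, together with the identity $\min_{\xi\in\partial\Psi(x)}\|\nabla\!f(x)+\xi\|_2 = -\min_{\|u\|_2=1} D\phi(x)[u]$, and then uses $\omega(\cdot)\le$ this $\ell_2$ minimum. Your approach bypasses both cited facts by extracting directly the explicit subgradient $\xi=g_L(y)-\nabla\!f(y)$ from the first-order optimality condition for~(\ref{eqn:prox-grad}) and bounding $\|\nabla\!f(T_L(y))+\xi\|_\infty$ by hand. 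This is self-contained and shorter; the paper's route, by contrast, yields the stronger intermediate statement that \emph{every} directional derivative of~$\phi$ at $T_L(y)$ is bounded below, which is of interest in Nesterov's general framework but is not needed for the lemma as stated.
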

\begin{proof}
Let $D\phi(x)[u]$ denote the directional derivative of~$\phi$ at~$x$
along the direction~$u$, i.e, 
\[
D\phi(x)[u] = \lim_{\alpha\downarrow 0} \frac{1}{\alpha}
\bigl( \phi(x + \alpha u) - \phi(x) \bigr).
\]
Corollary~1 in \cite{Nesterov07composite} states that for 
any $u\in\reals^n$ with $\|u\|_2=1$, the following inequality holds:
\[
D\phi(T_L(y))[u]  \geq - \left(1+\frac{S_L(y)}{L}\right) \|g_L(y)\|_2.
\]
In addition, it is shown in \cite{Nesterov07composite}
that for any $x\in\reals^n$,
\[
\min_{\xi\in\partial\Psi(x)} \|\nabla\!f(x)+\xi\|_2
= - \min_{\|u\|_2=1} D\phi(x)[u] .
\]
(See \cite[Section~2]{Nesterov07composite}.)
Therefore, we have
\[
\omega(T_L(y)) 
\leq \min_{\xi\in\partial\Psi(T_L(y))} \|\nabla\!f(T_L(y))+\xi\|_2
\leq \left(1+\frac{S_L(y)}{L}\right) \| g_L(y) \|_2.
\]
The last desired inequality follows from the fact $S_L(y) \leq L_f$.
\end{proof}

In this paper, we use the following notations to simplify presentation:
\begin{eqnarray*}
f(x) &=& \frac{1}{2}\|A x - b\|_2^2\\
\phi_\lambda(x) &=& f(x) + \lambda \|x\|_1.
\end{eqnarray*}
Correspondingly, we add the subscript $\lambda$ in specifying the 
composite gradient mapping:
\begin{eqnarray*}
\psi_{\lambda,L}(y; x) 
&=& f(y) + \nabla\! f(y)^T (x-y) 
    + \frac{L}{2}\|x-y\|_2^2 + \lambda \|x\|_1 \\
T_{\lambda,L}(y) &=& \argmin_x ~\psi_{\lambda,L}(y;x) \\
g_{\lambda,L}(y) &=& L \bigl( y-T_{\lambda,L}(y) \bigr) \\
\omega_\lambda(x) &=& \min_{\xi\in\partial\|x\|_1} 
                      \|\nabla\!f(x)+\lambda\xi\|_\infty.
\end{eqnarray*}
We call the process of computing $T_L(y)$ a proximal gradient step.
For the $\ell_1$-LS problem, 
$T_{\lambda,L}(x)$ has the closed-form solution given in~(\ref{eqn:IST}).
Given the gradient $\nabla\!f(x)$, the optimality residue
$\omega_\lambda(x)$ can be easily computed with $O(n)$ flops.

\subsection{Nesterov's gradient method with adaptive line-search}
\label{sec:prox-grad}

\begin{algorithm}[t]
\DontPrintSemicolon
\SetKwInOut{Input}{input}
\Input{$\lambda>0$, $x\in\reals^n$, $L>0$}
\textbf{parameter:} $\gammaInc>1$\;
  \Repeat{$\phi_\lambda(x^+) <= \psi_{\lambda,L}(x;x^+)$}{
    $x^+ \gets T_{\lambda,L}(x)$\;\vspace{0.2ex}
    \lIf{$\phi_\lambda(x^+) > \psi_{\lambda,L}(x;x^+)$}{
         $L\gets L\gammaInc$}
    \vspace{0.5ex}
  }
  $M \gets L$\;
\Return{$\{x^+, M\}$}\;
\caption{$\{x^+,M\} \gets \texttt{LineSearch} (\lambda, x, L)$}
\label{alg:line-search}
\end{algorithm}

\begin{algorithm}[t]
\DontPrintSemicolon
\SetKwInOut{Input}{input}
\Input{$\lambda>0$, $\hat{\epsilon}>0$, $x^{(0)}\in\reals^n$, 
  $L_0\geq \Lmin$}
\textbf{parameters:} $\Lmin>0$, $\gammaDec\geq 1$\;
\Repeat(for $k=0,1,2,\ldots$){$\omega_\lambda(x^{(k+1)}) \leq \hat\epsilon$}{
  $\{x^{(k+1)},M_k\}\gets \texttt{LineSearch} 
     (\lambda, x^{(k)}, L_k )$\;
  $L_{k+1} \gets \max\{\Lmin, M_k/\gammaDec\}$\;
}
$\hat x \gets x^{(k+1)}$\;
$\hat M \gets M_k$\;
\Return{$\{\hat x, \hat M\}$}\;
\caption{$\{\hat x, \hat M\} \gets \texttt{ProxGrad} 
  (\lambda, \hat{\epsilon}, x^{(0)}, L_0 )$}
\label{alg:prox-grad}
\end{algorithm}

With the machinery of composite gradient mapping, Nesterov developed 
several variants of proximal gradient methods in \cite{Nesterov07composite}.
We use the non-accelerated primal-gradient version described in
Algorithms~\ref{alg:line-search} and~\ref{alg:prox-grad}, which correspond
to~(3.1) and~(3.2) in \cite{Nesterov07composite}, respectively.
To use this algorithm, we need to first choose an initial optimistic estimate
$\Lmin$ for the Lipschitz constant $L_f$:
\[
0 < \Lmin \leq L_f,
\]
and two adjustment parameters $\gammaDec\geq 1$ and $\gammaInc>1$. 
A key feature of this algorithm is the adaptive line search: 
it always tries to use a smaller Lipschitz constant first at each iteration.

Each iteration of the proximal gradient method generates the next iterate
in the form of
\[
x^{(k+1)} = T_{\lambda,M_k}(x^{(k)}) ,
\]
where~$M_k$ is chosen by the line search procedure 
in Algorithm~(\ref{alg:line-search}).
The line search procedure starts with an estimated Lipschitz constant $L_k$,
and increases its value by the factor $\gammaInc$ 
until the stopping criteria is satisfied.
The stopping criteria for line search ensures 
\begin{eqnarray}
\phi_\lambda ( x^{(k+1)} ) 
&\leq& \psi_{\lambda,M_k} \left( x^{(k)},x^{(k+1)} \right) 
~=~ \psi_{\lambda,M_k} \left( x^{(k)},T_{\lambda,M_k}(x^{(k)}) \right) 
    \nonumber\\
&\leq& \phi_\lambda(x^{(k)}) 
- \frac{1}{2M_k} \bigl\|g_{\lambda,M_k}(x^{(k)}) \bigr\|_2^2, 
\label{eqn:monotone-decrease}
\end{eqnarray}
where the last inequality follows from Lemma~\ref{lem:cgm-decrement}.
Therefore, we have the objective value $\phi_\lambda(x^{(k)})$ decrease 
monotonically with~$k$, unless the gradient mapping 
$g_{\lambda,M_k}(x^{(k)})=0$.
In the latter case, according to Lemma~\ref{lem:kkt-grad-mapping},
$x^{(k+1)}$ is an optimal solution.

The only difference between Algorithm~\ref{alg:prox-grad} and Nesterov's
gradient method \cite[(3.2)]{Nesterov07composite} is that 
Algorithm~\ref{alg:prox-grad} has an explicit stopping criterion.
This stopping criterion is based on the optimality residue 
$\omega_\lambda(x^{(k+1)})$ being small.
For the~$\ell_1$-LS problem, it can be computed with additional $O(n)$ flops 
given the gradient $\nabla\! f(x)$.
For other problems, depending on the form of~$\Psi$, 
this residue may be hard to compute.
But we can always use the alternative stopping criterion
\[
\bigl\| g_{\lambda,M_k}(x^{(k)}) \bigr\|_2 \leq \hat{\epsilon}.
\]
According to Lemma~\ref{lem:kkt-grad-mapping}, these two measures may differ
by a factor $(1+S_{M_k}(x^{(k+1)})/M_k)$. 
So the precision $\hat{\epsilon}$ may need to be reduced by a similar factor.

Since~$f$ has Lipschitz constant~$L_f$, the inequality~(\ref{eqn:lipschitz-ub})
implies that the line search procedure is guaranteed to terminate 
if $L\geq L_f$. 
Therefore, we have
\begin{equation}\label{eqn:lipsch-bounds}
\Lmin \leq L_k \leq M_k < \gammaInc L_f .
\end{equation}
Although there is no explicit bound on the number of repetitions in the 
line search procedure, Nesterov showed that the total number of 
line searches cannot be too big. 
More specifically, let $N_k$ be the number of operations
$x^+\gets T_{\lambda,L}(x)$ after~$k$ iterations 
in Algorithm~\ref{alg:prox-grad}.
Lemma~3 in \cite{Nesterov07composite} showed that 
\[
N_k ~\leq~ \left(1+\frac{\ln\gammaDec}{\ln\gammaInc}\right)
(k+1) + \frac{1}{\ln\gammaInc} \max\left\{
\ln\frac{\gammaInc L_f}{\gammaDec \Lmin},0\right\}.
\]
For example, if we choose $\gammaInc=\gammaDec=2$, then
\begin{equation}\label{eqn:line-search-bound}
N_k \leq 2(k+1) + \log_2\frac{L_f}{\Lmin}.
\end{equation}

Nesterov established the following iteration complexities of
Algorithm~\ref{alg:prox-grad} for finding an $\epsilon$-optimal solution
of the problem~(\ref{eqn:composite-min}):
\begin{itemize}
\item
If $\phi_\lambda$ is convex but not strongly convex, then the convergence
is sublinear, with an iteration complexity $O(1/\epsilon)$ 
\cite[Theorem~4]{Nesterov07composite};
\item
If $\phi_\lambda$ is strongly convex, then the convergence is geometric, 
with an iteration complexity $O(\log(1/\epsilon))$ 
\cite[Theorem~5]{Nesterov07composite}.
\end{itemize}
A nice property of this algorithm is that we do not need to know a priori
if the objective function is strongly convex or not. 
It will automatically exploit the strong convexity whenever it holds.
The algorithm is the same for both cases.

For our interested case $m<n$, the objective function in 
Problem~(\ref{eqn:l1-LS}) is not strongly convex.
Therefore, if we directly use Algorithm~\ref{alg:prox-grad} to solve
this problem, we can only get the $O(1/\epsilon)$ iteration complexity 
(even though fast local linear convergence was observed in 
\cite{Nesterov07composite} when the solution is sparse).
Nevertheless, as explained in the introduction, we can use a homotopy 
continuation strategy to enforce that all iterates along the solution path 
are sufficiently sparse.
Under a RIP-like assumption on~$A$, this implies that the objective function 
is effectively strongly convex along the homotopy path, 
and hence global geometric rate can be established using Nesterov's analysis.
Next we explain conditions that characterize restricted strong convexity
for sparse vectors.

\subsection{Restricted eigenvalue conditions}
\label{sec:restr-eig-cond}

We first define some standard notations for sparse recovery.
For a vector $x\in\reals^n$, let
\[
\supp(x) = \{j: x_j \neq 0\},  \qquad
\|x\|_0 = |\supp(x)| .
\]
Throughout the paper, we denote $\supp(\bar{x})$ by $\bar{S}$, 
and use $\bar{S}^c$ for its complement.
We use the notations $x_{\bar{S}}$ and $x_{\bar{S}^c}$ to denote the 
restrictions of a vector $x$ to the coordinates indexed by $\bar{S}$
and $\bar{S}^c$, respectively.

Various conditions for sparse recovery have appeared in the literature.
The most well-known of such conditions is the 
\emph{restricted isometry property} (RIP) introduced in \cite{CandesTao05}. 
In this paper, we analyze the numerical solution of the $\ell_1$-LS problem
under a slight generalization, which we refer to as 
\emph{restricted eigenvalue condition}.
\begin{definition} \label{def:sparse-eigenvalue}
Given an integer $s>0$, we say that $A$ satisfies the restricted eigenvalue 
condition at sparsity level $s$ if there exists positive constants 
$\rho_-(A,s)$ and $\rho_+(A,s)$ such that
\begin{eqnarray*}
\rho_+(A,s) &=& \sup \left\{ \frac{x^T A^T A x}{x^T x} 
    : x \neq 0, ~\|x \|_0 \leq s \right\} , \\
\rho_-(A,s) &=& \inf \left\{ \frac{x^T A^T A x}{x^T x} 
    : x \neq 0, ~\|x \|_0 \leq s \right\} .
\end{eqnarray*}
\end{definition}

Note that a matrix $A$ satisfies the original definition of restricted isometry property 
with RIP constant $\nu$ at sparsity level $s$ 
if and only if $\rho_+(A,s) \leq 1+\nu$ and $\rho_-(A,s) \geq 1-\nu$.
More generally, the strong convexity of the objective function 
in~(\ref{eqn:l1-LS}), namely~$\phi_\lambda(x)$,
is equivalent to $\rho_-(A,n)>0$.
However, since we are interested in the situation of $m < n$, 
which implies that $\rho_-(A,n)=0$, we know that~$\phi_\lambda$
is not strongly convex. 
Nevertheless, for $s<m$, it is still possible that the condition 
$\rho_-(A,s)>0$ holds. 
This means that if both~$x$ and~$y$ are sparse vectors,
then~$\phi_\lambda$ is strongly convex along the line segment that 
connects~$x$ and~$y$. 
Moreover, the inequality that characterize the smoothness of the function,
namely~(\ref{eqn:lipschitz-ub}), could use a much smaller restricted
Lipschitz constant instead of the global constant~$L_f=\rho_+(A, n)$.
More precisely, we have the following lemma.

\begin{lemma}
Let $f(x)=(1/2)\|A x-b\|_2^2$.
Suppose~$x$ and~$y$ are two sparse vectors such that
\[
| \supp(x) \cup \supp(y) | \leq s
\]
for some integer $s<m$. Then the following two inequalities hold:
\begin{eqnarray}
f(y) &\leq& f(x) + \langle \nabla\!f(x), y-x \rangle + 
\frac{\rho_+(A,s)}{2} \|y-x\|_2^2, 
\label{eqn:restr-smoothness}  \\
f(y) &\geq& f(x) + \langle \nabla\!f(x), y-x \rangle + 
\frac{\rho_-(A,s)}{2} \|y-x\|_2^2.
\label{eqn:restr-strong-convex}
\end{eqnarray}
\end{lemma}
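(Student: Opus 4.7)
The plan is to use the fact that $f(x) = (1/2)\|Ax-b\|_2^2$ is a quadratic function, so its second-order Taylor expansion is exact. First I would compute $\nabla f(x) = A^T(Ax-b)$ and write down the exact identity
\[
f(y) = f(x) + \langle \nabla\!f(x), y-x \rangle + \frac{1}{2}(y-x)^T A^T A (y-x),
\]
which holds for all $x,y\in\reals^n$ without any sparsity assumption.

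Next I would set $v = y - x$ and observe that $\supp(v) \subseteq \supp(x) \cup \supp(y)$, so by hypothesis $\|v\|_0 \leq s$. This is the one place where the sparsity assumption is used, and it is the key step that lets us invoke the restricted eigenvalue condition rather than the global one. Plugging $v$ into Definition~\ref{def:sparse-eigenvalue} gives the sandwich
\[
\rho_-(A,s)\|v\|_2^2 \leq v^T A^T A v \leq \rho_+(A,s)\|v\|_2^2.
\]

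Substituting these two bounds on the quadratic term into the exact Taylor identity yields~(\ref{eqn:restr-smoothness}) and~(\ref{eqn:restr-strong-convex}) respectively.

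There is no real obstacle here; the only point of care is to verify that $v=y-x$ really does have $\|v\|_0\leq s$, which follows immediately from the trivial set-theoretic fact $\supp(y-x)\subseteq \supp(x)\cup\supp(y)$. The whole argument is short because the quadratic nature of $f$ makes the second-order expansion exact, so the restricted eigenvalue bounds transfer directly and tightly into restricted smoothness and restricted strong convexity constants.
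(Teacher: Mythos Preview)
Your proposal is correct and matches the paper's proof essentially line for line: the paper also writes the exact identity $f(y)-f(x)-\langle\nabla f(x),y-x\rangle=\frac{1}{2}\|A(y-x)\|_2^2$, notes that $\|y-x\|_0\leq s$ follows from $|\supp(x)\cup\supp(y)|\leq s$, and then applies Definition~\ref{def:sparse-eigenvalue} to sandwich $\|A(y-x)\|_2^2$ between $\rho_-(A,s)\|y-x\|_2^2$ and $\rho_+(A,s)\|y-x\|_2^2$.
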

\begin{proof}
For any $x,y\in\reals^n$, it is straightforward to verify that
if $f(x)=(1/2)\|A x-b\|_2^2$, then
\[
f(y) - f(x) - \langle \nabla\!f(x), y-x \rangle ~=~ \frac{1}{2}\|A(y-x)\|_2^2.
\]
Since the assumption $| \supp(x) \cup \supp(y) | \leq s$ 
implies $\|y-x\|_0 \leq s$, we use the definition of 
restricted eigenvalues to conclude
\[
\rho_-(A,s) \|y-x\|_2^2 ~\leq~ \|A(y-x)\|_2^2 ~\leq~ \rho_+(A, s) \|y-x\|_2^2.
\]
These lead to the two desired inequalities.
\end{proof}

The inequality~(\ref{eqn:restr-smoothness}) represents
\emph{restricted smoothness}, and~(\ref{eqn:restr-strong-convex}) represents 
\emph{restricted strong convexity}.
A key feature of our PGH method is that sparsity along the whole solution path
can be enforced. Therefore the objective function in~(\ref{eqn:l1-LS}) 
becomes strongly convex along the solution path if the sparse eigenvalues in 
Definition~\ref{def:sparse-eigenvalue} are well behaved 
(i.e., they grow slowly when~$s$ is increased).
In such a situation, the PGH method exhibits geometric convergence along
the homotopy path, and the convergence rate depends on a
\emph{restricted condition number}, defined as
\begin{equation}\label{eqn:restr-cond}
\kappa(A,s) = \frac{\rho_+(A,s)}{\rho_-(A,s)}. 
\end{equation}
In particular, if the matrix $A$ has RIP constant $\nu$ at sparsity level $s$, then
$\kappa(A,s) \leq (1+\nu)/(1-\nu)$.

\section{A proximal-gradient homotopy method}
\label{sec:PGH-method}

\begin{algorithm}[t]
\DontPrintSemicolon
\SetKwInOut{Input}{input}
\Input{$A\in\reals^{m\times n}$, $b\in\reals^n$, $\lambda_\mathrm{tgt}>0$, 
       $\epsilon>0$, $\Lmin>0 $}
\textbf{parameters:} $\eta\in(0,1)$, $\delta\in(0,1)$\;
\textbf{initialize:} $\lambda_0 \gets \|A^T b\|_\infty$, 
   ~$\hat x^{(0)} \gets 0$, ~$\hat{M}_0 \gets \Lmin$\;
$N \gets \left\lfloor\ln\!\left(\lambda_0/\lambda_\mathrm{tgt}\right) / 
   \ln(1/\eta)\right\rfloor$\;
\For{$K=0,1,2,\ldots,N-1$}{
  $\lambda_{K+1} \gets \eta \lambda_K$\;
  $\hat{\epsilon}_{K+1} \gets \delta\lambda_{K+1}$\;
  $\{\hat x^{(K+1)},\hat M_{K+1}\}\gets \texttt{ProxGrad} 
   \bigl(\lambda_{K+1}, \hat{\epsilon}_{K+1}, \hat x^{(K)}, \hat M_K \bigr)$\;
}
$\{\hat{x}^\mathrm{(tgt)},\hat M_\mathrm{tgt}\}\gets \texttt{ProxGrad} 
  \bigl( \lambda_\mathrm{tgt}, \epsilon, \hat x^{(N)}, \hat M_N \bigr)$\;
\Return{$\hat{x}^\mathrm{(tgt)}$}\;
\caption{$\hat{x}^\mathrm{(tgt)} \gets \texttt{Homotopy} 
          (A, b, \lambda_\mathrm{tgt}, \epsilon, \Lmin)$}
\label{alg:homotopy}
\end{algorithm}

The key idea of the proximal-gradient homotopy (PGH) method
is to solve~(\ref{eqn:l1-LS}) with a large regularization
parameter $\lambda_0$ first, and then gradually decreases $\lambda$ 
until the target regularization is reached. 
For each fixed $\lambda$, we employ Nesterov's proximal-gradient method 
described in Algorithms~\ref{alg:line-search} and~\ref{alg:prox-grad},
to solve problem~(\ref{eqn:l1-LS}) up to an adequate precision.
Then we use this approximate solution to warm start the PG method for 
the next value of $\lambda$.

Our proposed PGH method is listed as Algorithm~\ref{alg:homotopy}.
To make the presentation more clear, 
we use $\lambda_\mathrm{tgt}$ to denote the target regularization parameter.
The method starts with
\[
\lambda_0 = \|A^T b\|_\infty,
\]
since this is the smallest value for~$\lambda$ such that the $\ell_1$-LS
problem has the trivial solution~$0$ (by examining the optimality condition).
Our method has two parameters $\eta\in(0,1)$ and $\delta\in(0,1)$.
They control the algorithm as follows:
\begin{itemize}
\item The sequence of values for the regularization parameter is determined as
$\lambda_K=\eta^K\lambda_0$ for $K=1,2,\ldots$,
until the target value $\lambda_\mathrm{tgt}$ is reached.
\item For each~$\lambda_K$ except $\lambda_\mathrm{tgt}$,
we solve problem~(\ref{eqn:l1-LS}) with a proportional 
precision~$\delta\lambda_K$.
For the last stage with $\lambda_\mathrm{tgt}$, 
we solve to the absolute precision $\epsilon$. 
\end{itemize}

As discussed in the introduction, sparse recovery by solving the 
$\ell_1$-LS problem requires two types of conditions:
the regularization parameter~$\lambda$ is relatively 
large compared with the noise level,
and the matrix~$A$ satisfies certain RIP or restricted eigenvalue condition.
It turns out that such conditions are also sufficient for fast convergence of
our PGH method.
More precisely, we have the following assumption:
\begin{assumption}\label{asmp:mixed-re}
Suppose $b=A\bar{x}+z$. 
Let $\bar{S}=\supp(\bar{x})$ and $\bar{s}=|\bar{S}|$.
There exist $\gamma>0$ and $\delta'\in(0,1)$ such that $\gamma > (1+\delta')/(1-\delta')$ and
\begin{equation}\label{eqn:big-lambda}
\lambda_\mathrm{tgt} ~\geq~ 
\max\left\{4, ~\frac{\gamma+1}{(1-\delta')\gamma-(1+\delta')} \right\} 
\|A^T z\|_\infty . 
\end{equation}
Moreover, there exists an integer $\tilde{s}$ such that 
$\rho_-(A, \bar{s}+2\tilde{s})>0$ and
\begin{equation}\label{eqn:mixed-re}
\tilde{s} ~>~ 
\frac{ 16 \bigl( \gammaInc 
 \rho_+(A, \bar{s}+2\tilde{s})+2\rho_+(A, \tilde{s})\bigr)}
{\rho_-(A, \bar{s}+\tilde{s})} (1+\gamma)\bar{s}.
\end{equation}
We also assume that $\Lmin \leq \gammaInc  \rho_+(A, \bar{s}+2\tilde{s})$.
\end{assumption}

According to \cite{ZhangHuang08}, the above assumption implies that the 
solution $x^\star(\lambda)$ of (\ref{eqn:l1-LS}) is sparse whenever 
$\lambda \geq \lambda_\mathrm{tgt}$; more specifically,
$\|x^\star(\lambda)_{\bar{S}^c}\|_0 \leq \tilde{s}$ 
(here $\bar{S}^c$ denotes the complement of the support set~$\bar{S}$).
In this paper, we will show that by choosing the parameters~$\eta$ 
and~$\delta$ in Algorithm~\ref{alg:homotopy} appropriately, 
these conditions also imply that all iterates along the solution path
are sparse. 
Our proof employs a similar argument as that of \cite{ZhangHuang08}. 
Before stating the main convergence results, we make some further remarks
on Assumption~\ref{asmp:mixed-re}.
\begin{itemize}
\item
The condition~(\ref{eqn:big-lambda}) states that the~$\lambda$ must be
sufficiently large to dominate the noise.
Such a condition is adequate for sparse recovery applications because 
recovery performance given in (\ref{eq:opt-recovery}) achieves optimal 
error bound under stochastic noise model by picking $\lambda$ of the order 
$\|A^T z\|_\infty$
\cite{CandesTao07,ZhangHuang08,ZhangT09,BickelRT09,Koltchinskii09,
vandeGeerB09,Wainwright09}. 
Moreover, it is also necessary because when $\lambda$ is smaller than
the noise level, the solution $x^\star(\lambda)$ will not be sparse anymore, 
which defeats the practical purpose of using $\ell_1$ regularization.
\item
The existence of $\tilde{s}$ satisfying the conditions~(\ref{eqn:mixed-re})
is necessary and standard in sparse recovery analysis.
This is closely related to the RIP condition of \cite{CandesTao05}
which assumes that there exist some $s>0$, and $\nu \in (0,1)$ such that
$\kappa(A,s) < (1+\nu)/(1 - \nu)$.
In fact, if RIP is satisfied with $\nu=0.2$ at $s=193(1+\gamma)\bar{s}$, 
then we may take $\gammaInc=2$ and $\tilde{s}=96(1+\gamma)\bar{s}$
so that the condition~(\ref{eqn:mixed-re}) is satisfied.
To see this, let $s=\bar{s}+2\tilde{s}$ and note that 
\[
\frac{1+\nu}{1-\nu} > \kappa(A,\bar{s}+2\tilde{s})
\geq
\frac{\rho_+(A, \bar{s}+2\tilde{s})}{\rho_-(A, \bar{s}+\tilde{s})} .
\]
Therefore we have
\[
\tilde{s} = 96 (1+\gamma)\bar{s} 
= 64 \frac{1+\nu}{1-\nu} (1+\gamma)\bar{s}
> 16 \frac{2 \rho_+(A, \bar{s}+2\tilde{s}) + 2\rho_+(A,\tilde{s})}
          {\rho_-(A, \bar{s}+\tilde{s})} (1+\gamma)\bar{s} .
\]
Although for practical purpose these constants are rather large, 
it is worth mentioning that our analysis focuses on the high level message,
without paying special attention to optimizing the constants.
\item
If $\Lmin >\gammaInc  \rho_+(A, \bar{s}+2\tilde{s})$,
then we may simply replace
$\gammaInc  \rho_+(A, \bar{s}+2\tilde{s})$
by $\Lmin$ in the assumption, and all theorem statements
hold with $\gammaInc  \rho_+(A, \bar{s}+2\tilde{s})$ replaced
by $\Lmin$.
Nevertheless in practice, it is natural to
simply pick 
\[
\Lmin=\rho_+(A,1)=\max_{i\in\{1,\ldots,n\}} \|A_i\|_2^2,
\]
where $A_i$ is the $i$-th column of~$A$.
It automatically satisfies the condition 
$\Lmin \leq \rho_+(A,\bar{s}+2\tilde{s})$.

\end{itemize}

Our first result below concerns the local geometric convergence of
Algorithm~\ref{alg:prox-grad}.
Basically, if the starting point $x^{(0)}$ is sparse
and the optimality condition is satisfied with adequate precision,
then all iterates along the solution path are sparse,
and Algorithm~\ref{alg:prox-grad} has geometric convergence.
To simplify the presentation, we use a single symbol~$\kappa$ to denote the 
restricted condition number
\[
\kappa = \kappa(A,\bar{s}+2\tilde{s}) 
= \frac{\rho_+(A,\bar{s}+2\tilde{s})}{\rho_-(A,\bar{s}+2\tilde{s})}.
\]

\begin{theorem}\label{thm:geometric-rate}
Suppose Assumption~\ref{asmp:mixed-re} holds.
If the initial point $x^{(0)}$ in Algorithm~\ref{alg:prox-grad} satisfies
\begin{equation}\label{eqn:x0-close}
\big\|x^{(0)}_{\bar{S}^c}\big\|_0 \leq \tilde{s} , \qquad
\omega_\lambda(x^{(0)})\leq \delta' \lambda,
\end{equation}
then for all $k\geq 0$, we have
\[
\big\|x^{(k)}_{\bar{S}^c}\big\|_0 \leq \tilde{s} , \qquad
\phi_\lambda(x^{(k)}) - \phi_\lambda^\star 
~\leq~ \left(1-\frac{1}{4\gammaInc\kappa} \right)^k 
\left(\phi_\lambda(x^{(0)})-\phi_\lambda^\star\right),
\] 
where 
$\phi_\lambda^\star = \phi_\lambda(x^\star(\lambda)) = \min_x \phi_\lambda(x)$.
\end{theorem}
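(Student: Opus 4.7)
The plan is to prove the two claims simultaneously by induction on $k$, because the geometric-rate bound needs sparsity of both $x^{(k)}$ and $x^{(k+1)}$ (so that restricted strong convexity applies), while sparsity preservation at step $k{+}1$ is easiest to get from a quantitative bound on the distance of $x^{(k+1)}$ to the (sparse) optimum $x^\star(\lambda)$. The base case $k=0$ is immediate from~(\ref{eqn:x0-close}). For the inductive step I would proceed as follows.

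First, I would establish that under Assumption~\ref{asmp:mixed-re}, the optimizer $x^\star(\lambda)$ itself satisfies $\|x^\star(\lambda)_{\bar{S}^c}\|_0 \leq \tilde{s}$ (this is the \cite{ZhangHuang08}-type fact cited right after the assumption), so $|\supp(x^{(k)}) \cup \supp(x^\star)| \leq \bar{s} + 2\tilde{s}$ whenever $x^{(k)}$ is $\tilde{s}$-sparse off-support. Then I would show sparsity preservation for $x^{(k+1)} = T_{\lambda,M_k}(x^{(k)})$ using the closed-form soft-thresholding formula: any index $j \in \bar{S}^c$ with $x^{(k+1)}_j \neq 0$ must satisfy $|M_k(x^{(k)}_j - x^{(k+1)}_j) + \nabla\!f(x^{(k)})_j| = \lambda$. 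Splitting $\nabla\!f(x^{(k)}) = A^T A(x^{(k)}-\bar x) - A^T z$, using the coarse bound $\|A^T z\|_\infty \le \lambda/4$ from~(\ref{eqn:big-lambda}), and applying the restricted eigenvalue bound $\rho_+(A,\tilde s)$ to the submatrix indexed by those ``extra'' $j$'s, one gets a lower bound on the number of such $j$'s times $\lambda^2$ in terms of $\rho_+(A,\bar s+2\tilde s)\|x^{(k)}-x^\star\|_2^2 + \rho_+(A,\tilde s)\|x^{(k)}-x^{(k+1)}\|_2^2$. These two squared distances are in turn bounded by the optimality gap $\phi_\lambda(x^{(k)})-\phi_\lambda^\star$ via restricted strong convexity $\rho_-(A,\bar s+\tilde s)$ and via Lemma~\ref{lem:cgm-decrement}. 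Dividing through by $\lambda^2$ and comparing with $(1+\gamma)\bar s \lambda^2$ (the optimality gap of the starting point up to constants) gives the count $\leq \tilde{s}$ precisely when~(\ref{eqn:mixed-re}) holds; this is the quantitative content of Assumption~\ref{asmp:mixed-re}, and it is the main technical obstacle.

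Once sparsity of $x^{(k+1)}$ is in hand, the geometric rate follows from a standard Nesterov-style argument on the sparse union of supports. Lemma~\ref{lem:cgm-decrement} gives
\[
\phi_\lambda(x^{(k+1)}) \leq \phi_\lambda(x^{(k)}) - \tfrac{1}{2M_k}\|g_{\lambda,M_k}(x^{(k)})\|_2^2,
\]
and by convexity together with the restricted strong convexity inequality~(\ref{eqn:restr-strong-convex}) applied on the union $\supp(x^{(k)})\cup\supp(x^\star)$ of size $\le \bar s+2\tilde s$, one can lower-bound $\|g_{\lambda,M_k}(x^{(k)})\|_2^2$ in terms of $M_k\,\rho_-(A,\bar s+2\tilde s)\bigl(\phi_\lambda(x^{(k)})-\phi_\lambda^\star\bigr)$ (this is exactly the argument in \cite[Theorem~5]{Nesterov07composite}, but with the restricted constants playing the role of the global ones). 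Combining with the bound $M_k \leq \gammaInc\rho_+(A,\bar s+2\tilde s)$ from~(\ref{eqn:lipsch-bounds}) and the definition of $\kappa$ yields the per-step contraction
\[
\phi_\lambda(x^{(k+1)}) - \phi_\lambda^\star \leq \Bigl(1-\tfrac{1}{4\gammaInc\kappa}\Bigr)\bigl(\phi_\lambda(x^{(k)})-\phi_\lambda^\star\bigr),
\]
closing the induction.

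The hardest step is clearly the sparsity-preservation clause: it is the one place where the full strength of Assumption~\ref{asmp:mixed-re} (both the largeness of $\lambda_{\mathrm{tgt}}$ relative to $\|A^T z\|_\infty$ and the inequality~(\ref{eqn:mixed-re}) tying $\tilde s$ to the restricted eigenvalues) must be used, and it requires a careful accounting of how the $\ell_1$ ``kink'' of the soft-thresholding formula interacts with the restricted eigenvalue geometry. The rest of the argument is a routine transplant of Nesterov's strongly-convex analysis once the ambient space has been restricted to a sparse subspace on which $\phi_\lambda$ behaves like a genuinely strongly convex function.
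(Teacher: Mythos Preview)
Your overall strategy—induction on $k$ establishing sparsity and contraction together—matches the paper's, and the geometric-rate half is essentially the argument in Section~\ref{sec:geometric-rate}: the line-search stopping criterion plus convexity gives $\phi_\lambda(x^{(k+1)}) \le \min_x\{\phi_\lambda(x)+\tfrac{M_k}{2}\|x-x^{(k)}\|_2^2\}$, restriction to the segment $[x^{(k)},x^\star(\lambda)]$ and restricted strong convexity~(\ref{eqn:restr-strong-convex}) yield the $(1-\tfrac{1}{4\gammaInc\kappa})$ contraction. Two points deserve more care, though.

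First, your citation of~(\ref{eqn:lipsch-bounds}) for $M_k \leq \gammaInc\rho_+(A,\bar s+2\tilde s)$ is incorrect: (\ref{eqn:lipsch-bounds}) only gives $M_k<\gammaInc L_f$ with the \emph{global} constant $L_f=\rho_+(A,n)$. The paper needs a separate argument (end of Section~\ref{sec:sparse-path}): for every trial value $L<\gammaInc\rho_+(A,\bar s+2\tilde s)$ encountered during line search, Lemma~\ref{lem:local-sparse} guarantees $T_{\lambda,L}(x^{(k)})$ is sparse, so once $L\ge\rho_+(A,\bar s+2\tilde s)$ restricted smoothness~(\ref{eqn:restr-smoothness}) forces acceptance. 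This is precisely the step that couples the sparsity lemma to the restricted Lipschitz bound, and your sketch skips it.

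Second, the paper's sparsity preservation (Lemmas~\ref{lem:kkt-consequences}--\ref{lem:local-sparse}) anchors everything on the true signal~$\bar x$, not on~$x^\star(\lambda)$. The invariant propagated is $\phi_\lambda(x^{(k)})\le\phi_\lambda(\bar x)+\Delta$, obtained at $k=0$ from $\omega_\lambda(x^{(0)})\le\delta'\lambda$ via Lemma~\ref{lem:kkt-consequences} and preserved by monotone descent; Lemma~\ref{lem:bound-norms} then converts it into bounds on $\|x^{(k)}-\bar x\|_1$ and $\|A(x^{(k)}-\bar x)\|_2$ that feed the counting argument of Lemma~\ref{lem:local-sparse}. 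Your version splits $\nabla f$ around $\bar x$ but then bounds via $\|x^{(k)}-x^\star\|_2$, which mixes anchors, requires sparsity of $x^\star$ as input rather than output, and still needs to bound the initial gap $\phi_\lambda(x^{(0)})-\phi_\lambda^\star$—which in the paper goes through $\bar x$ anyway (Lemma~\ref{lem:kkt-obj}). Anchoring on $\bar x$ throughout is both cleaner and what the constants in Assumption~\ref{asmp:mixed-re} are tuned for.
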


Our next result gives the overall iteration complexity of the PGH method
in Algorithm~\ref{alg:homotopy}.
Roughly speaking, if the parameters~$\delta$ and~$\eta$ 
are chosen appropriately, then the total number of proximal-gradient steps 
for finding an $\epsilon$-optimal solution is $O(\ln(1/\epsilon))$.

\begin{theorem}\label{thm:overall-complexity}
Suppose Assumption \ref{asmp:mixed-re} holds with
$\lambda_\mathrm{tgt} \leq \lambda_0$
and the parameters~$\delta$ and~$\eta$ in Algorithm~\ref{alg:homotopy}
are chosen such that 
\[
\frac{1+\delta}{1+\delta'}\leq \eta < 1.
\]
Let $N = \left\lfloor\ln\left(\lambda_0/\lambda_\mathrm{tgt}\right) / 
         \ln\eta^{-1}\right\rfloor$
as in the algorithm. Then:
\begin{enumerate}
\item
The condition~(\ref{eqn:x0-close}) holds for each call of 
Algorithm~\ref{alg:prox-grad}. 
For $K=0,\ldots,N-1$, the number of proximal-gradient steps in each call of 
Algorithm~\ref{alg:prox-grad} is no more than
\[
\ln\left( \frac{C}{\delta^2} \right) \Bigg/
\ln\left( 1 - \frac{1}{4\gammaInc \kappa}\right)^{-1} ,
\]
where 
$C = 8 \gammaInc (1+\kappa)^2 (1+\gamma)\kappa \bar{s}$.
Note that this bound is independent of $\lambda_K$.
\item
For $K=0,\ldots,N-1$, the outer-loop iterates $\hat x^{(K)}$ satisfies
\begin{equation}\label{eqn:outer-geometric}
\phi_{\lambda_\mathrm{tgt}}(\hat x^{(K)}) - \phi_{\lambda_\mathrm{tgt}}^\star  
\leq \eta^{2(K+1)} \,
\frac{4.5(1+\gamma) \lambda_0^2 \bar{s}}{\rho_-(A,\bar{s}+\tilde{s})},
\end{equation}
and the following bound on sparse recovery performance holds
\[
\|\hat x^{(K)} - \bar{x}\|_2 \leq \eta^{K+1} \, 
\frac{2\lambda_0 \sqrt{\bar{s}}}{\rho_-(A,\bar{s}+\tilde{s})} .
\]
\item
When Algorithm~\ref{alg:homotopy} terminates, the total number of 
proximal-gradient steps is no more than
\[
\left(
\frac{\ln(\lambda_0/\lambda_\mathrm{tgt})}{\ln\eta^{-1}} \,
\ln\!\left( \frac{ C }{\delta^2}\right) 
+ \ln \max\left(1, \frac{ \lambda_\mathrm{tgt}^2 C  }{\epsilon^2}\right) 
\right) \Bigg/
\ln\left( 1 - \frac{1}{4\gammaInc \kappa}\right)^{-1} ,
\]
and the output $\hat{x}^\mathrm{(tgt)}$ satisfies
\[
\phi_{\lambda_\mathrm{tgt}}(\hat{x}^\mathrm{(tgt)})
- \phi_{\lambda_\mathrm{tgt}}^\star 
\leq \frac{4(1+\gamma) \lambda_\mathrm{tgt} \bar{s}}
          {\rho_-(A,\bar{s}+\tilde{s})} \,\epsilon.
\]
\end{enumerate}
\end{theorem}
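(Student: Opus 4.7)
The plan is to exploit Theorem~\ref{thm:geometric-rate} inductively across the outer stages of Algorithm~\ref{alg:homotopy}. The central induction hypothesis, maintained at the entry of each call to \texttt{ProxGrad} with parameter $\lambda_{K+1}$, is that the warm-start $\hat{x}^{(K)}$ satisfies both conditions of~(\ref{eqn:x0-close}), namely $\|\hat{x}^{(K)}_{\bar{S}^c}\|_0 \leq \tilde{s}$ and $\omega_{\lambda_{K+1}}(\hat{x}^{(K)}) \leq \delta' \lambda_{K+1}$. Once this is verified, Theorem~\ref{thm:geometric-rate} delivers geometric convergence inside the stage together with sparsity preservation of all intermediate iterates, which in turn validates the hypothesis at the entry of the next stage.

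The base case uses $\hat{x}^{(0)} = 0$, whose sparsity is trivial, and picking $\xi = A^T b/\|A^T b\|_\infty \in \partial\|0\|_1$ yields $\omega_{\lambda_1}(0) \leq (1/\eta - 1)\lambda_1 \leq \delta'\lambda_1$ under the assumption $\eta \geq (1+\delta)/(1+\delta')$. For the inductive step, the stopping criterion of \texttt{ProxGrad} gives $\omega_{\lambda_K}(\hat{x}^{(K)}) \leq \delta \lambda_K$, realized by some $\xi\in\partial\|\hat{x}^{(K)}\|_1$ with $\|\xi\|_\infty\leq 1$; reusing that same $\xi$ at the smaller parameter yields
\[
\omega_{\lambda_{K+1}}(\hat{x}^{(K)}) \leq \delta\lambda_K + (\lambda_K - \lambda_{K+1})\|\xi\|_\infty \leq \bigl((1+\delta)/\eta - 1\bigr)\lambda_{K+1} \leq \delta'\lambda_{K+1},
\]
closing the induction.

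For the per-stage inner iteration count, I combine the geometric contraction on $\phi_{\lambda_{K+1}} - \phi_{\lambda_{K+1}}^\star$ provided by Theorem~\ref{thm:geometric-rate} with the reverse estimate $\omega(T_{M_k}(x^{(k)})) \leq (1+\rho_+/M_k)\sqrt{2 M_k(\phi(x^{(k)})-\phi^\star)}$, obtained by chaining Lemma~\ref{lem:kkt-grad-mapping} (with the global $L_f$ replaced by the restricted smoothness constant $\rho_+(A,\bar{s}+2\tilde{s})$, legitimate because all iterates are sparse) with the descent inequality~(\ref{eqn:monotone-decrease}). The starting value $\phi_{\lambda_{K+1}}(\hat{x}^{(K)}) - \phi_{\lambda_{K+1}}^\star$ is then controlled via restricted strong convexity applied at $\hat{x}^{(K)}$ and $x^\star(\lambda_{K+1})$ together with the bound $\omega_{\lambda_{K+1}}(\hat{x}^{(K)}) \leq \delta'\lambda_{K+1}$, producing an estimate proportional to $\lambda_{K+1}^2$ that cancels against the $\delta^2\lambda_{K+1}^2$ stopping tolerance and leaves the $\lambda$-free iteration bound of Part~1.

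Parts~2 and~3 follow by aggregation. For Part~2, restricted strong convexity converts $\omega_{\lambda_{K+1}}(\hat{x}^{(K+1)}) \leq \delta\lambda_{K+1}$ into an $\ell_2$ bound on $\hat{x}^{(K+1)} - x^\star(\lambda_{K+1})$, while the standard sparse-recovery bound~(\ref{eq:opt-recovery}), guaranteed by~(\ref{eqn:big-lambda}) in the spirit of~\cite{ZhangHuang08}, controls $\|x^\star(\lambda_{K+1}) - \bar{x}\|_2$; substituting $\lambda_{K+1} = \eta^{K+1}\lambda_0$ yields the $\eta^{K+1}$ recovery bound, and restricted smoothness lifts this to the $\eta^{2(K+1)}$ objective bound. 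Part~3 simply sums the uniform per-stage bound $N$ times and appends one final call at $\lambda_\mathrm{tgt}$ run to absolute precision $\epsilon$, for which the same geometric-rate argument (with $\epsilon^2$ replacing $\delta^2\lambda_{K+1}^2$) gives the remaining summand; the terminal objective gap then follows from the same $\omega$-to-$\phi$ conversion used above. The main obstacle I expect is the combinatorial bookkeeping that replaces the naive $\ell_\infty$-to-$\ell_2$ conversion factor $\sqrt{\bar{s}+2\tilde{s}}$ by the sharper $\sqrt{(1+\gamma)\bar{s}}$ appearing inside the constant $C$, which requires a compatibility/cone-type argument essentially identical to those used in~\cite{ZhangHuang08}, exploiting the largeness of $\lambda_\mathrm{tgt}$ in~(\ref{eqn:big-lambda}) to bound $(\hat{x}^{(K)} - \bar{x})_{\bar{S}^c}$ by a controlled multiple of $(\hat{x}^{(K)} - \bar{x})_{\bar{S}}$.
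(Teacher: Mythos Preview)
Your proposal is correct and follows essentially the same route as the paper: an outer induction maintaining~(\ref{eqn:x0-close}) via the subgradient-reuse computation (the paper's Lemma~\ref{lem:starting-kkt}), the chain Lemma~\ref{lem:kkt-grad-mapping}~$\to$~(\ref{eqn:monotone-decrease})~$\to$~Theorem~\ref{thm:geometric-rate} for the per-stage count, and the cone-type inequality $\|(x-\bar{x})_{\bar{S}^c}\|_1\le\gamma\|(x-\bar{x})_{\bar{S}}\|_1$ (the paper's Lemma~\ref{lem:kkt-consequences}) to replace $\sqrt{\bar{s}+2\tilde{s}}$ by $\sqrt{(1+\gamma)\bar{s}}$.

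One point deserves tightening. In Part~2 you propose to obtain the objective bound~(\ref{eqn:outer-geometric}) by first getting the $\ell_2$ recovery bound and then invoking \emph{restricted smoothness}. That route does not close cleanly: smoothness of $f$ at $x^\star(\lambda_\mathrm{tgt})$ produces a quadratic term $\frac{\rho_+}{2}\|\hat{x}^{(K)}-x^\star\|_2^2$ plus a nonnegative Bregman term $\lambda_\mathrm{tgt}\bigl(\|\hat{x}^{(K)}\|_1-\|x^\star\|_1-\langle\xi^\star,\hat{x}^{(K)}-x^\star\rangle\bigr)$ that you cannot simply drop, and handling it drags in an $\ell_1$ distance for which the cone condition is proved against $\bar{x}$, not $x^\star(\lambda_\mathrm{tgt})$. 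The paper instead uses convexity of $\phi_{\lambda'}$ directly (its Lemma~\ref{lem:kkt-obj}): for any $\lambda'\le\lambda$, if $\omega_\lambda(x)\le\delta'\lambda$ then $\phi_{\lambda'}(x)-\phi_{\lambda'}^\star\le(\omega_\lambda(x)+\lambda-\lambda')\|x-x^\star(\lambda')\|_1$, and the $\ell_1$ factor is bounded by triangulating through $\bar{x}$ and applying Lemma~\ref{lem:kkt-consequences} to each piece. Swapping in this convexity step (in place of smoothness) fixes Part~2 and simultaneously delivers the terminal bound in Part~3 by taking $\lambda=\lambda'=\lambda_\mathrm{tgt}$.
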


We have the following remarks regarding these results:
\begin{itemize}

\item
The precision~$\epsilon$ in Algorithm~\ref{alg:homotopy} is measured 
against the optimality residue $\omega_\lambda(x)$.
In terms of the objective gap, suppose $\epsilon_0>0$ is the target precision 
to be reached.
Let
\[
K_0= \left\lceil \frac{1}{2} \ln \left(\frac{4.5 (1+\gamma) \lambda_{0}^2 \bar{s}}{\rho_-(A, \bar{s}+\tilde{s}) \epsilon_0} \right)\bigg/\ln \eta^{-1} \right\rceil - 1 .
\]
From the inequality~(\ref{eqn:outer-geometric}), 
we see that if $0 \leq K_0 \leq N-1$, then for all $K\geq K_0$,
\[
\phi_{\lambda_\mathrm{tgt}}(\hat x^{(K)}) - \phi_{\lambda_\mathrm{tgt}}^\star
\leq \epsilon_0 .
\]
If we let $\epsilon_0\to 0$ and run the PGH method forever,
then the number of proximal-gradient iterations is no more than 
$O(\ln(\lambda_0/\epsilon_0))$ to achieve an $\epsilon_0$ accuracy 
both on the gap of objective value and on the optimality residue
$\omega_\lambda(\cdot) \leq \epsilon_0$. 
This means that the PGH method achieves a global geometric rate of convergence.

\item
When the restricted condition number~$\kappa$ is large, we can use 
the approximation
\[
\ln\left( 1 - \frac{1}{4\gammaInc \kappa}\right)^{-1} 
\approx \frac{1}{4\gammaInc \kappa}.
\]
Then the overall iteration complexity can be estimated by
$O\left(\kappa\, \ln \left( \lambda_0/\epsilon\right) \right)$,
which is proportional to the restricted condition number $\kappa$.

\item
Even if we solve each stage to high precision with 
$\hat{\epsilon}_{K+1}= \min(\epsilon,\delta \lambda_{K+1})$,
the global convergence rate is still near geometric, and the total number 
of proximal-gradient steps is no more than $O( (\ln (\lambda_0/\epsilon))^2)$.
\end{itemize}

Theorem~\ref{thm:overall-complexity} plus restricted strong convexity 
immediately implies that the approximate solutions~$\hat x^{(K)}$ 
(and the last step solution $\hat{x}^\mathrm{(tgt)}$) also converge to 
$x^\star(\lambda_\mathrm{tgt})$ at a globally geometric rate.
A particularly interesting case is noise-free compressed sensing using 
the BP formulation~(\ref{eqn:BP}), which has the optimal solution $\bar{x}$.
For this problem, we can simply run  Algorithm~\ref{alg:homotopy} with 
$\lambda_\mathrm{tgt} = 0$ to solve~(\ref{eqn:BP}). 
While the convergence metrics such as objective value gap or optimality
residue are no longer informative in this case, 
Theorem~\ref{thm:overall-complexity} implies geometric convergence
of the recovery error $\|\hat x^{(K)} - \bar{x}\|_2$.
More precisely, we have:

\begin{corollary}
Suppose $b=A\bar{x}$ and the assumptions stated in 
Theorem~\ref{thm:overall-complexity} hold.
We can choose an arbitrarily small $\lambda_\mathrm{tgt}>0$ 
in Algorithm~\ref{alg:homotopy}, and after $K$ outer iterations, we have
\[
\|\hat x^{(K)} - \bar{x}\|_2 \leq \eta^{K+1} 
\frac{2\lambda_0 \sqrt{\bar{s}}}{\rho_-(A,\bar{s}+\tilde{s}) }.
\]
\end{corollary}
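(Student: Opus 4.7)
The plan is to derive this corollary as a direct specialization of Theorem~\ref{thm:overall-complexity} to the noise-free setting. The whole argument rests on two observations, neither of which requires new calculation.

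First, I would verify that Assumption~\ref{asmp:mixed-re} continues to hold for arbitrarily small $\lambda_\mathrm{tgt}>0$. In the noise-free case $b=A\bar x$ we have $z=0$, hence $\|A^T z\|_\infty = 0$, and the lower-bound condition~(\ref{eqn:big-lambda}) on $\lambda_\mathrm{tgt}$ becomes $\lambda_\mathrm{tgt} \geq 0$, which is vacuous. The remaining parts of Assumption~\ref{asmp:mixed-re} (existence of $\tilde s$ satisfying~(\ref{eqn:mixed-re}), $\rho_-(A,\bar s+2\tilde s)>0$, and the bound on $\Lmin$) are conditions on $A$ alone and so are unaffected by our choice of $\lambda_\mathrm{tgt}$. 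Therefore we may take $\lambda_\mathrm{tgt}>0$ as small as we wish and still invoke Theorem~\ref{thm:overall-complexity}.

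Second, I would invoke item~2 of Theorem~\ref{thm:overall-complexity}, which gives exactly the recovery bound
\[
\|\hat x^{(K)} - \bar x\|_2 \;\leq\; \eta^{K+1}\, \frac{2\lambda_0\sqrt{\bar s}}{\rho_-(A,\bar s+\tilde s)}
\]
for every $K=0,\ldots,N-1$, where $N=\lfloor \ln(\lambda_0/\lambda_\mathrm{tgt})/\ln\eta^{-1}\rfloor$. The only remaining point is that the index range depends on $\lambda_\mathrm{tgt}$: given any fixed $K$, we simply pick $\lambda_\mathrm{tgt}$ small enough that $N-1 \geq K$, which is possible because $N\to\infty$ as $\lambda_\mathrm{tgt}\to 0$. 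Since the right-hand side of the bound does not involve $\lambda_\mathrm{tgt}$, the inequality for $\hat x^{(K)}$ persists for arbitrarily small $\lambda_\mathrm{tgt}$ and hence for every $K\geq 0$.

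There is no real obstacle here; the only subtlety is to make explicit that the recovery bound in Theorem~\ref{thm:overall-complexity} is independent of $\lambda_\mathrm{tgt}$ (it involves only $\lambda_0$, $\eta$, and sparse-eigenvalue quantities), so that letting $\lambda_\mathrm{tgt}\to 0$ extends the validity range in $K$ without weakening the estimate. One could add a brief remark that the usual convergence measures $\phi_{\lambda_\mathrm{tgt}}(\hat x^{(K)}) - \phi^\star_{\lambda_\mathrm{tgt}}$ and $\omega_{\lambda_\mathrm{tgt}}(\hat x^{(K)})$ are no longer meaningful as $\lambda_\mathrm{tgt}\to 0$, which is precisely why the recovery-error bound $\|\hat x^{(K)}-\bar x\|_2$ is the relevant object to track for the basis pursuit problem~(\ref{eqn:BP}).
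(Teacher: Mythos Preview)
Your proposal is correct and follows the same route as the paper: the corollary is stated there as an immediate consequence of Part~2 of Theorem~\ref{thm:overall-complexity}, with the noise-free condition $z=0$ rendering the lower bound~(\ref{eqn:big-lambda}) vacuous so that $\lambda_\mathrm{tgt}$ may be taken arbitrarily small. Your additional care in handling the index range $K\leq N-1$ by letting $\lambda_\mathrm{tgt}\to 0$ just makes explicit what the paper leaves implicit.
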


Note that part~1 of Theorem~\ref{thm:overall-complexity} implies that
$K$ outer iterations of Algorithm~\ref{alg:homotopy} requires 
no more than $O(K)$ proximal-gradient steps.
This result can be interpreted as a global geometric rate of convergence 
for solving the BP problem.

\section{Proofs of convergence results}
\label{sec:analysis}
The proofs of our convergence results are divided into 
the following subsections.
In Section~\ref{sec:sparse-path}, we show that under
Assumption~\ref{asmp:mixed-re}, 
if $x^{(0)}$ is sparse and $\omega_\lambda(x^{(0)})$ is small,
then all iterates generated by Algorithm~\ref{alg:prox-grad} are sparse.
In Section~\ref{sec:geometric-rate}, we use the sparsity along the solution 
path and the restricted eigenvalue condition to show the local geometric 
convergence of Algorithm~\ref{alg:prox-grad}, thus proving
Theorem~\ref{thm:geometric-rate}.
In Section~\ref{sec:overall-complexity}, we show that by setting the parameters
$\delta$ and $\eta$ in Algorithm~\ref{alg:homotopy} appropriately, 
we have geometric convergence at each stage of the homotopy method, 
which leads to the global iteration complexity $O(\log(1/\epsilon))$.

\subsection{Sparsity along the solution path}
\label{sec:sparse-path}
First, we list some useful inequalities that are direct consequences of the
assumption~(\ref{eqn:big-lambda}):
\begin{eqnarray}
(1-\delta')\lambda - \|A^T z\|_\infty &>& 0  
    \label{eqn:big-lambda-1}\\
(1+\delta')\lambda + \|A^T z\|_\infty &\leq& 2 \lambda 
    \label{eqn:big-lambda-2}\\
\lambda + \|A^T z\|_\infty &\leq& (2-\delta')\lambda 
    \label{eqn:big-lambda-3}\\ 
\frac{(1+\delta')\lambda+\|A^T z\|_\infty}{(1-\delta')\lambda-\|A^T z\|_\infty}
&\leq& \gamma .
    \label{eqn:big-lambda-4}
\end{eqnarray}

The following result means that if $x$ is sparse, and it satisfies an 
approximate optimality condition for minimizing~$\phi_\lambda$, 
then $\phi_\lambda(x)$ is not much larger than $\phi_\lambda(\bar{x})$.

\begin{lemma}\label{lem:kkt-consequences}
Suppose Assumption~\ref{asmp:mixed-re} holds, 
and  $\lambda \geq \lambda_\mathrm{tgt}$.  
If $x$ is sparse, i.e., $\|x_{\bar{S}^c}\|_0 \leq \tilde{s}$, 
and it satisfies the approximate optimality condition
\begin{equation}\label{eqn:approx-kkt}
\min_{\xi\in\partial\|x\|_1} \left\| A^T(A x-b) + \lambda \xi \right\|_\infty 
\leq \delta' \lambda ,
\end{equation}
then we have
\begin{equation}\label{eqn:l1-bound}
\| (x-\bar{x})_{\bar{S}^c}\|_1\leq \gamma   \| (x-\bar{x})_{\bar{S}}\|_1 
\end{equation}
and  
\begin{equation}\label{eqn:l2-bound}
\|x-\bar{x}\|_2\leq\frac{2 \lambda\sqrt{\bar{s}} }{\rho_-(A,\bar{s}+\tilde{s})}
\end{equation}
and
\begin{equation}\label{eqn:obj-bound}
\phi_\lambda(x) \leq \phi_\lambda(\bar{x}) + 
\frac{2 \delta' (1+\gamma) \lambda^2 \bar{s} }{ \rho_-(A,\bar{s}+\tilde{s}) }.
\end{equation}
\end{lemma}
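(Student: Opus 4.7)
The plan is to introduce the error vector $h = x - \bar{x}$ and extract the usual cone-type bound $\|h_{\bar{S}^c}\|_1 \leq \gamma \|h_{\bar{S}}\|_1$ from the approximate KKT condition, then combine this with the restricted eigenvalue lower bound on $\|Ah\|_2^2$ to get the $\ell_2$ bound, and finally derive the objective-gap bound from convexity plus the subgradient inequality. All three conclusions come from the same basic inequality, which is obtained by taking an inner product of the approximate KKT residual with $h$.

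First I would fix a $\xi \in \partial \|x\|_1$ that attains the min in (\ref{eqn:approx-kkt}), write $A^T(Ax-b) = A^T A h - A^T z$ (using $b = A\bar{x} + z$), and form the inner product with $h$. Hölder gives $|\langle A^T A h - A^T z + \lambda \xi, h\rangle| \leq \delta'\lambda \|h\|_1$. The crucial subdifferential computation is that since $\bar{x}_{\bar{S}^c} = 0$, one has $\xi_i h_i = |h_i|$ for $i \in \bar{S}^c$, while $|\xi_i h_i| \leq |h_i|$ for $i \in \bar{S}$; hence $\xi^T h \geq \|h_{\bar{S}^c}\|_1 - \|h_{\bar{S}}\|_1$. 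Plugging this and $|(A^T z)^T h| \leq \|A^T z\|_\infty \|h\|_1$ into the above, and keeping $\|Ah\|_2^2 \geq 0$ on the left, yields the master inequality
\[
\|Ah\|_2^2 + \bigl((1-\delta')\lambda - \|A^T z\|_\infty\bigr)\|h_{\bar{S}^c}\|_1
\leq \bigl((1+\delta')\lambda + \|A^T z\|_\infty\bigr)\|h_{\bar{S}}\|_1 .
\]
Dropping the $\|Ah\|_2^2$ term and dividing by the (positive, by (\ref{eqn:big-lambda-1})) coefficient on the left then gives (\ref{eqn:l1-bound}) via (\ref{eqn:big-lambda-4}).

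For (\ref{eqn:l2-bound}), I would keep $\|Ah\|_2^2$ in the master inequality and bound the right-hand side by $2\lambda \|h_{\bar{S}}\|_1$ using (\ref{eqn:big-lambda-2}). The sparsity hypothesis on $x$ gives $\|h\|_0 \leq \bar{s} + \tilde{s}$, so the restricted eigenvalue condition yields $\rho_-(A,\bar{s}+\tilde{s}) \|h\|_2^2 \leq \|Ah\|_2^2$; combining with $\|h_{\bar{S}}\|_1 \leq \sqrt{\bar{s}}\,\|h\|_2$ and cancelling one factor of $\|h\|_2$ produces the claimed bound.

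For (\ref{eqn:obj-bound}), convexity of $f$ gives $f(x) - f(\bar{x}) \leq \langle \nabla f(x), h\rangle$, and $\xi \in \partial\|x\|_1$ gives $\|x\|_1 - \|\bar{x}\|_1 \leq \xi^T h$, so $\phi_\lambda(x) - \phi_\lambda(\bar{x}) \leq \langle \nabla f(x) + \lambda\xi, h\rangle \leq \delta'\lambda \|h\|_1$. To finish, I would convert $\|h\|_1$ to an estimate in terms of $\bar{s}$ and $\lambda$: the cone bound (\ref{eqn:l1-bound}) gives $\|h\|_1 \leq (1+\gamma)\|h_{\bar{S}}\|_1 \leq (1+\gamma)\sqrt{\bar{s}}\,\|h\|_2$, and then (\ref{eqn:l2-bound}) supplies the final factor. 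There is no real obstacle; the only point that needs a little care is the sign bookkeeping when one writes $\xi^T h \geq \|h_{\bar{S}^c}\|_1 - \|h_{\bar{S}}\|_1$, since this is what turns the approximate KKT condition into the one-sided cone inequality that drives everything else.
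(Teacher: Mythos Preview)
Your proposal is correct and follows essentially the same route as the paper: form the inner product of the approximate KKT residual with $h=x-\bar{x}$, use $\xi^T h \geq \|h_{\bar{S}^c}\|_1 - \|h_{\bar{S}}\|_1$ and H\"older to obtain the master inequality, then read off (\ref{eqn:l1-bound}) from (\ref{eqn:big-lambda-1})--(\ref{eqn:big-lambda-4}), use restricted eigenvalues plus $\|h_{\bar{S}}\|_1\leq\sqrt{\bar{s}}\,\|h\|_2$ for (\ref{eqn:l2-bound}), and use convexity of $\phi_\lambda$ together with the cone bound and (\ref{eqn:l2-bound}) for (\ref{eqn:obj-bound}). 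The only cosmetic difference is that the paper writes out the two convexity inequalities for $f$ and $\lambda\|\cdot\|_1$ as a single subgradient inequality for $\phi_\lambda$, which is exactly what you are doing.
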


\begin{proof}
Let $\xi\in\partial\|x\|_1$ be a subgradient that achieves the minimum on 
the left-hand side of~(\ref{eqn:approx-kkt}). 
Then the approximate optimality condition leads to
\begin{eqnarray*}
(x-\bar{x})^T \left( A^T(A x-b) + \lambda \xi \right)
&\leq& \|x-\bar{x}\|_1 \left\|A^T(A x-b) + \lambda \xi \right\|_\infty \\
&\leq& \delta'\lambda \|x-\bar{x}\|_1 .
\end{eqnarray*}
On the other hand, we can use $b=A\bar{x}+z$ to obtain
\begin{eqnarray*}
(x-\bar{x})^T \left( A^T(A x-b) + \lambda \xi \right)
&=& (x-\bar{x})^T A^T \bigl(A(x-\bar{x})-z\bigr) + \lambda(x-\bar{x})^T\xi \\
&=& \left\| A(x-\bar{x}) \right\|_2^2 - (x-\bar{x})^T A^T z 
  + \lambda\, \xi^T (x-\bar{x}) \\
&\geq& \left\| A(x-\bar{x}) \right\|_2^2 - \|x-\bar{x}\|_1 \|A^T z\|_\infty
  + \lambda \, \xi^T (x-\bar{x}).
\end{eqnarray*}
Next, we break the inner product $\xi^T(x-\bar{x})$ into two parts as
\[
\xi^T(x-\bar{x}) = \xi_{\bar{S}}^T (x-\bar{x})_{\bar{S}}
+ \xi_{\bar{S}^c}^T (x-\bar{x})_{\bar{S}^c}.
\]
For the first part, we have (by noticing $\|\xi\|_\infty\leq 1$)
\[
\xi_{\bar{S}}^T (x-\bar{x})_{\bar{S}} ~\geq~
-\|\xi_{\bar{S}}\|_\infty \|(x-\bar{x})_{\bar{S}}\|_1 
~\geq~ - \|(x-\bar{x})_{\bar{S}}\|_1 .
\]
For the second part, we use the facts $\bar{x}_{\bar{S}^c}=0$ and
$\xi \in \partial\|x\|_1$ to obtain
\[
\xi_{\bar{S}^c}^T (x-\bar{x})_{\bar{S}^c} ~=~ x_{\bar{S}^c}^T\xi_{\bar{S}^c} 
~=~ \|x_{\bar{S}^c}\|_1 ~=~  \|(x-\bar{x})_{\bar{S}^c}\|_1 .
\]
Combining the inequalities above gives
\[
\left\| A(x-\bar{x}) \right\|_2^2 - \|A^T z\|_\infty \|x-\bar{x}\|_1 
- \lambda \|(x-\bar{x})_{\bar{S}}\|_1 + \lambda \|(x-\bar{x})_{\bar{S}^c}\|_1 
~\leq~ \delta'\lambda \|x-\bar{x}\|_1 .
\]
Using
$\|x-\bar{x}\|_1 = \|(x-\bar{x})_{\bar{S}}\|_1 + \|(x-\bar{x})_{\bar{S}^c}\|_1$
and rearranging terms, we arrive at
\begin{equation}\label{eqn:l1l2-bound}
\left\| A(x-\bar{x}) \right\|_2^2 +\left((1\!-\!\delta')\lambda 
-\|A^T z\|_\infty\right) \|(x-\bar{x})_{\bar{S}^c}\|_1
~\leq~ 
\left((1\!+\!\delta')\lambda +\|A^T z\|_\infty\right) 
\|(x-\bar{x})_{\bar{S}}\|_1 .
\end{equation}
By further using the inequalities~(\ref{eqn:big-lambda-1})
and~(\ref{eqn:big-lambda-4}), we obtain
\[
\| (x-\bar{x})_{\bar{S}^c}\|_1\leq \gamma   \| (x-\bar{x})_{\bar{S}}\|_1 ,
\]
which is the first desired result in~(\ref{eqn:l1-bound}).

Since by assumption $\|x_{\bar{S}^c}\|_0\leq \tilde{s}$,
we can use the restricted eigenvalue condition to obtain
\begin{eqnarray*}
\rho_-(A,\bar{s}+\tilde{s}) \|x-\bar{x}\|_2^2 
&\leq & \|A(x-\bar{x})\|_2^2 \\
&\leq& \left((1+\delta')\lambda +\|A^T z\|_\infty\right) 
       \|(x-\bar{x})_{\bar{S}}\|_1 \\
&\leq& 2 \lambda \|(x-\bar{x})_{\bar{S}}\|_1 \\
&\leq& 2 \lambda \sqrt{\bar{s}} \, \|(x-\bar{x})_{\bar{S}}\|_2 \\
&\leq& 2 \lambda \sqrt{\bar{s}} \, \|x-\bar{x}\|_2 ,
\end{eqnarray*}
where the second inequality is a result of~(\ref{eqn:l1l2-bound}), 
the third inequality follows from~(\ref{eqn:big-lambda-2}),
and the fourth inequality holds because $|\bar{S}|=\bar{s}$.
This proves the second desired bound in~(\ref{eqn:l2-bound}).

Finally, since $\phi_\lambda$ is convex and 
$A^T(A x-b)+\xi$ is a subgradient of~$\phi$ at~$x$, we have
\[
\phi_\lambda(x) - \phi_\lambda(\bar{x}) 
~\leq~ - \left(A^T (A x-b) + \xi \right)^T (\bar{x}-x)
~\leq~ \delta' \lambda \|\bar{x}-x\|_1 .
\]
From the inequality in~(\ref{eqn:l1-bound}), we have
\[
\|\bar{x}-x\|_1 
~=~ \|(\bar{x}-x)_{\bar{S}}\|_1 + \|(\bar{x}-x)_{\bar{S}^c}\|_1
~\leq~ (1+\gamma) \|(\bar{x}-x)_{\bar{S}}\|_1.
\]
Therefore,
\[
\phi_\lambda(x) - \phi_\lambda(\bar{x}) 
~\leq~ \delta' \lambda (1+\gamma) \|(\bar{x}-x)_{\bar{S}}\|_1
~\leq~ \delta' \lambda (1+\gamma) \sqrt{\bar{s}}\, \|(\bar{x}-x)_{\bar{S}}\|_2,
\]
which, together with~(\ref{eqn:l2-bound}), leads to the third desired result. 
\end{proof}

The following result means that if $x$ is sparse, and $\phi_\lambda(x)$ 
is not much larger than $\phi_\lambda(\bar{x})$, 
then both $\|x-\bar{x}\|_2$ and $\|x-\bar{x}\|_1$ are small.

\begin{lemma}\label{lem:bound-norms}
Suppose Assumption~\ref{asmp:mixed-re} holds, 
and $\lambda \geq \lambda_\mathrm{tgt}$.
Consider $x$ such that
\[
\|x_{\bar{S}^c}\|_0 \leq \tilde{s} , 
\qquad \phi_\lambda(x) \leq \phi_\lambda(\bar{x}) 
+ \frac{2 \delta' (1+\gamma) \lambda^2 \bar{s}}{\rho_-(A,\bar{s}+\tilde{s}) } ,
\]
then 
\[
\max\left\{ \frac{1}{2\lambda} \|A (x-\bar{x})\|_2^2,~\|x-\bar{x}\|_1 \right\}
\leq \frac{4 (1+\gamma) \lambda \bar{s} }{\rho_-(A,\bar{s}+\tilde{s})} .
\]
\end{lemma}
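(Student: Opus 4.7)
\medskip

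\noindent\textbf{Proof plan for Lemma~\ref{lem:bound-norms}.} The plan is to mimic the structure of the proof of Lemma~\ref{lem:kkt-consequences}, but replacing the approximate-KKT inequality with the assumed bound on $\phi_\lambda(x)-\phi_\lambda(\bar{x})$. This will cost us an extra additive slack, which we will need to control by using $\lambda\geq 4\|A^T z\|_\infty$ from~(\ref{eqn:big-lambda}).

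The first step is to expand
\[
\phi_\lambda(x)-\phi_\lambda(\bar{x})
= \tfrac{1}{2}\|A(x-\bar{x})\|_2^2 - z^T A(x-\bar{x}) + \lambda\bigl(\|x\|_1-\|\bar{x}\|_1\bigr),
\]
using $b=A\bar{x}+z$. Splitting $x$ along $\bar{S}$ and $\bar{S}^c$ and using $\bar{x}_{\bar{S}^c}=0$ together with the reverse triangle inequality on the $\bar{S}$-block gives
\[
\|x\|_1-\|\bar{x}\|_1 \geq \|(x-\bar{x})_{\bar{S}^c}\|_1 - \|(x-\bar{x})_{\bar{S}}\|_1.
\]
Bounding $|z^T A(x-\bar{x})|\leq \|A^T z\|_\infty\|x-\bar{x}\|_1$ and invoking the objective-gap hypothesis, I obtain the key inequality
\[
\tfrac{1}{2}\|A(x-\bar{x})\|_2^2 + (\lambda-\|A^T z\|_\infty)\|(x-\bar{x})_{\bar{S}^c}\|_1
\leq (\lambda+\|A^T z\|_\infty)\|(x-\bar{x})_{\bar{S}}\|_1
+ \frac{2\delta'(1+\gamma)\lambda^2\bar{s}}{\rho_-(A,\bar{s}+\tilde{s})}.
\]

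The second step is to exploit this inequality twice. First, I drop the nonnegative $\|A(x-\bar{x})\|_2^2$ term and divide by $\lambda-\|A^T z\|_\infty$. Since $\lambda\geq 4\|A^T z\|_\infty$, we have $\lambda-\|A^T z\|_\infty\geq \tfrac{3}{4}\lambda$, and the ratio $(\lambda+\|A^T z\|_\infty)/(\lambda-\|A^T z\|_\infty)$ is at most $\gamma$ by monotonicity and~(\ref{eqn:big-lambda-4}). This yields a perturbed cone condition
\[
\|(x-\bar{x})_{\bar{S}^c}\|_1 \leq \gamma\|(x-\bar{x})_{\bar{S}}\|_1 + \frac{8\delta'(1+\gamma)\lambda\bar{s}}{3\rho_-(A,\bar{s}+\tilde{s})},
\]
and consequently $\|x-\bar{x}\|_1 \leq (1+\gamma)\|(x-\bar{x})_{\bar{S}}\|_1 + (\text{slack})$.

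The third step converts this to an $\ell_2$ estimate. Since $\|x_{\bar{S}^c}\|_0\leq \tilde{s}$, the vector $x-\bar{x}$ has at most $\bar{s}+\tilde{s}$ nonzeros, so the restricted eigenvalue condition gives $\rho_-(A,\bar{s}+\tilde{s})\|x-\bar{x}\|_2^2 \leq \|A(x-\bar{x})\|_2^2$. Returning to the key inequality, bounding the right side via $\lambda+\|A^T z\|_\infty\leq 2\lambda$ (by~(\ref{eqn:big-lambda-2})) and $\|(x-\bar{x})_{\bar{S}}\|_1\leq \sqrt{\bar{s}}\|x-\bar{x}\|_2$ produces a quadratic inequality in $u=\|x-\bar{x}\|_2$ of the form $\rho_-u^2 \leq 4\lambda\sqrt{\bar{s}}\,u + C\lambda^2\bar{s}/\rho_-$, from which $u\leq O(\lambda\sqrt{\bar{s}}/\rho_-(A,\bar{s}+\tilde{s}))$ follows.

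Finally, plugging this $\ell_2$ bound into $\|(x-\bar{x})_{\bar{S}}\|_1\leq \sqrt{\bar{s}}\,u$ and then into the perturbed cone condition yields the desired bound on $\|x-\bar{x}\|_1$; substituting it once more into the right side of the key inequality yields the bound on $\tfrac{1}{2\lambda}\|A(x-\bar{x})\|_2^2$. The main bookkeeping obstacle is tracking the additive slack $\delta'(1+\gamma)\lambda^2\bar{s}/\rho_-$ carefully enough to absorb it into a clean constant (here, $4$) in the final bound; this is where the hypothesis $\lambda\geq 4\|A^T z\|_\infty$ and $\delta'<1$ are used crucially to dominate the slack terms by the leading $\sqrt{\bar{s}}$-term.
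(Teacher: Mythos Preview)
Your derivation of the key inequality
\[
\tfrac{1}{2}\|A(x-\bar{x})\|_2^2 + (\lambda-\|A^T z\|_\infty)\|(x-\bar{x})_{\bar{S}^c}\|_1
\leq (\lambda+\|A^T z\|_\infty)\|(x-\bar{x})_{\bar{S}}\|_1 + \Delta,
\qquad \Delta=\frac{2\delta'(1+\gamma)\lambda^2\bar{s}}{\rho_-(A,\bar{s}+\tilde{s})},
\]
is exactly what the paper obtains. The gap is in how you handle the additive slack $\Delta$. Carrying it through directly, as you propose, yields the quadratic $\rho_- u^2 \leq 4\lambda\sqrt{\bar{s}}\,u + 2\Delta$ in $u=\|x-\bar{x}\|_2$, whose solution is $u\leq \tfrac{2\lambda\sqrt{\bar{s}}}{\rho_-}\bigl(1+\sqrt{1+\delta'(1+\gamma)}\bigr)$. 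Feeding this into your perturbed cone estimate gives a constant in front of $(1+\gamma)\lambda\bar{s}/\rho_-$ equal to $2+2\sqrt{1+\delta'(1+\gamma)}+\tfrac{8}{3}\delta'$, which is \emph{strictly greater than $4$} for every $\delta'>0$ and every $\gamma>0$. Since Assumption~\ref{asmp:mixed-re} places no upper bound on $\gamma$, your constant can be arbitrarily large; the ``bookkeeping'' you flag is not a matter of care but a genuine obstruction to reaching the stated constant $4$, which is used downstream with that exact value in Lemma~\ref{lem:local-sparse} and in~(\ref{eqn:mixed-re}).

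The paper circumvents this with a case split rather than a quadratic. If $\|x-\bar{x}\|_1 \leq \Delta/(\delta'\lambda) = 2(1+\gamma)\lambda\bar{s}/\rho_-$, both claimed bounds follow immediately from the key inequality and~(\ref{eqn:big-lambda-3}). Otherwise $\Delta < \delta'\lambda\,\|x-\bar{x}\|_1$, so the slack is absorbed back into the left- and right-hand sides, reproducing exactly the inequality~(\ref{eqn:l1l2-bound}) from Lemma~\ref{lem:kkt-consequences} \emph{with no additive term}; the unperturbed cone condition and $\|x-\bar{x}\|_2\leq 4\lambda\sqrt{\bar{s}}/\rho_-$ then follow as before, and $\gamma>1$ closes the bound on $\tfrac{1}{2}\|A(x-\bar{x})\|_2^2$. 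This dichotomy is the missing idea in your plan.
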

In fact, similar results holds under the condition 
$\omega_\lambda(x)\leq \delta'\lambda$, and are already proved in
Lemma~\ref{lem:kkt-consequences}.
However, in the proximal gradient method, the optimality residue
$\omega_\lambda(x^{(k)})$ may not be monotonic decreasing, 
but the objective function $\phi_\lambda(x^{(k)})$ is.
So in order to establish the desired results for all iterates along 
the solution path, we need to show them when the objective function is
sufficiently small, which is more involved.

\begin{proof}
For notational convenience, let
\[
\Delta = 
\frac{2 \delta' (1+\gamma) \lambda^2 \bar{s}}{\rho_-(A,\bar{s}+\tilde{s}) } .
\]
We write the assumption $\phi_\lambda(x)\leq\phi_\lambda(\bar x)+\Delta$ 
explicitly as
\begin{equation}\label{eqn:close-obj}
\frac{1}{2}\|Ax-b\|_2^2 + \lambda \|x\|_1 \leq
\frac{1}{2}\|A\bar x-b\|_2^2 + \lambda \|\bar x\|_1 + \Delta.
\end{equation}
We can expand the least-squares part in $\phi_\lambda(x)$ as
\begin{eqnarray*}
\frac{1}{2}\|Ax-b\|_2^2  
&=& \frac{1}{2}\|(A\bar x-b) + A(x-\bar x)\|_2^2  \\
&=& \frac{1}{2}\|(A\bar x-b)\|_2^2 + \frac{1}{2}\|A(x-\bar x)\|_2^2 
    +(x-\bar x)^T A^T (A\bar x-b)\\
&\geq& \frac{1}{2}\|(A\bar x-b)\|_2^2 + \frac{1}{2}\|A(x-\bar x)\|_2^2
    -\|x-\bar x\|_1 \| A^T (A\bar{x}-b)\|_\infty .
\end{eqnarray*}
Plugging the above inequality into~(\ref{eqn:close-obj}),
and noticing $A\bar{x}-b=z$, we obtain
\[
\frac{1}{2}\|A(x-\bar x)\|_2^2 - \|x-\bar x\|_1 \| A^T z\|_\infty  
+ \lambda \|x\|_1 ~\leq~ \lambda \|\bar x\|_1 + \Delta.
\]
Using the fact $\bar x_{\bar{S}^c}=0$, we have
\[
\|x\|_1 
= \|x_{\bar{S}^c}\|_1 + \|x_{\bar{S}}\|_1
= \|x_{\bar{S}^c} - \bar x_{\bar{S}^c}\|_1 + \|x_{\bar{S}}\|_1 .
\]
Therefore
\begin{eqnarray*}
 \frac{1}{2}\|A(x-\bar x)\|_2^2 - \|x-\bar x\|_1 \| A^T z\|_\infty  
       + \lambda \|x_{\bar{S}^c} - \bar x_{\bar{S}^c}\|_1 
&\leq& \lambda \left( \|\bar x_{\bar{S}}\|_1 - \|x_{\bar{S}}\|_1 \right) 
       + \Delta \\
&\leq& \lambda \, \|\bar x_{\bar{S}} - x_{\bar{S}}\|_1 + \Delta.
\end{eqnarray*}
By further splitting $\|x-\bar{x}\|_1$ on the left-hand side as 
$\|(x-\bar{x})_{\bar{S}}\|_1 + \|(x-\bar{x})_{\bar{S}^c}\|_1$, we get
\begin{equation}\label{eqn:two-cases}
\frac{1}{2} \|A (x-\bar{x})\|_2^2 + 
 \left(\lambda -\|A^T z\|_\infty\right) \|(x-\bar{x})_{\bar{S}^c}\|_1
~\leq~ \left(\lambda +\|A^T z\|_\infty \right) \|(x-\bar{x})_{\bar{S}}\|_1 
+\Delta.
\end{equation}

Now there are two possible cases. 
In the first case, we assume
\begin{equation}\label{eqn:lem-err-1}
\|x-\bar{x}\|_1
~\leq~ \frac{\Delta}{\delta'\lambda}
~=~ \frac{2 (1+\gamma) \lambda \bar{s}}{ \rho_-(A,\bar{s}+\tilde{s})} .
\end{equation}
From~(\ref{eqn:big-lambda-1}), we know that
$\left(\lambda -\|A^T z\|_\infty\right) \|(x-\bar{x})_{\bar{S}^c}\|_1$
is nonnegative, so we can drop it from the left-hand side 
of~(\ref{eqn:two-cases}) to obtain
\begin{eqnarray*}
\frac{1}{2} \|A (x-\bar{x})\|_2^2  
&\leq& \left(\lambda +\|A^T z\|_\infty \right) \|(x-\bar{x})_{\bar{S}}\|_1 
  + \Delta\\
&\leq& (2\lambda -\delta' \lambda) \|(x-\bar{x})_{\bar{S}}\|_1  + \Delta\\
&\leq& (2\lambda -\delta' \lambda) \frac{2 (1+\gamma) \lambda \bar{s} }{ 
  \rho_-(A,\bar{s}+\tilde{s})}
 + \frac{2 \delta' (1+\gamma) \lambda^2 \bar{s}}{\rho_-(A,\bar{s}+\tilde{s})}\\
&=& \frac{4\lambda (1+\gamma) \lambda \bar{s} }{\rho_-(A,\bar{s}+\tilde{s})},
\end{eqnarray*}
where in the second inequality we used~(\ref{eqn:big-lambda-3}), 
and in the third inequality we used~(\ref{eqn:lem-err-1}).
This means the claim holds.

In the second case, the assumption in~(\ref{eqn:lem-err-1}) does not hold.
Then $\Delta<\delta'\lambda\|x-\bar{x}\|_1$ 
and~(\ref{eqn:two-cases}) implies
\[
\frac{1}{2} \|A (x-\bar{x})\|_2^2 + 
\left(\lambda -\|A^T z\|_\infty \right) \|(x-\bar{x})_{\bar{S}^c}\|_1
\leq \left(\lambda +\|A^T z\|_\infty \right) \|(x-\bar{x})_{\bar{S}}\|_1 
  + \delta' \lambda   \|x-\bar{x}\|_1 .
\]
Again we split $\|x-\bar{x}\|_1$ as 
$\|(x-\bar{x})_{\bar{S}}\|_1 + \|(x-\bar{x})_{\bar{S}^c}\|_1$ to obtain
\begin{equation}\label{eqn:pre-rec}
\frac{1}{2} \|A (x-\bar{x})\|_2^2 + 
\left((1-\delta')\lambda -\|A^T z\|_\infty\right) \|(x-\bar{x})_{\bar{S}^c}\|_1
~\leq~ 
\left((1+\delta')\lambda +\|A^T z\|_\infty\right) \|(x-\bar{x})_{\bar{S}}\|_1 .
\end{equation}
By further using the inequalities~(\ref{eqn:big-lambda-1})
and~(\ref{eqn:big-lambda-4}), we get
\begin{equation}\label{eqn:l1-bound-1}
\|(x-\bar{x})_{\bar{S}^c}\|_1 ~\leq~
\frac{(1+\delta')\lambda+\|A^T z\|_\infty}{(1-\delta')\lambda-\|A^T z\|_\infty}
\|(x-\bar{x})_{\bar{S}}\|_1
~\leq~ \gamma \|(x-\bar{x})_{\bar{S}}\|_1.
\end{equation}
Moreover, we can use the restricted eigenvalue condition and the assumption
$\|x_{\bar{S}^C}\|_0\leq\tilde{s}$ to obtain
\begin{eqnarray*}
\frac{1}{2}\rho_-(A,\bar{s}+\tilde{s}) \|x-\bar{x}\|_2^2 
&\leq& \frac{1}{2} \|A (x-\bar{x})\|_2^2 \\
&\leq& \left((1+\delta')\lambda +\|A^T z\|_\infty\right) 
       \|(x-\bar{x})_{\bar{S}}\|_1  \\
&\leq& 2 \lambda \|(x-\bar{x})_{\bar{S}}\|_1  \\
&\leq& 2 \lambda \sqrt{\bar{s}} \, \|(x-\bar{x})_{\bar{S}}\|_2  \\
&\leq& 2 \lambda \sqrt{\bar{s}} \, \|x-\bar{x}\|_2  ,
\end{eqnarray*}
where the second inequality follows from~(\ref{eqn:pre-rec}),
the third inequality follows from~(\ref{eqn:big-lambda-2}),
and the forth inequality holds because $|\bar{S}|=\bar{s}$.
Hence
\[
\|x-\bar{x}\|_2 
\leq \frac{4 \lambda \sqrt{\bar{s}}}{\rho_-(A,\bar{s}+\tilde{s})}.
\]
The above arguments also imply
\[
\frac{1}{2} \|A (x-\bar{x})\|_2^2 
~\leq~ 2 \lambda \sqrt{\bar{s}} \, \|x-\bar{x}\|_2 
~\leq~ \frac{8 \lambda^2 \bar{s}}{\rho_-(A,\bar{s}+\tilde{s})}
~\leq~ \frac{4(1+\gamma) \lambda^2 \bar{s}}{\rho_-(A,\bar{s}+\tilde{s})},
\]
where the last inequality holds because $\gamma>1$.
Finally, using~(\ref{eqn:l1-bound-1}), we get
\[
\|x-\bar{x}\|_1 ~\leq~ (1+\gamma)\|(x-\bar{x})_{\bar{S}}\|_1
~\leq~ (1+\gamma)\sqrt{\bar{s}} \, \|(x-\bar{x})_{\bar{S}}\|_2
~\leq~ \frac{4 (1+\gamma) \lambda \bar{s}}{\rho_-(A,\bar{s}+\tilde{s})}.
\]
These prove the desired bound. 
\end{proof}

The following lemma means that if $x$ is sparse and $\phi_\lambda(x)$ 
is not much larger than $\phi_\lambda(\bar{x})$, 
then $T_{\lambda,L}(x)$ is sparse.

\begin{lemma}\label{lem:local-sparse}
Suppose Assumption~\ref{asmp:mixed-re} holds, 
and $\lambda \geq \lambda_\mathrm{tgt}$. 
Suppose $x$ satisfies
\begin{equation}\label{eqn:obj-close}
\|x_{\bar{S}^c}\|_0 \leq \tilde{s} , \qquad 
\phi_\lambda(x) \leq \phi_\lambda(\bar{x}) 
+ \frac{2 \delta' (1+\gamma) \lambda^2 \bar{s} }{\rho_-(A,\bar{s}+\tilde{s})} ,
\end{equation}
and $L<\gammaInc\rho_+(A,\bar{s}+2\tilde{s})$.
Then 
\[
\left\|\bigl(T_{\lambda,L}(x)\bigr)_{\bar{S}^c}\right\|_0 < \tilde{s}.
\]
\end{lemma}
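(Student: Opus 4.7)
The plan is to argue by contradiction. Write $y = T_{\lambda,L}(x)$ and $T = \supp(y) \cap \bar{S}^c$, and suppose $|T| \geq \tilde{s}$. The starting point is that $y$ is obtained by soft-thresholding of $x - L^{-1}\nabla\!f(x)$, so $y_j \neq 0$ forces $|L x_j - (\nabla\!f(x))_j| > \lambda$; using $\nabla\!f(x) = A^T A(x-\bar{x}) - A^T z$ and $\bar{x}_j = 0$ on $\bar{S}^c$, this rewrites for every $j \in T$ as $|((LI-A^TA)(x-\bar{x}))_j + (A^T z)_j| > \lambda$.

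First I would invoke Lemma~\ref{lem:bound-norms} on $x$, which applies verbatim and yields $\|A(x-\bar{x})\|_2^2 \leq 8(1+\gamma)\lambda^2\bar{s}/\rho_-(A,\bar{s}+\tilde{s})$ and, by restricted strong convexity applied to the $(\bar{s}+\tilde{s})$-sparse vector $x-\bar{x}$, $\|x-\bar{x}\|_2 \leq \sqrt{8(1+\gamma)\bar{s}}\,\lambda/\rho_-(A,\bar{s}+\tilde{s})$, along with the cone condition $\|(x-\bar{x})_{\bar{S}^c}\|_1 \leq \gamma \|(x-\bar{x})_{\bar{S}}\|_1$ implicit in its proof. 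Next I would pick any $T' \subseteq T$ with $|T'|=\tilde{s}$, square the previous display and sum over $T'$, apply Minkowski's inequality, and absorb $\|P_{T'}A^T z\|_2 \leq \sqrt{\tilde{s}}\|A^T z\|_\infty$ using $\lambda \geq 4\|A^T z\|_\infty$; this yields $\|P_{T'}(LI-A^TA)(x-\bar{x})\|_2 > \frac{3}{4}\sqrt{\tilde{s}}\,\lambda$.

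The left-hand side is then split by triangle inequality into $\|A_{T'}^T A(x-\bar{x})\|_2 + L\|(x-\bar{x})_{T'}\|_2$, the first term being bounded via the restricted eigenvalue as $\sqrt{\rho_+(A,\tilde{s})}\|A(x-\bar{x})\|_2$. Combined with the bound on $\|A(x-\bar{x})\|_2$ above, this produces a contribution that, after squaring, matches the $\rho_+(A,\tilde{s})$ term in Assumption~\ref{asmp:mixed-re}; the second term, via $L \leq \gammaInc\rho_+(A,\bar{s}+2\tilde{s})$, should match the $\gammaInc\rho_+(A,\bar{s}+2\tilde{s})$ term. Squaring and dividing by $\lambda^2$ then gives an upper bound on $\tilde{s}$ that violates the strict lower bound in Assumption~\ref{asmp:mixed-re}, producing the contradiction.

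The main obstacle is achieving the right scaling for the $L\|(x-\bar{x})_{T'}\|_2$ term. A naive estimate $\|(x-\bar{x})_{T'}\|_2 \leq \|x-\bar{x}\|_2$ yields, after squaring, a contribution of order $L^2/\rho_-(A,\bar{s}+\tilde{s})^2$, which is quadratic in $\rho_+/\rho_-$ rather than the linear dependence appearing in the assumption. The fix is to exploit that $(x-\bar{x})_j = x_j$ for $j \in T' \subseteq \bar{S}^c$, so the portion of $T'$ where $x_j \neq 0$ is contained in $\supp(x_{\bar{S}^c})$ with cardinality at most $\tilde{s}$, and then use the cone inequality from the proof of Lemma~\ref{lem:bound-norms} to replace the loose $\ell_2$ bound on $(x-\bar{x})_{T'}$ by an $\ell_1$-based one controlled by $\|x-\bar{x}\|_1 \leq 4(1+\gamma)\lambda\bar{s}/\rho_-(A,\bar{s}+\tilde{s})$; together with $L \leq \gammaInc\rho_+(A,\bar{s}+2\tilde{s})$ this delivers the correct linear $\rho_+/\rho_-$ dependence.
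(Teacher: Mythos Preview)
Your setup is fine: the soft-thresholding characterization, the per-coordinate inequality $|((LI-A^TA)(x-\bar{x}))_j + (A^Tz)_j| > \lambda$ on $T$, and the invocation of Lemma~\ref{lem:bound-norms} are all correct, and you correctly identify the obstacle. The problem is that your proposed fix does not resolve it.

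Once you aggregate in $\ell_2$ over $T'$ and square at the end, the contribution of the $L\|(x-\bar{x})_{T'}\|_2$ term is $L^2\|(x-\bar{x})_{T'}\|_2^2$. Replacing the $\ell_2$ estimate by the $\ell_1$ one, $\|(x-\bar{x})_{T'}\|_2 \leq \|(x-\bar{x})_{T'}\|_1 \leq \|x-\bar{x}\|_1 \leq 4(1+\gamma)\lambda\bar{s}/\rho_-(A,\bar{s}+\tilde{s})$, makes the bound \emph{larger}, not smaller: after squaring you get a term of order $L^2(1+\gamma)^2\bar{s}^2\lambda^2/\rho_-^2$, still quadratic in $L/\rho_-$ (and now quadratic in $\bar{s}$ as well). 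Neither the cone inequality nor the observation that $\supp(x_{T'})\subseteq\supp(x_{\bar{S}^c})$ changes this scaling: any upper bound on $\|(x-\bar{x})_{T'}\|_2$ that does not itself carry a factor $1/\sqrt{L}$ will, after squaring and multiplying by $L^2$, produce a quadratic $L$-dependence. So the $\ell_2$-aggregation route cannot recover the linear $\rho_+/\rho_-$ form of Assumption~\ref{asmp:mixed-re}.

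The paper avoids this by never aggregating the $Lx_j$ part in $\ell_2$ at all. It splits the threshold $\lambda$ pointwise as $\lambda/4+\lambda/4+\lambda/2$ and counts separately: if $j\in\bar{S}^c$ lies in the support of $T_{\lambda,L}(x)$, then at least one of $|x_j|>\lambda/(4L)$, $|(A^Tz)_j|>\lambda/4$, or $|(A^TA(x-\bar{x}))_j|>\lambda/2$ must hold. The noise set is empty by $\|A^Tz\|_\infty\leq\lambda/4$. The set $\{j\in\bar{S}^c:|x_j|>\lambda/(4L)\}$ is bounded by the Markov-type count $4L\lambda^{-1}\|x-\bar{x}\|_1 \leq 16L(1+\gamma)\bar{s}/\rho_-$, which is where the \emph{linear} $L$-dependence comes from. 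The remaining set $\{j:|(A^TA(x-\bar{x}))_j|\geq\lambda/2\}$ is bounded by an argument essentially identical to your $\|A_{T'}^TA(x-\bar{x})\|_2$ estimate, yielding $32\rho_+(A,\tilde{s})(1+\gamma)\bar{s}/\rho_-$. Summing these two counts gives exactly the $16(\gammaInc\rho_+(A,\bar{s}+2\tilde{s})+2\rho_+(A,\tilde{s}))(1+\gamma)\bar{s}/\rho_-$ in Assumption~\ref{asmp:mixed-re}. The essential idea you are missing is this pointwise threshold split combined with $\ell_1$-Markov counting for the $Lx_j$ piece; your contradiction framework is compatible with it, but the $\ell_2$ summation followed by Minkowski is not.
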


\begin{proof}
Recall that $T_{\lambda,L}$ can be computed by the soft-thresholding operator
as in~(\ref{eqn:IST}).
That is,
\[
(T_L(x))_i = \sgn(\tilde x_i) \max \left\{ |\tilde x_i|
 - \frac{\lambda}{L}, ~0 \right\}, \qquad i=1,\ldots,n,
\]
where
\[
\tilde x = x - \frac{1}{L} A^T( Ax-b) 
= x - \frac{1}{L} A^T A(x-\bar x) + \frac{1}{L}A^T z.
\]
In order to upper bound the number of nonzero elements 
in $(T_L(x))_{\bar{S}^c}$,
we split the truncation threshold $\lambda/L$ on elements of
$\tilde{x}_{\bar{S}^c}$ into three parts: 
\begin{itemize}
\item $\lambda/4L$ on elements of $x_{\bar{S}^c}$,
\item $\lambda/4L$ on elements of $(1/L)A^T z$, and
\item $\lambda/2L$ on elements of $(1/L)A^T A(x-\bar{x})$.
\end{itemize}
Since by assumption $\|A^T z\|_\infty \leq \lambda/4$, we have
$\bigl|\{j:((1/L)A^T z)_j > \lambda/4L\}\bigr|=0$.
Therefore,
\[
\left\|\bigl(T_L(x)\bigr)_{\bar{S}^c} \right\|_0
~\leq~ \left|\bigl\{j\in\bar{S}^c : |x_j| > \lambda/4L\bigr\} \right|
+ \bigl| \bigl\{j: \bigl|\bigl(A^T A (x-\bar{x})\bigr)_j \bigr| \geq \lambda/2 
  \bigr\} \bigr| .
\]
Note that 
\begin{eqnarray}
\bigl|\{j\in\bar{S}^c :|x_j| \geq \lambda/4L\} \bigr|
&=& \bigl|\{j\in\bar{S}^c :|(x-\bar{x})_j| \geq \lambda/4L\} \bigr| \nonumber\\
&\leq& \bigl|\{j :|(x-\bar{x})_j| \geq \lambda/4L\} \bigr| \nonumber\\
&\leq& 4L \lambda^{-1} \|x-\bar{x}\|_1 \nonumber\\
&\leq& \frac{16 L (1+\gamma) \bar{s} }{\rho_-(A,\bar{s}+\tilde{s})} ,
\label{eqn:big-x-bound}
\end{eqnarray}
where the last inequality follows from Lemma~\ref{lem:bound-norms}.

For the last part, consider $S'$ with maximum size $s'=|S'| \leq \tilde{s}$ such that
\[
S' \subset \{j: |(A^T A (x-\bar{x}))_j| \geq \lambda /2 \} .
\]
Then there exists $u$ such that $\|u\|_\infty=1$ and $\|u\|_0=s'$, and
$s' \lambda/2 \leq  u^T A^T A (x-\bar{x})$.
Moreover,
\[
s' \lambda/2 ~\leq~  u^T A^T A (x-\bar{x})
~\leq~ \|A u\|_2 \|A (x-\bar{x})\|_2 
~\leq~ \sqrt{\rho_+(A,s')} \sqrt{s'} 
\sqrt{\frac{8(1+\gamma)\lambda^2 \bar{s}}{\rho_-(A,\bar{s}+\tilde{s})} } ,
\]
where the last inequality again follows from Lemma~\ref{lem:bound-norms}.
Taking squares of both sides of the above inequality gives
\[
s'\leq \frac{32\,\rho_+(A,s') (1+\gamma) \bar{s} }{\rho_-(A,\bar{s}+\tilde{s})} 
\leq 
\frac{32\,\rho_+(A,\tilde{s}) (1+\gamma) \bar{s} }{\rho_-(A,\bar{s}+\tilde{s})} 
< \tilde{s} ,
\]
where the last inequality is due to (\ref{eqn:mixed-re}).
Since $s'=|S'|$ achieves the maximum possible value such that 
$s' \leq \tilde{s}$
for any subset $S'$ of $\{j: |(A^T A (x^{(k)}-\bar{x}))_j| \geq \lambda /2 \}$,
and the above inequality shows that $s' < \tilde{s}$, we must have
\[
S' = \{j: |(A^T A (x^{(k)}-\bar{x}))_j| \geq \lambda /2 \} ,
\]
and thus
\[
\bigl| \{j: |(A^T A (x^{(k)}-\bar{x}))_j| \geq \lambda /2 \} \bigr|  =s' \leq 
\left\lfloor \frac{32\,\rho_+(A,\tilde{s}) (1+\gamma) \bar{s} }{\rho_-(A,\bar{s}+\tilde{s})} \right\rfloor .
\]
Finally, combining the above bound  with the bound 
in~(\ref{eqn:big-x-bound}) gives 
\[
\left\|\bigl(T_{\lambda,L}(x)\bigr)_{\bar{S}^c}\right\|_0 ~\leq~
\frac{16 \left(L + 2\rho_+(A, \tilde{s}) \right)}{\rho_-(A, \bar{s}+\tilde{s})}
(1+\gamma)\bar{s} .
\]
Under the assumption $L<\gammaInc\rho_+(A,\bar{s}+2\tilde{s})$ 
and~(\ref{eqn:mixed-re}),
the right-hand side of the above inequality is less than~$\tilde{s}$.
This proves the desired result.
\end{proof}

Recall that each iteration of Algorithm~\ref{alg:prox-grad} takes the form
$x^{(k+1)}=T_{\lambda,M_k}(x^{(k)})$.
According to~(\ref{eqn:monotone-decrease}), the objective value 
$\phi_\lambda(x^{(k)})$ is monotone decreasing.
So if $x^{(0)}$ satisfies the condition~(\ref{eqn:obj-close}),
every iterate $x^{(k)}$ satisfies the same condition.
In order to show 
\[
\|(x^{(k)})_{\bar{S}^c}\|_0 < \tilde{s}, \quad \forall\, k>0,
\]
we only need to note that the line-search procedure 
(Algorithm~\ref{alg:line-search}) always terminates with 
\begin{equation}\label{eqn:local-lipschitz}
M_k\leq\gammaInc\rho_+(A,\bar{s}+2\tilde{s}).
\end{equation}
Indeed, as long as
\[
M_k \in [\rho_+(A,\bar{s}+2\tilde{s}), \gammaInc\rho_+(A,\bar{s}+2\tilde{s}) ], 
\]
Lemma~\ref{lem:local-sparse} implies that
$\left\|\bigl(T_{\lambda,L}(x)\bigr)_{\bar{S}^c}\right\|_0 < \tilde{s}$
and the restricted smoothness property~(\ref{eqn:restr-smoothness}) implies the termination of line-search.

\subsection{Proof of Theorem~\ref{thm:geometric-rate}}
\label{sec:geometric-rate}
In this subsection, we show that for any fixed~$\lambda$, the sequence
$\bigl\{x^{(k)}\bigr\}_{k=0}^\infty$ generated by Algorithm~\ref{alg:prox-grad}
(without invoking the stopping criteria) has a limit 
and the local rate of convergence is geometric. 

First, since the sub-level set 
$\{x:\phi_\lambda(x)\leq\phi_\lambda(x^{(0)})\}$ is 
bounded and $\phi_\lambda(x^{(k)})$ is monotone decreasing, 
the sequence $\bigl\{x^{(k)}\bigr\}_{k=0}^\infty$ is bounded.
By the Bolzano-Weierstrass theorem, it has a convergent subsequence and
a corresponding accumulation point.
Moreover, from the inequality~(\ref{eqn:monotone-decrease}) and the fact
that $\phi_\lambda(x)$ is bounded below, we conclude that 
\[
\lim_{k\to\infty} \|g_{\lambda,L}(x^{(k)})\|_2 = 0.
\]
By Lemma~\ref{lem:kkt-grad-mapping}, this implies that any accumulation point 
of the sequence $\bigl\{x^{(k)}\bigr\}_{k=0}^\infty$
satisfies the optimality condition, therefore is a minimizer of $\phi_\lambda$.

Let $x^\star(\lambda)$ denote an accumulation point of the sequence
$\bigl\{x^{(k)}\bigr\}_{k=0}^\infty$.
As a consequence of Lemma~\ref{lem:local-sparse}, any accumulation point 
is also sparse; In particular, we have 
$\|(x^\star(\lambda))_{\bar{S}^c}\|_0 \leq \tilde{s}$.

Now using the restricted strong convexity 
property~(\ref{eqn:restr-strong-convex}), we have
\begin{equation}\label{eqn:f-strong-convex}
f(x) \geq f(x^\star) + \langle \nabla\!f(x^\star(\lambda)),x-x^\star(\lambda)
\rangle + \frac{\rho_-(A,\bar{s}+2\tilde{s})}{2} \|x-x^\star(\lambda)\|_2^2.
\end{equation}
Since $x^\star(\lambda) = \argmin_x \{f(x) + \lambda \|x\|_1\}$,
there must exists $\xi\in\partial\|x^\star(\lambda)\|_1$ such that
\begin{equation}\label{eqn:kkt-condition}
    \nabla\!f(x^\star(\lambda)) + \lambda \xi = 0.
\end{equation}
Since $\xi\in\partial\|x^\star(\lambda)\|_1$, we also have
\begin{equation}\label{eqn:l1-convex}
\lambda\|x\|_1 \geq \lambda\|x^\star(\lambda)\|_1 
+ \langle \lambda\xi, x-x^\star(\lambda) \rangle.
\end{equation}
Adding the two inequalities~(\ref{eqn:f-strong-convex}) 
and~(\ref{eqn:l1-convex}) and using~(\ref{eqn:kkt-condition}), we get
\begin{equation}\label{eqn:strong-convex}
\phi_\lambda(x) -\phi_\lambda(x^\star(\lambda)) 
\geq \frac{\rho_-(A,\bar{s}+2\tilde{s})}{2} \|x-x^\star(\lambda)\|_2^2,
\qquad \forall\, x:\|x_{\bar{S}^c}\|_0 \leq \tilde{s}.
\end{equation}

Since any accumulation point satisfies $\|x_{\bar{S}^c}\|_0 \leq \tilde{s}$,
we conclude that $x^\star(\lambda)$ is a unique accumulation point,
in other words, the limit, of the sequence
$\bigl\{x^{(k)}\bigr\}_{k=0}^\infty$.

Next we show that under the assumptions in Lemma~\ref{lem:local-sparse},
especially with $x^{(0)}$ satisfying~(\ref{eqn:obj-close}),
Algorithm~\ref{alg:prox-grad} has a geometric convergence rate.
We start with the stopping criteria in the line search procedure:
\begin{eqnarray*}
\phi_\lambda(x^{(k+1)})
&\leq& \psi_{\lambda,M_k}(x^{(k)}, x^{(k+1)}) \\
&\leq& \min_x \left\{ f(x) + \frac{M_k}{2} \|x-x^{(k)}\|_2^2
       + \lambda \|x\|_1 \right\} \\
&=& \min_x \left\{ \phi_\lambda(x) + \frac{M_k}{2} \|x-x^{(k)}\|_2^2 \right\}.
\end{eqnarray*}
where the second inequality follows from the convexity of~$f$.
We can further relax the right-hand side of the above inequality  
by restricting the minimization over the line segment
$x = \alpha x^\star(\lambda) + (1-\alpha)x^{(k)}$, where $\alpha\in[0,1]$. 
This leads to
\begin{eqnarray*}
\phi_\lambda(x^{(k+1)})
&\leq& \min_\alpha \left\{ \phi_\lambda\bigl(\alpha x^\star(\lambda)
       +(1-\alpha)x^{(k)} \bigr) 
       + \frac{M_k}{2} \|\alpha(x^{(k)}-x^\star(\lambda))\|_2^2 \right\} \\
&\leq& \min_\alpha \left\{ \alpha\phi_\lambda(x^\star(\lambda))
       +(1-\alpha)\phi_\lambda(x^{(k)}) 
       + \frac{\alpha^2 M_k}{2} \|x^{(k)}-x^\star(\lambda)\|_2^2 \right\} \\
&=& \min_\alpha \left\{ \phi_\lambda(x^{(k)}) 
       - \alpha \bigl( \phi_\lambda(x^{(k)})
       -\phi_\lambda(x^\star(\lambda)) \bigr)
       + \frac{\alpha^2 M_k}{2} \|x^{(k)}-x^\star(\lambda)\|_2^2 \right\} 
\end{eqnarray*}
Since the conclusion of Lemma~\ref{lem:local-sparse} implies that
$\|x^{(k)}_{\bar{S}^c}\|_0\leq \tilde{s}$ for all $k\geq 0$,
we can use the ``restricted'' strong convexity 
property~(\ref{eqn:strong-convex}) to obtain
\[
\phi_\lambda(x^{(k+1)}) \leq \min_\alpha \left\{ \phi_\lambda(x^{(k)}) 
- \alpha \left(1-\frac{\alpha M_k}{\rho_-(A,\bar{s}+2\tilde{s})} \right) 
\left(\phi_\lambda(x^{(k)})-\phi_\lambda(x^\star(\lambda))\right)\right\}.
\]
The minimizing value is $\alpha=\rho_-(A,\bar{s}+2\tilde{s})/(2M_k)$, 
which gives
\[
\phi_\lambda(x^{(k+1)}) ~\leq~ \phi_\lambda(x^{(k)}) 
- \frac{\rho_-(A,\bar{s}+2\tilde{s})}{4M_k}
\left(\phi_\lambda(x^{(k)})-\phi_\lambda(x^\star(\lambda))\right).
\]
Let $\phi_\lambda^\star = \phi_\lambda(x^\star(\lambda))$.
Subtracting $\phi_\lambda^\star$ from both side of the above inequality gives
\begin{eqnarray*}
\phi_\lambda(x^{(k+1)}) - \phi_\lambda^\star 
&\leq& \left( 1 - \frac{\rho_-(A,\bar{s}+2\tilde{s})}{4 M_k} \right)
    \left(\phi_\lambda(x^{(k)})-\phi_\lambda^\star\right) \\
&\leq& \left( 1 - \frac{\rho_-(A,\bar{s}+2\tilde{s})}{
    4 \gammaInc \rho_+(A,\bar{s}+2\tilde{s})} \right)
    \left(\phi_\lambda(x^{(k)})-\phi_\lambda^\star\right),
\end{eqnarray*}
where the second inequality follows from~(\ref{eqn:local-lipschitz}).
Therefore, we have
\[
\phi_\lambda(x^{(k)}) - \phi_\lambda^\star 
~\leq~ \left(1-\frac{1}{4\gammaInc\kappa} \right)^k 
\left(\phi_\lambda(x^{(0)})-\phi_\lambda^\star\right),
\] 
where
\[
\kappa=\frac{\rho_+(A,\bar{s}+2\tilde{s})}{\rho_-(A,\bar{s}+2\tilde{s})}
\]
is a restricted condition number.
Note that the above convergence rate does not depend on~$\lambda$.

\subsection{Proof of Theorem~\ref{thm:overall-complexity}}
\label{sec:overall-complexity}

In Algorithm~\ref{alg:homotopy}, $\hat{x}^{(K)}$ denotes 
an approximate solution for minimizing the function $\phi_{\lambda_K}$.
A key idea of the homotopy method is to 
use $\hat{x}^{(K)}$ as the starting point in the proximal gradient method 
for minimizing the next function $\phi_{\lambda_{K+1}}$.
The following lemma shows that if we choose the parameters~$\delta$ 
and~$\eta$ appropriately, then $\hat{x}^{(K)}$  satisfies the approximate
optimality condition for $\lambda_{K+1}$ that guarantees local 
geometric convergence.

\begin{lemma}\label{lem:starting-kkt}
Suppose $\hat x^{(K)}$ satisfies the approximate optimality condition
\[
\omega_{\lambda_K}(\hat x^{(K)}) \leq \delta \lambda_{K} 
\]
for some $\delta<\delta'$.
Let $\lambda_{K+1} = \eta \lambda_K$ for some $\eta$ that satisfies
\begin{equation}\label{eqn:small-eta}
\frac{1+\delta}{1+\delta'} \leq \eta < 1.
\end{equation}
Then we have
\[
\omega_{\lambda_{K+1}}(\hat x^{(K)}) \leq \delta' \lambda_{K+1} .
\]
\end{lemma}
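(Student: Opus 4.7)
The plan is to show that essentially the same subgradient that certifies approximate optimality at $\lambda_K$ also certifies it at $\lambda_{K+1}$, paying only a small price proportional to the change $\lambda_K - \lambda_{K+1}$.

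First, I would pick $\xi^\star \in \partial\|\hat x^{(K)}\|_1$ that attains the minimum in the definition of $\omega_{\lambda_K}(\hat x^{(K)})$, so that
\[
\bigl\|\nabla\!f(\hat x^{(K)}) + \lambda_K \xi^\star\bigr\|_\infty \leq \delta\lambda_K.
\]
Since the subdifferential $\partial\|\hat x^{(K)}\|_1$ does not depend on the regularization parameter, the same $\xi^\star$ is admissible for the residue at $\lambda_{K+1}$, giving
\[
\omega_{\lambda_{K+1}}(\hat x^{(K)}) \leq \bigl\|\nabla\!f(\hat x^{(K)}) + \lambda_{K+1} \xi^\star\bigr\|_\infty.
\]

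Next, I would write $\lambda_{K+1}\xi^\star = \lambda_K \xi^\star - (\lambda_K - \lambda_{K+1})\xi^\star$ and apply the triangle inequality, together with the fact that any element of $\partial\|\cdot\|_1$ satisfies $\|\xi^\star\|_\infty \leq 1$. This yields
\[
\omega_{\lambda_{K+1}}(\hat x^{(K)}) \leq \delta\lambda_K + (\lambda_K - \lambda_{K+1}) = \delta\lambda_K + \lambda_K(1-\eta).
\]

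Finally, using $\lambda_K = \lambda_{K+1}/\eta$, the right-hand side equals $\bigl((\delta + 1 - \eta)/\eta\bigr)\lambda_{K+1}$, and the condition $\eta \geq (1+\delta)/(1+\delta')$ rearranges exactly to $(\delta + 1 - \eta)/\eta \leq \delta'$, which finishes the proof. There is no real obstacle here; the lemma is essentially a one-line calculation, and the only thing worth being careful about is that the same subgradient is used for both residues and that $\|\xi^\star\|_\infty \leq 1$ is invoked to absorb the perturbation from changing $\lambda$.
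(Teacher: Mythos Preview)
Your proof is correct and follows exactly the same approach as the paper: choose a subgradient $\xi$ attaining (or witnessing) the bound $\omega_{\lambda_K}(\hat x^{(K)})\le\delta\lambda_K$, reuse it at $\lambda_{K+1}$, apply the triangle inequality together with $\|\xi\|_\infty\le 1$, and then verify that $\eta\ge(1+\delta)/(1+\delta')$ is equivalent to $\delta\lambda_K+(1-\eta)\lambda_K\le\delta'\lambda_{K+1}$. The only cosmetic difference is that you spell out the final algebraic rearrangement in terms of $(\delta+1-\eta)/\eta\le\delta'$, whereas the paper simply asserts it.
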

\begin{proof}
If $\omega_{\lambda_K}(\hat x^{(K)}) \leq \delta \lambda_{K}$, 
then there exists $\xi\in\partial\|\hat {x}^{(K)}\|_1$ such that 
$\left\| \nabla\! f(\hat {x}^{(K)})+\lambda_K\xi \right\|_\infty 
\leq \delta \lambda_K$.
Then we have
\begin{eqnarray*}
\omega_{\lambda_{K+1}}(\hat x^{(K)}) 
&\leq& \left\| \nabla\!f(\hat {x}^{(K)})+\lambda_{K+1}\xi \right\|_\infty \\
&=& \left\|\nabla\!f(\hat {x}^{(K)})+\lambda_{K}\xi 
    +(\lambda_{K+1} -\lambda_K)\xi \right\|_\infty \\
&\leq& \left\| \nabla\!f(\hat {x}^{(K)})+\lambda_K\xi \right\|_\infty 
   + |\lambda_{K+1} - \lambda_K| \cdot \|\xi\|_\infty \\
&\leq& \delta\lambda_K + (1-\eta)\lambda_K .
\end{eqnarray*}
Since the condition~(\ref{eqn:small-eta}) implies
$\delta\lambda_K + (1-\eta)\lambda_K \leq \delta'\lambda_{K+1}$,
we have the desired result.
\end{proof}

\begin{lemma}\label{lem:kkt-obj}
Assume that for some $x$ and $\lambda \geq \lambda_\mathrm{tgt}$,
\[
\omega_\lambda(x) \leq \delta' \lambda .
\]
Then for all $\lambda' \in [\lambda_\mathrm{tgt},\lambda]$, we have
\[
\phi_{\lambda'}(x) - \phi_{\lambda'}(x^\star(\lambda'))
\leq 
\frac{2(1+\gamma)(\lambda+\lambda')(\omega_\lambda(x)+\lambda-\lambda')\bar{s}}
     {\rho_-(A, \bar{s}+\tilde{s})}.
\]
 \end{lemma}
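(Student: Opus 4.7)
The plan is to exploit the fact that $\partial\|x\|_1$ does not depend on the regularization parameter, so an approximate optimality condition at $\lambda$ translates into an approximate optimality at any smaller $\lambda'$, losing only $\lambda-\lambda'$ in the infinity norm. Concretely, I would pick $\xi \in \partial\|x\|_1$ that attains the minimum in the definition of $\omega_\lambda(x)$, so that $\|\nabla\!f(x) + \lambda\xi\|_\infty \leq \omega_\lambda(x)$. Writing $\nabla\!f(x) + \lambda'\xi = (\nabla\!f(x) + \lambda\xi) - (\lambda-\lambda')\xi$ and using $\|\xi\|_\infty \leq 1$ yields $\|\nabla\!f(x) + \lambda'\xi\|_\infty \leq \omega_\lambda(x) + (\lambda-\lambda')$. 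This same vector is a subgradient of $\phi_{\lambda'}$ at $x$ because $\xi$ is independent of the $\lambda$-factor, so convexity of $\phi_{\lambda'}$ gives
\[
\phi_{\lambda'}(x) - \phi_{\lambda'}(x^\star(\lambda'))
\;\leq\; \bigl(\omega_\lambda(x) + \lambda-\lambda'\bigr)\,\|x - x^\star(\lambda')\|_1 .
\]

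Next, I would bound $\|x - x^\star(\lambda')\|_1 \leq \|x - \bar{x}\|_1 + \|x^\star(\lambda') - \bar{x}\|_1$ by two separate invocations of Lemma~\ref{lem:kkt-consequences}, at parameters $\lambda$ and $\lambda'$ respectively. For the first term, $x$ satisfies $\omega_\lambda(x) \leq \delta'\lambda$ by hypothesis and, in the context this lemma will be used, is $\tilde{s}$-sparse on $\bar{S}^c$ thanks to the iterate-sparsity results of Section~\ref{sec:sparse-path}; chaining (\ref{eqn:l1-bound}) with Cauchy--Schwarz on $\bar{S}$ and then (\ref{eqn:l2-bound}) gives $\|x - \bar{x}\|_1 \leq (1+\gamma)\sqrt{\bar{s}}\,\|x - \bar{x}\|_2 \leq 2(1+\gamma)\lambda\bar{s}/\rho_-(A,\bar{s}+\tilde{s})$. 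For the second term, $x^\star(\lambda')$ satisfies $\omega_{\lambda'}(x^\star(\lambda')) = 0 \leq \delta'\lambda'$ exactly, and under Assumption~\ref{asmp:mixed-re} the cited result of Zhang--Huang guarantees $\|x^\star(\lambda')_{\bar{S}^c}\|_0 \leq \tilde{s}$; the same chain at parameter $\lambda'$ yields $\|x^\star(\lambda') - \bar{x}\|_1 \leq 2(1+\gamma)\lambda'\bar{s}/\rho_-(A,\bar{s}+\tilde{s})$.

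Adding the two bounds produces $\|x - x^\star(\lambda')\|_1 \leq 2(1+\gamma)(\lambda+\lambda')\bar{s}/\rho_-(A,\bar{s}+\tilde{s})$, and multiplying by the factor $\omega_\lambda(x) + \lambda-\lambda'$ from the first paragraph recovers exactly the claimed inequality. The only genuine obstacle is the bookkeeping around sparsity: Lemma~\ref{lem:kkt-consequences} must be invoked at \emph{two different} regularization parameters, and each invocation demands that the relevant vector be $\tilde{s}$-sparse outside $\bar{S}$. For $x^\star(\lambda')$ this sparsity is supplied by Assumption~\ref{asmp:mixed-re} and the exact minimizer result; for $x$ itself it is inherited from the homotopy-path analysis in Section~\ref{sec:sparse-path}. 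Once both sparsity facts are in hand, every remaining step is a triangle inequality or a direct substitution, and no further machinery is needed.
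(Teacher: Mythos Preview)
Your proposal is correct and follows essentially the same route as the paper's proof: pick the optimal $\xi$ for $\omega_\lambda$, use convexity of $\phi_{\lambda'}$ together with $\|\xi\|_\infty\le 1$ to get the factor $\omega_\lambda(x)+\lambda-\lambda'$, then bound $\|x-x^\star(\lambda')\|_1$ via the triangle inequality through $\bar x$ and two applications of Lemma~\ref{lem:kkt-consequences}. You are also right to flag the sparsity bookkeeping for $x$ and $x^\star(\lambda')$ as the only subtle point; the paper's proof applies Lemma~\ref{lem:kkt-consequences} without restating those hypotheses, relying (as you do) on the surrounding homotopy-path analysis and the Zhang--Huang result to supply them.
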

\begin{proof}
Let $\xi(\lambda) = \argmin_{\xi\in\partial\|x\|_1} 
\left\| \nabla\!f({x})+\lambda\xi \right\|_\infty$.
Thus
$\omega_\lambda(x)=\left\|\nabla\!f({x})+\lambda\xi(\lambda) \right\|_\infty$.
By the convexity of $\phi_{\lambda'}$, we have
\begin{eqnarray}
\phi_{\lambda'}(x) - \phi_{\lambda'}(x^\star(\lambda'))
&\leq& \langle \nabla\!f({x})+\lambda'\xi(\lambda), x -x^\star(\lambda')\rangle \nonumber\\
&\leq& (\|\nabla\!f({x}) + \lambda\xi(\lambda)\|_\infty + \lambda-\lambda')
\|{x}-x^\star(\lambda')\|_1 \nonumber\\
&=& (\omega_\lambda(x) + \lambda-\lambda') \, \|{x}-x^\star(\lambda')\|_1 .
\label{eqn:obj-kkt-L1-bound}
\end{eqnarray}
By Lemma~\ref{lem:kkt-consequences}, we have
\[
\|\bar{x}-x^\star(\lambda')\|_1 \leq 
(1+\gamma) \sqrt{\bar{s}}\, \|\bar{x}-x^\star(\lambda')\|_2
\leq 
\frac{2(1+\gamma)\lambda'\bar{s}}{\rho_-(A, \bar{s}+\tilde{s})}
\]
and
\[
\|\bar{x}-{x}\|_1 \leq 
(1+\gamma) \sqrt{\bar{s}} \, \|\bar{x}-{x}\|_2
\leq 
\frac{2(1+\gamma)\lambda\bar{s}}{\rho_-(A, \bar{s}+\tilde{s})} .
\]
Therefore, we have
\[
\|{x}-x^\star(\lambda')\|_1 \leq 
\|\bar{x}-{x}\|_1 + \|\bar{x}-x^\star(\lambda')\|_1 \leq 
\frac{2(1+\gamma)(\lambda+\lambda')\bar{s}}{\rho_-(A, \bar{s}+\tilde{s})} .
\]
Now we obtain from (\ref{eqn:obj-kkt-L1-bound}) that
\[
\phi_{\lambda'}({x}) - \phi_{\lambda'}(x^\star(\lambda'))
~\leq~ 
\frac{2 (1+\gamma)(\lambda+\lambda') (\omega_\lambda(x)+\lambda-\lambda')
      \bar{s}}{\rho_-(A, \bar{s}+\tilde{s})} .
\]
This proves the desired result.
\end{proof}

Now we are ready to give an estimate of the overall complexity of
the homotopy method.
First, we need to bound the number of iterations within each call
of Algorithm~\ref{alg:prox-grad}.

Using Lemma~\ref{lem:kkt-grad-mapping}, we can upper bound the
measure for approximate optimality as
\begin{eqnarray*}
\omega_\lambda(x^{(k+1)})
&\leq& \left( 1 + \frac{S_{M_k}(x^{(k)})}{M_k} \right) 
    \bigl\| g_{\lambda,M_k}(x^{(k)}) \bigr\|_2 \\
&\leq& \left( 1 + \frac{\rho_+(A,\bar{s}+2\tilde{s})}{
    \rho_-(A,\bar{s}+2\tilde{s})} \right) 
    \bigl\| g_{\lambda,M_k}(x^{(k)}) \bigr\|_2 \\
&=& (1+\kappa) \bigl\| g_{\lambda,M_k}(x^{(k)}) \bigr\|_2 ,
\end{eqnarray*}
where the second inequality follows from 
\[
S_{M_k}(x^{(k)}) \leq \rho_+(A,\bar{s}+2\tilde{s}), \qquad
M_k \geq \rho_-(A,\bar{s}+2\tilde{s}) ,
\]
which are direct consequences of the line-search termination criterion, 
the restricted smoothness property~(\ref{eqn:restr-smoothness}) and
the restricted strong convexity property~(\ref{eqn:restr-strong-convex}).

In order to bound the norm of $g_{\lambda,M_k}(x^{(k)})$,
we use the inequality~(\ref{eqn:monotone-decrease}) and 
Theorem~\ref{thm:geometric-rate} to obtain
\begin{eqnarray*}
\bigl\|g_{\lambda,M_k}(x^{(k)}) \bigr\|_2^2 
&\leq& 2 M_k \left(\phi_\lambda(x^{(k)}) - \phi_\lambda(x^{(k+1)}) \right) \\
&\leq& 2 M_k \left(\phi_\lambda(x^{(k)}) - \phi_\lambda^\star\right) \\
&\leq& 2 \gammaInc\,\rho_+(A,\bar{s}+2\tilde{s}) 
       \left(1-\frac{1}{4\gammaInc\kappa} \right)^k 
       \left(\phi_\lambda(x^{(0)})-\phi_\lambda^\star\right),
\end{eqnarray*}
where 
$\phi_\lambda^\star = \phi_\lambda(x^\star(\lambda)) = \min_x \phi_\lambda(x)$.
Therefore, in order to satisfy the stopping criteria
\[
\omega_\lambda(x^{(k+1)}) ~\leq~ \delta \lambda,
\]
it suffices to ensure
\[
( 1 + \kappa ) \sqrt{2\gammaInc \rho_+(A,\bar{s}+2\tilde{s})
\left(1-\frac{1}{4\gammaInc\kappa} \right)^k 
\left(\phi_\lambda(x^{(0)})-\phi_\lambda^\star\right)} 
~\leq~ \delta \lambda,
\]
which requires
\[
k ~\geq~ \ln\left( 
\frac{2 \gammaInc (1+\kappa)^2 \rho_+(A,\bar{s}+2\tilde{s}) 
}{\delta^2 \lambda^2} 
\left( \phi_\lambda(x^{(0)}) - \phi_\lambda^\star \right)
\right) \Bigg/
\ln\left( 1 - \frac{1}{4\gammaInc \kappa}\right)^{-1}.
\]

We still need to bound the gap $\phi_\lambda(x^{(0)})-\phi_\lambda^\star$. 
Since Lemma~\ref{lem:starting-kkt} implies that
$\omega_\lambda(x^{(0)}) \leq \delta' \lambda$, we can obtain directly from
Lemma~\ref{lem:kkt-obj} the following inequality by setting $\lambda'=\lambda$ and $x=x^{(0)}$:
\[
\phi_\lambda(x^{(0)})-\phi_\lambda^\star
\leq \frac{4 (1+\gamma) \lambda^2 \bar{s}}{\rho_-(A, \bar{s}+\tilde{s})}.
\]
Therefore, the number of iterations in each call of 
Algorithm~\ref{alg:prox-grad} is no more than
\[
\ln\left( \frac{8\gammaInc (1+\kappa)^2 (1+\gamma) \bar{s}}{\delta^2}
\frac{ \rho_+(A,\bar{s}+2\tilde{s}) }{\rho_-(A,\bar{s}+\tilde{s})} 
\right) \Bigg/
\ln\left( 1 - \frac{1}{4\gammaInc \kappa}\right)^{-1}.
\]
To simplify presentation, we note that
\[
C ~=~ 8 \gammaInc (1+\kappa)^2 (1+\gamma) \bar{s} \kappa
~\geq~ 8 \gammaInc (1+\kappa)^2 (1+\gamma) \bar{s}
\frac{\rho_+(A,\bar{s}+2\tilde{s})}{\rho_-(A,\bar{s}+\tilde{s})} . 
\]
Thus the previous iteration bound is no more than
\[
\ln\left( \frac{C}{\delta^2} \right) \Bigg/
\ln\left( 1 - \frac{1}{4\gammaInc \kappa}\right)^{-1}.
\]
This proves Part~1 of Theorem~\ref{thm:overall-complexity}.
We note that this bound is independent of $\lambda$.

In the homotopy method (Algorithm~\ref{alg:homotopy}), 
after $K$ outer iterations for $K \leq N-1$, we have
from Lemma~\ref{lem:starting-kkt} that
$\omega_{\lambda_{K+1}}(\hat x^{(K)}) \leq \delta' \lambda_{K+1}$. 
The sparse recovery performance bound
\[
\|\hat x^{(K)} - \bar{x}\|_2 \leq 2 \eta^{K+1} \lambda_0 \sqrt{\bar{s}}/\rho_-(A,\bar{s}+\tilde{s}) 
\]
follows directly from Lemma~\ref{lem:kkt-consequences} and $\lambda_{K+1}=\eta^{K+1}\lambda_0$.
Moreover, from Lemma~\ref{lem:kkt-obj} with $\lambda'=\lambda_\mathrm{tgt}$, 
$\lambda=\lambda_{K+1}$, and $x=\hat x^{(K)}$, we obtain
\[
\phi_{\lambda_\mathrm{tgt}}(\hat x^{(K)}) - \phi_{\lambda_\mathrm{tgt}}^\star
\leq 
\frac{4.5 (1+\gamma) \lambda_{K+1}^2 \bar{s}}{\rho_-(A, \bar{s}+\tilde{s})}
= \eta^{2(K+1)} \frac{4.5 (1+\gamma) \lambda_{0}^2 \bar{s}}
       {\rho_-(A, \bar{s}+\tilde{s})}.
\]
This proves Part~2 of Theorem~\ref{thm:overall-complexity}.

In Algorithm~\ref{alg:homotopy}, the number of outer iterations,
excluding the last one for $\lambda_\mathrm{tgt}$, is
\[
N = \left\lfloor
\frac{\ln(\lambda_0/\lambda_\mathrm{tgt})}{\ln(1/\eta)}
\right\rfloor.
\]
The last iteration for $\lambda_\mathrm{tgt}$ uses an absolute precision
$\epsilon$ instead of the relative precision $\delta\lambda_\mathrm{tgt}$.
Therefore, the overall complexity is bounded by
\[
\left(
\frac{\ln(\lambda_0/\lambda_\mathrm{tgt})}{\ln(1/\eta)}
\ln \left( \frac{ C }{\delta^2}\right) 
+ \ln \max\left(1, \frac{ \lambda_\mathrm{tgt}^2 C }{\epsilon^2}\right) 
\right) \Bigg/
\ln\left( 1 - \frac{1}{4\gammaInc \kappa}\right)^{-1} .
\]
Finally, when the PGH method terminates, we have 
$\omega_{\lambda_\mathrm{tgt}}(\hat x^\mathrm{(tgt)}) \leq \epsilon$.
Therefore we can apply Lemma~\ref{lem:kkt-obj} with 
$\lambda=\lambda'=\lambda_\mathrm{tgt}$ and $x=\hat{x}^\mathrm{(tgt)}$ 
to obtain the last desired bound in Part~3.

\section{Numerical experiments}
\label{sec:experiments}

In this section, we present numerical experiments to supports our theoretical
analysis. 
First, we illustrate the numerical properties of the PGH method by comparing
it with several other methods. 
More specifically, we implemented the following methods for solving the 
$\ell_1$-LS problem:
\begin{itemize}
\item PG:  Nesterov's proximal gradient method with adaptive line search
           (Algorithm~\ref{alg:prox-grad}).
\item PGH: our proposed PGH method described in Algorithm~\ref{alg:homotopy}.
\item ADG: Nesterov's accelerated dual gradient method, 
           i.e., Algorithm~(4.9) in \cite{Nesterov07composite}.
\item ADGH: the PGH method in Algorithm~\ref{alg:homotopy}, but with
PG replaced by ADG.
\end{itemize}

We generated a random instance of~(\ref{eqn:l1-LS}) with dimensions 
$m=1000$ and $n=5000$.
The entries of the matrix~$A\in\reals^{m\times n}$ are generated independently
with the uniform distribution over the interval $[-1,+1]$.
The vector~$\bar{x}\in\reals^n$ was generated with the same distribution
at $100$ randomly chosen coordinates (i.e., $\bar{s}=|\supp(\bar{x})|=100$).
The noise~$z\in\reals^m$ is a dense vector with independent random entries
with the uniform distribution over the interval $[-\sigma, \sigma]$, 
where $\sigma$ is the noise magnitude.
Finally the vector $b$ was obtained as $b=A\bar{x}+z$.
In our first experiment, we set $\sigma=0.01$ and choose 
$\lambda_\mathrm{tgt}=1$. 
For this particular instance we have roughly $\|A^T z\|_\infty 0.411$.
To start the PGH method, we have $\lambda_0=\|A^Tb\|_\infty=483.4$.

\begin{figure}[p]
\centering
\psfrag{PG}{\small \hspace{1pt}~~PG}
\psfrag{PGH}{\small \hspace{1pt}~~PGH}
\psfrag{ADG}{\small ADG}
\psfrag{ADGHHH}{\small ADGH}
\psfrag{ADGHH}{\small ADGH}
\psfrag{k}[tc][tc]{$k$}
\psfrag{lambda}[tc][tc]{$\lambda_0/\lambda_K$}
\psfrag{nAx}[tc][tc]{$A$ or $A^T$ multiplications}
\psfrag{Obj}[bc]{$\phi_\lambda(x^{(k)})-\phi_{\lambda_\mathrm{tgt}}^\star$}
\psfrag{OAx}[bc]{$\phi_\lambda(x^{(k)})-\phi_{\lambda_\mathrm{tgt}}^\star$}
\psfrag{NNZ}[bc]{$\|x^{(k)}\|_0$}
\psfrag{Lip}[bc]{$M_k$}
\psfrag{KKT}[bc]{$\omega_\lambda(x^{(k)})$}
\psfrag{Itr}[bc]{}
\vspace{-2ex}
\subfloat[Objective gap.]{
\includegraphics[width=0.45\textwidth]{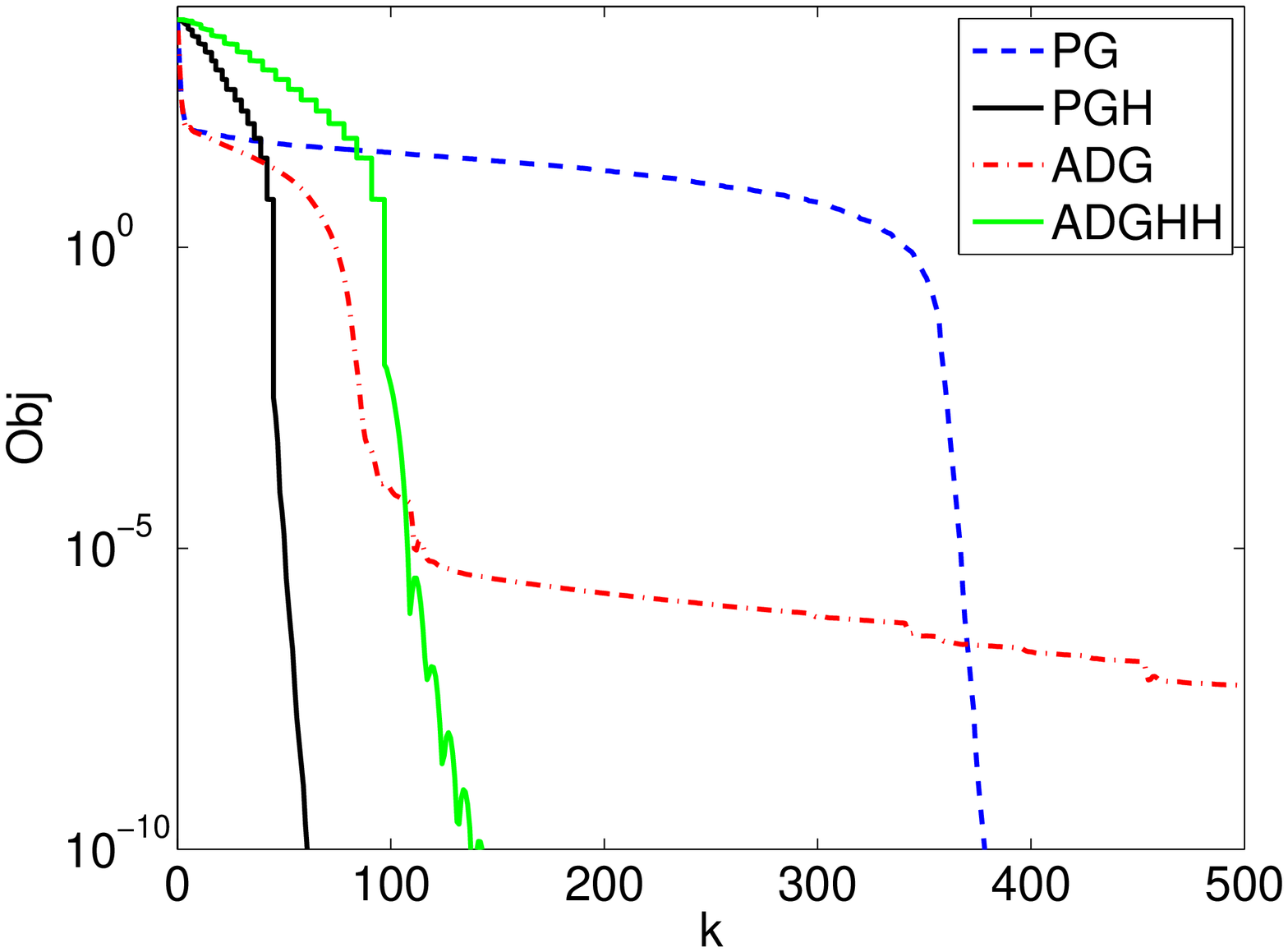}
\label{fig:pgh-obj}}
\hfill
\subfloat[Sparsity along solution path.]{
\includegraphics[width=0.45\textwidth]{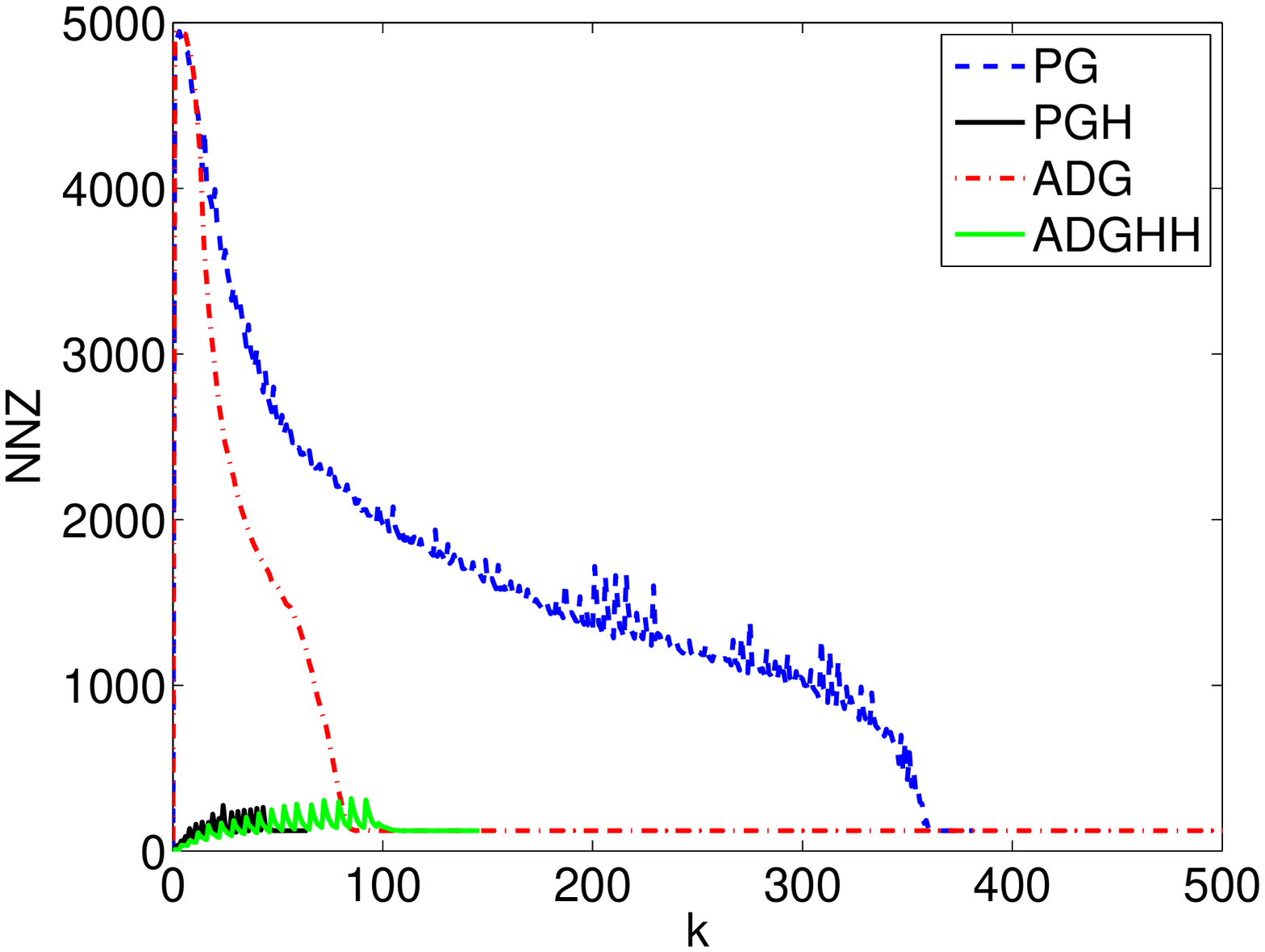}
\label{fig:pgh-nnz}} \\
\subfloat[Optimality residues.]{
\includegraphics[width=0.45\textwidth]{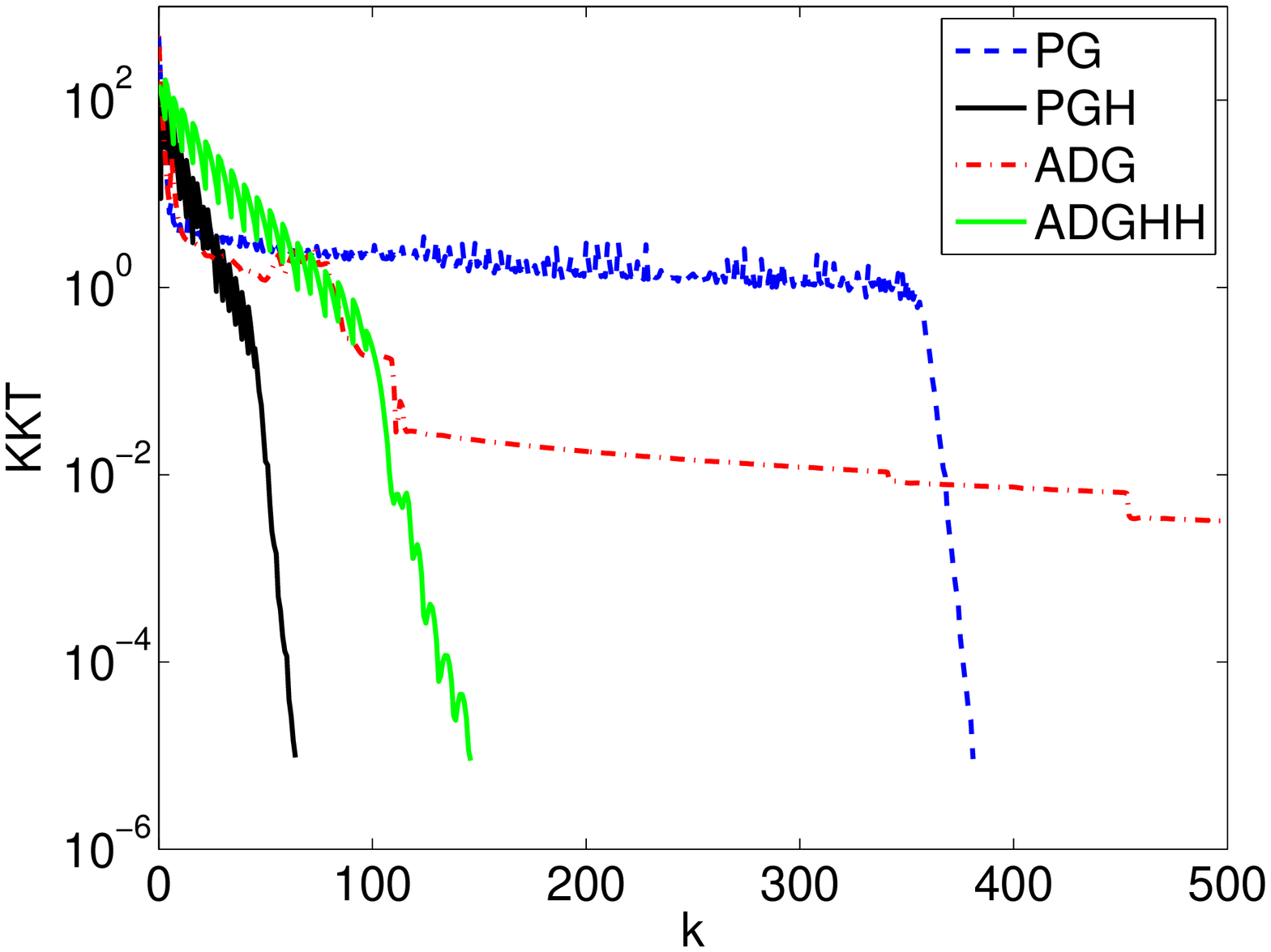}
\label{fig:pgh-res}}
\hfill
\subfloat[Line search results.]{
\includegraphics[width=0.45\textwidth]{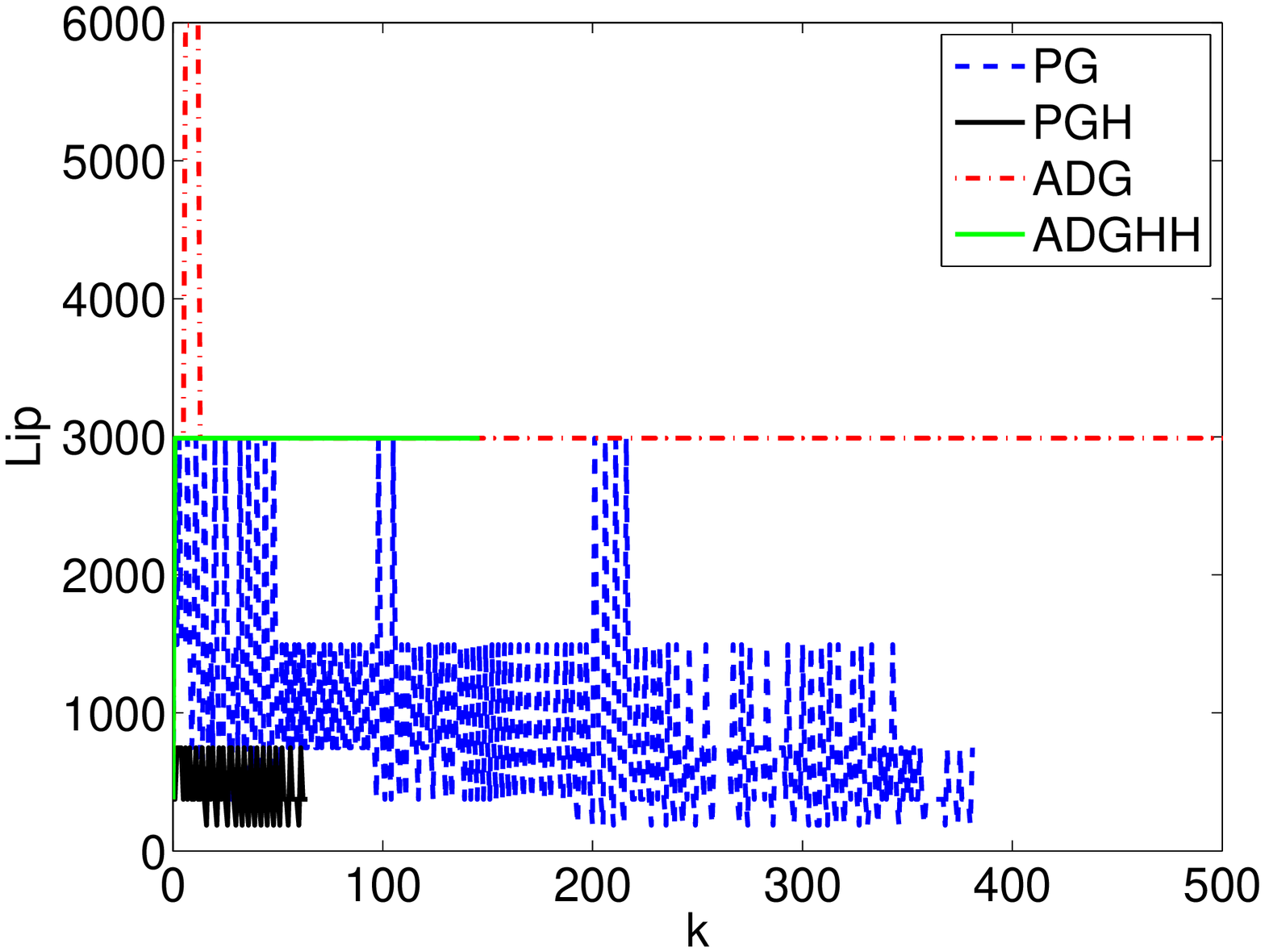}
\label{fig:pgh-Lip}} \\
~\subfloat[Number of iterations for each $\lambda_K$.]{
\includegraphics[width=0.43\textwidth]{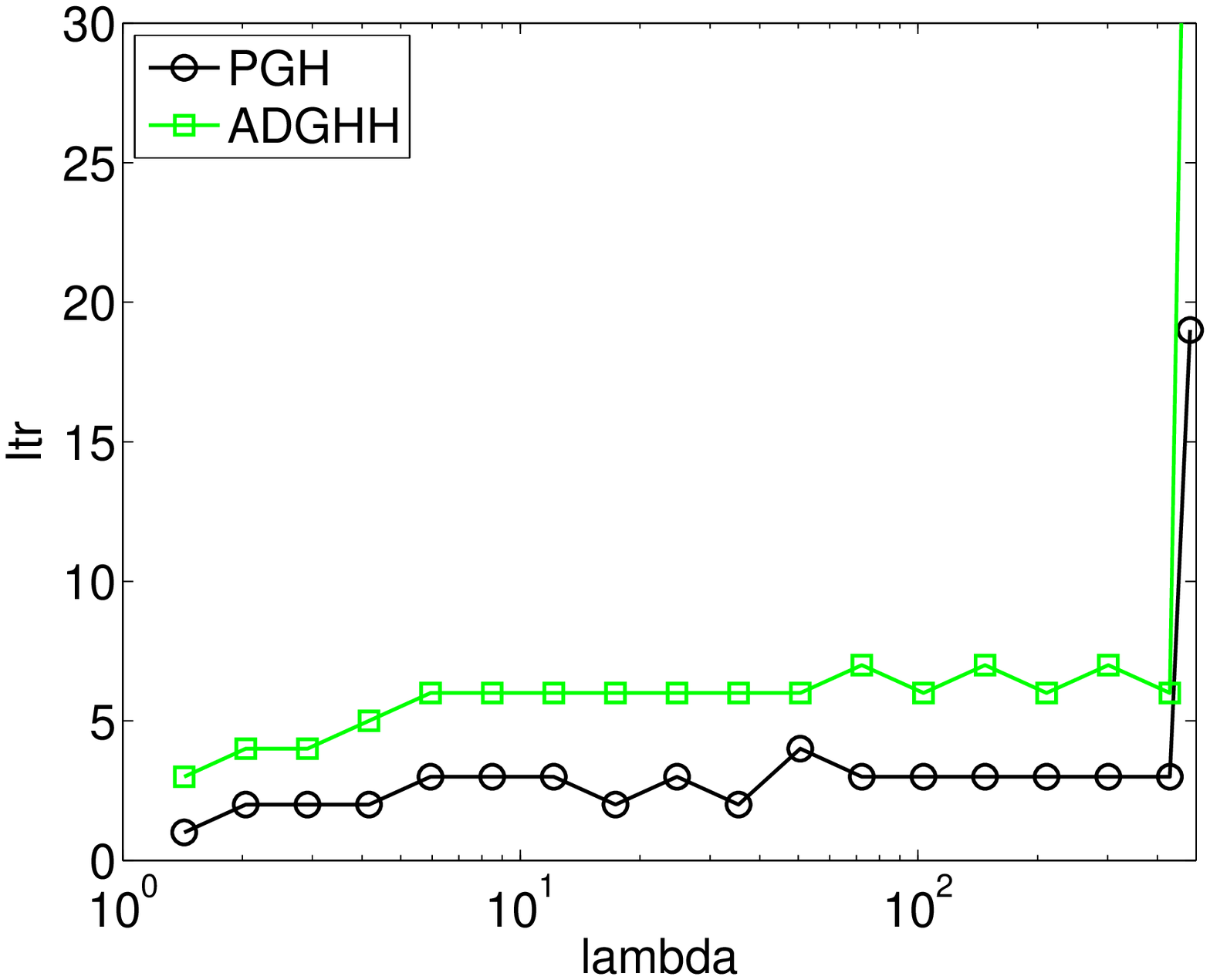}
\label{fig:pgh-itr}}
\hfill
\subfloat[Number of matrix-vector multiplications.]{
\includegraphics[width=0.45\textwidth]{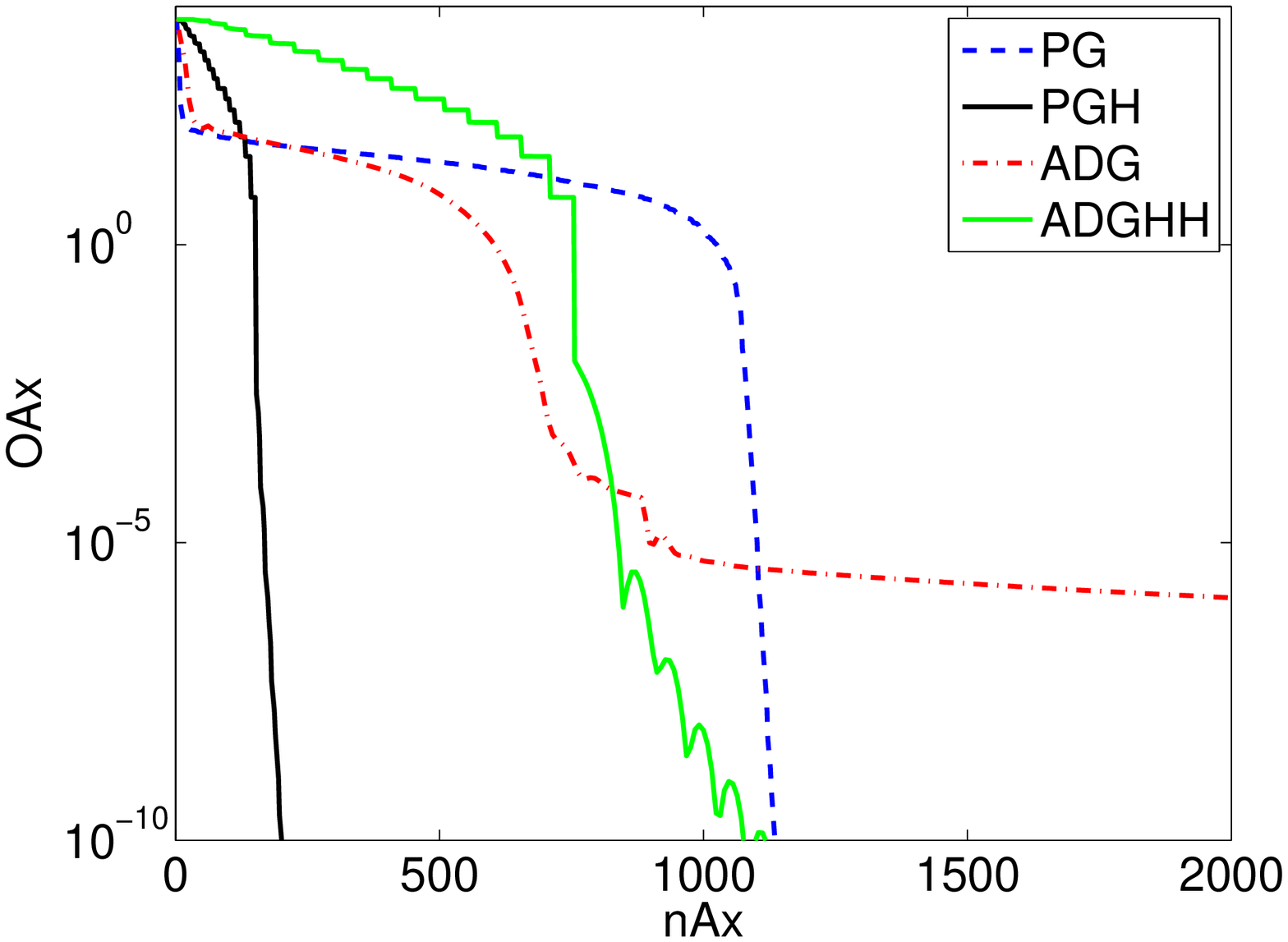}
\label{fig:pgh-mvc}}
\caption{Solving a random instance of the $\ell_1$-LS problem.
Problem sizes: $m=1000$, $n=5000$, $\bar{s}=100$, and $\lambda_\mathrm{tgt}=1$.
Entries of $A\in\reals^{m\times n}$ were generated with independent
uniform distributions over $[-1,+1]$, and $\|z\|_\infty=0.01$.
Algorithmic parameters: 
$\gammaInc=2$, $\gammaDec=2$, $\eta=0.7$, and $\delta=0.2$.}
\label{fig:pgh}
\end{figure}

Figure~\ref{fig:pgh} illustrates various numerical properties
of the four different methods for solving this random instance.
We used the parameters $\gammaInc=2$ and $\gammaDec=2$ in all four methods.
For the two homotopy methods (whose acronyms end with the letter~H), 
we used the parameters $\eta=0.7$ and $\delta=0.2$.
In the first four subfigures~\subref{fig:pgh-obj}-\subref{fig:pgh-Lip}, 
the horizontal axes show the cumulative count of inner iterations 
(total number of proximal-gradient steps).
For the two homotopy methods, the vertical line segments in the 
subfigures~\subref{fig:pgh-obj}, \subref{fig:pgh-res} and~\subref{fig:pgh-mvc}
indicate switchings of homotopy stages 
(when the value of~$\lambda$ is reduced by the factor~$\eta$) --- 
they reflect the change of objective function or the optimality residue
for the same vector~$x^{(k)}$.

Figure~\ref{fig:pgh}\subref{fig:pgh-obj} shows the objective gap 
$\phi_\lambda(x^{(k)})-\phi_{\lambda_\mathrm{tgt}}^\star$ versus the total
number of iterations~$k$.
The PG method solves the problem with the target regularization parameter
$\lambda_\mathrm{tgt}$ directly. 
For the first 350 or so iterations, it demonstrated a slow sublinear 
convergence rate (theoretically $O(1/k)$), but converged rapidly for the last
30 iterations with a linear rate.
Referring to Figure~\ref{fig:pgh}\subref{fig:pgh-nnz}, we see that the slow 
convergence phase of PG is associated with relatively dense iterates 
(with $\|x^{(k)}\|_0$ ranging from 5,000 to several hundreds), while the fast
linear convergence in the end coincides with sparse iterates with
$\|x^{(k)}\|_0$ around one hundred. 
In contrast, the PGH method maintains sparse iterates (always less than 300)
along the whole solution path, and demonstrates geometric convergence
at each stage of homotopy continuation. 

Figure~\ref{fig:pgh}\subref{fig:pgh-res} shows the optimality residues of 
different methods versus the number of iterations~$k$. 
They demonstrate similar trends as the objective function gap, but clearly they
oscillate along the solution path and do not decrease monotonically.
Figure~\ref{fig:pgh}\subref{fig:pgh-Lip} plots the local Lipschitz constants
returned by the line search procedure at each iteration.
We see that the adaptive line-search method settles with much smaller $M_k$ 
when the iterates are sparse.
There is a striking similarity between the final stages of the PG method 
and the PGH method. 
However, the PGH method avoids the slow sublinear convergence by
maintaining sparse iterates along its whole solution path.

Also plotted in Figure~\ref{fig:pgh} are numerical characteristics of the 
ADG and ADGH methods. 
We see that the ADG method is much faster than the PG method in the early
phase, which can be explained by its better convergence rate, i.e., 
$O(1/k^2)$ instead of $O(1/k)$ for PG. 
However, it stays with the sublinear rate even when the iterates $x^{(k)}$
becomes very sparse. The reason is that ADG cannot automatically exploit the 
local strong convexity as PG does, so it eventually lagged behind when the 
iterates became very sparse 
(see discussions in \cite{Nesterov07composite}). 
In the method ADGH, we combine the homotopy continuation strategy with the 
ADG method. 
It improves a lot compared with ADG, but still does not have  
linear convergence and thus is much slower than the PGH method. 

Figure~\ref{fig:pgh}\subref{fig:pgh-itr} shows the number of proximal-gradient 
steps performed at each homotopy stage (corresponding to each~$\lambda_K$) 
of the two homotopy methods.
We see that the final stage of the PGH method took 19 inner iterations to 
reach the absolute precision $\epsilon=10^{-5}$, 
and all earlier stages took only 1 to 4 inner iterations to reach the relative
precision $\delta\lambda_K$.
We note that the number of inner iterations at each intermediate stage
stayed relatively constant, even though the tolerance for the optimality
residue decreases as $\delta\lambda_k = \eta^K \delta \lambda_0$. 
This is predicted by Part~1 of Theorem~\ref{thm:overall-complexity}.
The ADGH method, which employs the ADG method for solving
each stage, took more number of inner iterations at each stage.
This again reflects its lack of capability of exploiting the restricted 
strong convexity.

The number of inner iterations is not the whole story for evaluating the
performance of the algorithms. 
Figure~\ref{fig:pgh}\subref{fig:pgh-mvc} shows the objective gap versus 
the total number of matrix-vector multiplications with either~$A$ or~$A^T$.
Evaluating the objective function $f(x^{(k)})$ costs one matrix-vector 
multiplication, and evaluating the gradient $\nabla\!f(x^{(k)})$ costs an 
additional multiplication. The estimate in~(\ref{eqn:line-search-bound}) 
states that each proximal-gradient step in the PG method needs on average 
two calls of the oracle. But one of them is done in the line search procedure,
and it requires only the function value.
Therefore each inner iteration on average costs roughly three matrix-vector 
multiplications. 
On the other hand, each iteration of the ADG method on average costs eight
matrix-vector multiplications \cite{Nesterov07composite}.
These factors are confirmed by comparing the horizontal scales of
the Figures~\ref{fig:pgh}\subref{fig:pgh-obj} 
and~\ref{fig:pgh}\subref{fig:pgh-mvc}.
We found that the number of matrix-vector multiplications is a very
precise indicator for the running time of each algorithm.
From this perspective, the advantage of the PGH method is more pronounced.

\begin{figure}[t]
\centering
\psfrag{DELTA=1}{\small $\delta=0.1$}
\psfrag{DELTA=2}{\small $\delta=0.2$}
\psfrag{DELTA=3}{\small $\delta=0.8$}
\psfrag{k}[tc][tc]{$k$}
\psfrag{Obj}[bc]{$\phi_\lambda(x^{(k)})-\phi_{\lambda_\mathrm{tgt}}^\star$}
\psfrag{NNZ}[bc]{$\|x^{(k)}\|_0$}
\psfrag{tObj}[bc]{(a) Objective gap}
\psfrag{tNNZ}[bc]{(b) Number of non-zeros}
\vspace{-2ex}
\subfloat[Objective gap.]{
\includegraphics[width=0.45\textwidth]{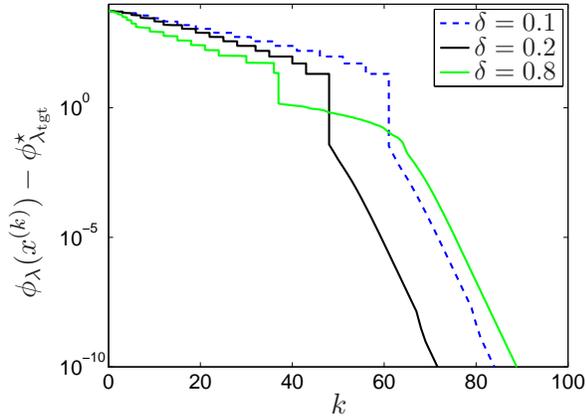}
\label{fig:delta-obj}}
\hfill
\subfloat[Sparsity along solution path.]{
\includegraphics[width=0.45\textwidth]{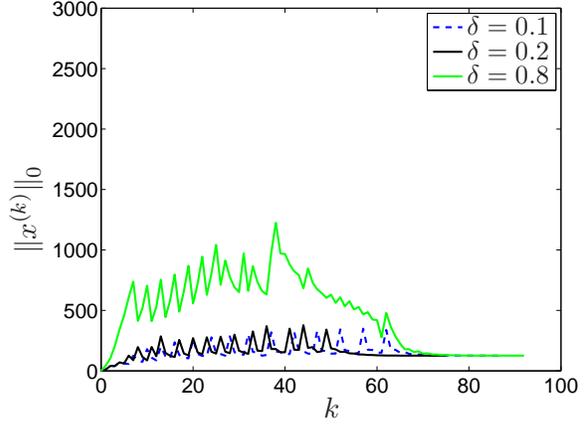}
\label{fig:delta-nnz}}
\caption{Performance of the PGH method by varying $\delta$  
while keeping $\eta=0.7$.}
\label{fig:delta}
\end{figure}

\begin{figure}[t]
\centering
\psfrag{ETA===1}{\small $\eta=0.2$}
\psfrag{ETA===2}{\small $\eta=0.5$}
\psfrag{ETA===3}{\small $\eta=0.8$}
\psfrag{k}[tc][tc]{$k$}
\psfrag{Obj}[bc]{$\phi_\lambda(x^{(k)})-\phi_{\lambda_\mathrm{tgt}}^\star$}
\psfrag{NNZ}[bc]{$\|x^{(k)}\|_0$}
\psfrag{tObj}[bc]{(a) Objective gap}
\psfrag{tNNZ}[bc]{(b) Number of non-zeros}
\subfloat[Objective gap.]{
\includegraphics[width=0.45\textwidth]{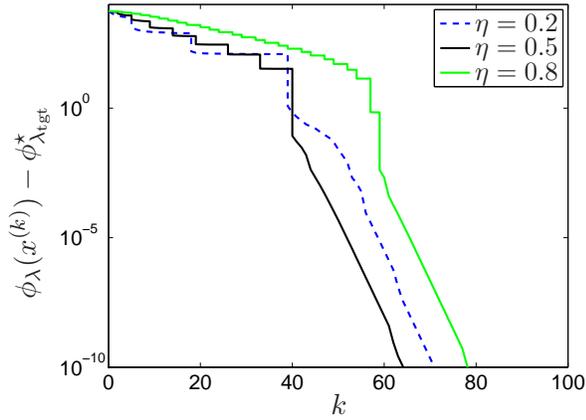}
\label{fig:eta-obj}}
\hfill
\subfloat[Sparsity along solution path.]{
\includegraphics[width=0.45\textwidth]{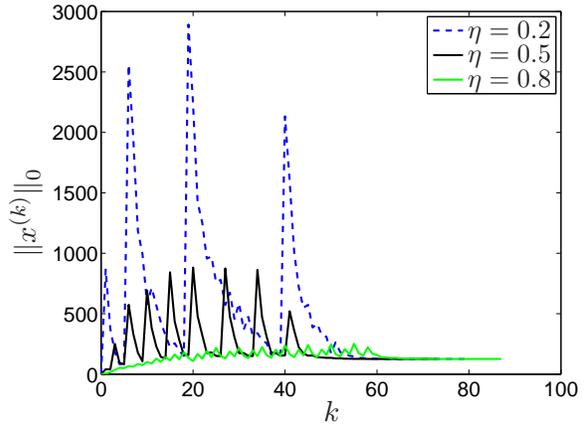}
\label{fig:eta-nnz}}
\caption{Performance of the PGH method by varying $\eta$ 
while keeping $\delta=0.2$.}
\label{fig:eta}
\end{figure}

Next we conducted experiments to test the sensitivity of the PGH method
with respect to the choices of parameters~$\delta$ and~$\eta$.
Figure~\ref{fig:delta} shows the objective gap and sparsity of
the iterates along the solution path for different~$\delta$ while
keeping $\eta=0.7$.
We see that when $\delta$ is reduced from~$0.2$ to~$0.1$, the iterates
became slightly more sparse, hence the convergence rate at each stage can be 
slightly faster due to better conditioning. 
However, this was countered by more iterations at each stage required by
reaching more stringent precision, and the overall number of proximal-gradient
steps increased.
On the other hand, increasing~$\delta$ to~$0.8$ made the intermediate stages
faster by requiring loose precision. 
However, this comes at the cost of less sparse iterates, and the final stage
suffers a slow sublinear convergence in the beginning.

Figure~\ref{fig:eta} shows the numerical behaviors of the PGH method
by varying~$\eta$ while keeping $\delta=0.2$.
We see relatively big variations of the sparsity of the iterates, but these
did not affect much of the overall iteration count.
The intermediate stages may suffer from slow convergence with less sparsity,
but they only need to be solved to a very rough precision.
It is more important to start the last stage with a sparse vector and
enjoy the fast convergence to the final precision.
It is interesting to note that the sufficient conditions
$(1+\delta)/(1+\delta')\leq\eta<1$ (in Theorem~\ref{thm:overall-complexity})
and $0<\delta<\delta'<1$ implies $\eta>0.5$.
But we see that a more aggressive $\eta=0.2$ still works well for this instance.

\subsection{Comparison with SpaRSA and FPC}

\begin{figure}[t]
\centering
\psfrag{PGH eta === 0.7}{\small PGH $\eta=0.7$}
\psfrag{PGH eta === 0.2}{\small PGH $\eta=0.2$}
\psfrag{SpaRSA}{\small SpaRSA-MC}
\psfrag{FPC}{\small FPC-BB}
\psfrag{k}[tc][tc]{$k$}
\psfrag{lambda}[tc][tc]{$\lambda_0/\lambda_K$}
\psfrag{Obj}[bc]{$\phi_\lambda(x^{(k)})-\phi_{\lambda_\mathrm{tgt}}^\star$}
\psfrag{itr}[bc]{}
\vspace{-2ex}
\subfloat[Objective gap.]{
\includegraphics[width=0.46\textwidth]{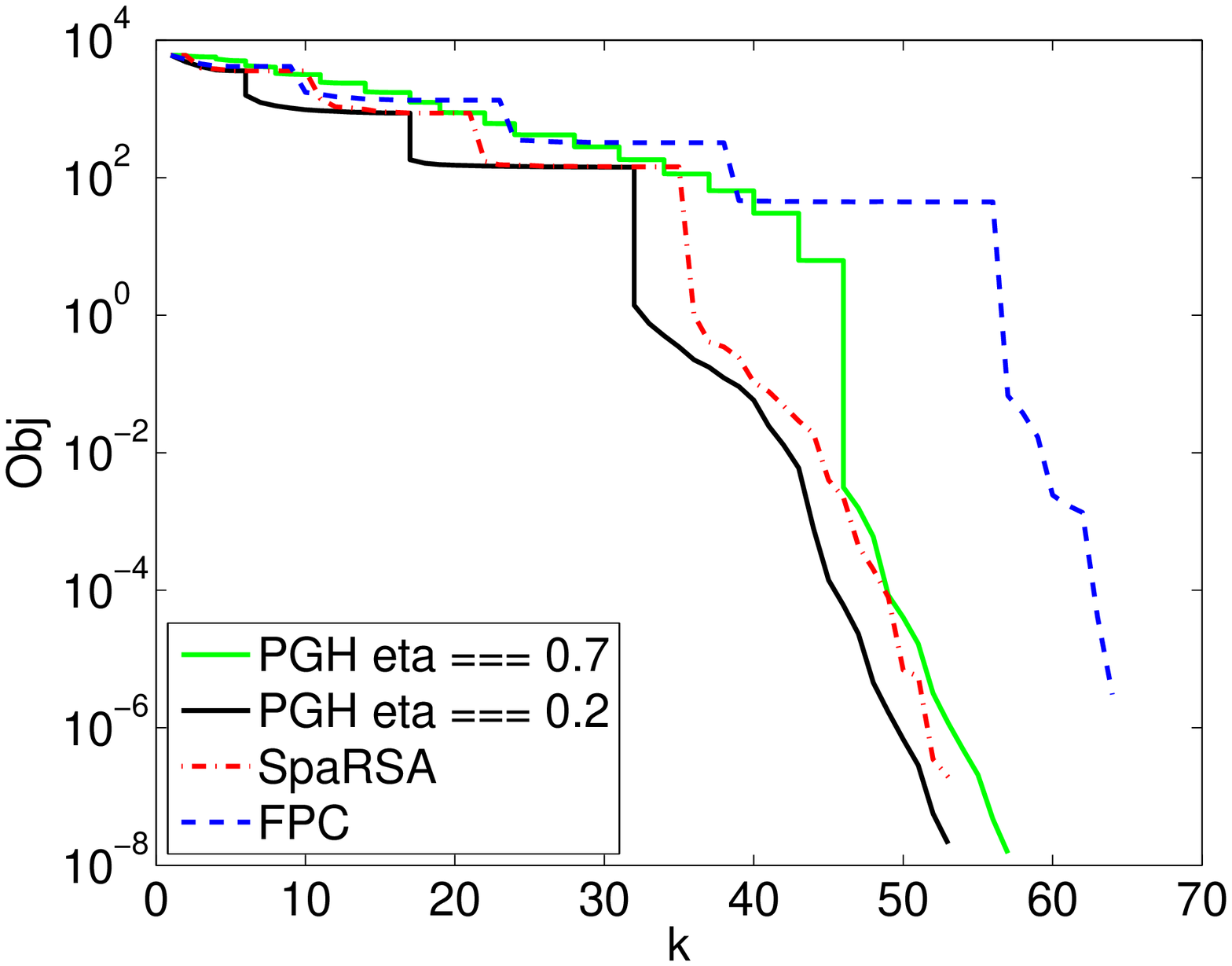}
\label{fig:fpc-sparsa-obj}}
\hfill
\subfloat[Number of inner iterations for each $\lambda_K$.]{
\includegraphics[width=0.46\textwidth]{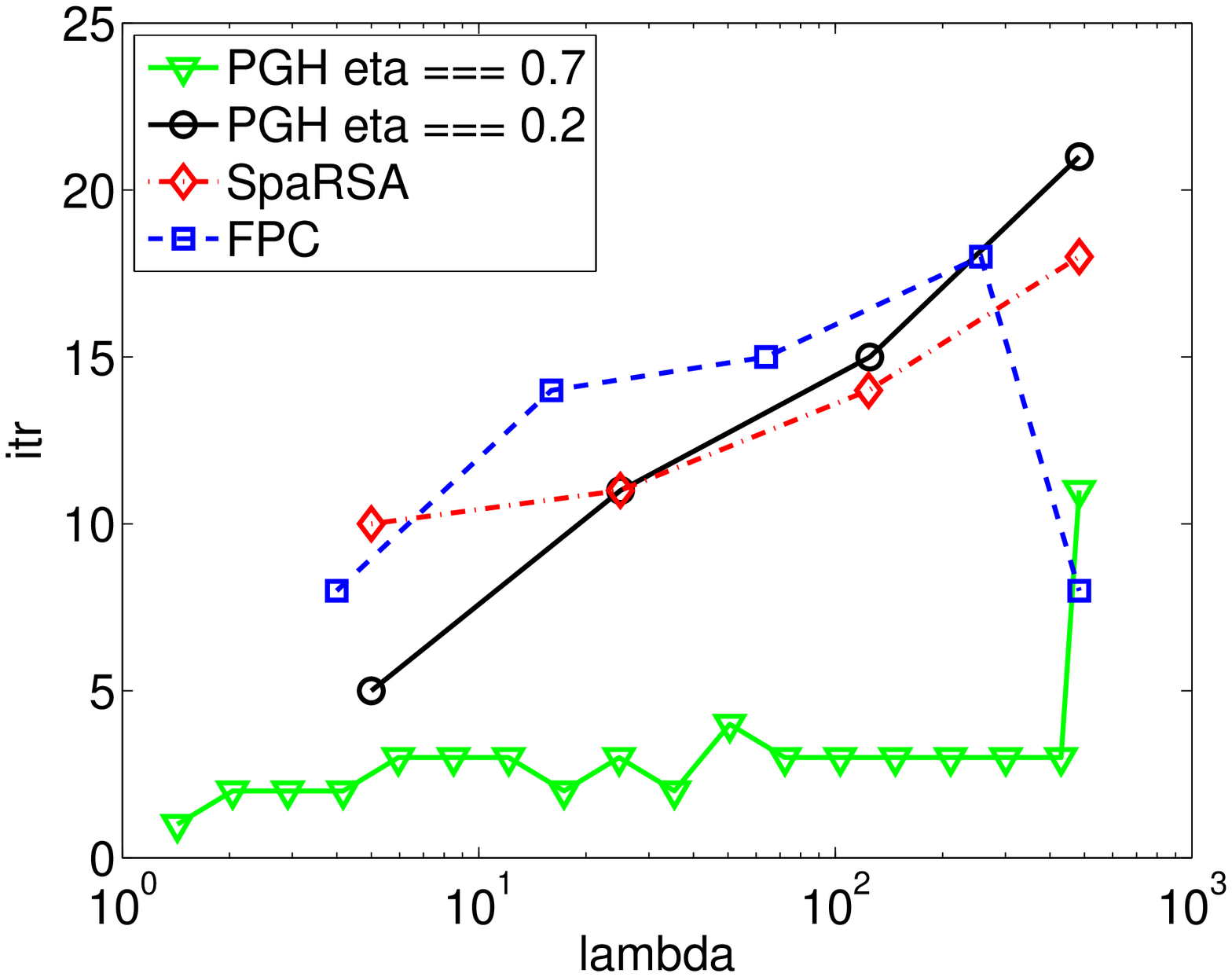}
\label{fig:fpc-sparsa-itr}}
\caption{Comparison with SpaRSA and FPC.}
\label{fig:fpc-sparsa}
\end{figure}

As mentioned in the introduction, similar approximate homotopy/continuation 
methods have been studied for the $\ell_1$-LS problem. 
Here we compare the PGH method with two most relevant ones:
sparse reconstruction by separable approximation (SpaRSA) \cite{WrightNF09}, 
and fixed point continuation (FPC) \cite{HaleYinZhang08}.
In particular, the same proximal gradient method~(\ref{eqn:prox-grad-step}) 
is used in each iteration of both SpaRSA and FPC.
Their continuation strategies are both based on reducing $\lambda$ by a 
constant factor at each stage.

SpaRSA uses Barzilai-Borwein (spectral) method for choosing~$L_k$
at each step.
More specifically, at each iteration the parameter~$L_k$ is initialized as
\[
L_k = \frac{\left\|A \left(x^{(k)}-x^{(k-1)}\right)\right\|_2^2}
{\|x^{(k)}-x^{(k-1)}\|_2^2},
\]
then it is increased by a constant factor until an acceptance criterion
is satisfied.
When both $x^{(k)}$ and $x^{(k-1)}$ are sparse, say 
$|\supp(x^{(k)}) \cup \supp(x^{(k-1)})| \leq s$ for some integer~$s$, then 
the above $L_k$ satisfies
\[
\rho_-(A,s) \leq L_k \leq \rho_+(A,s).
\]
According to Section~\ref{sec:restr-eig-cond}, such a line search method
is able to exploit the restricted strong convexity, similar as the PGH
method. However, the line-search acceptance criterion of SpaRSA is different
from PGH, and they also have different stopping criteria for each 
homotopy stage. 
Global geometric convergence of either SpaRSA or FPC has not been established.

In our numerical experiments, we used the monotone version of SpaRSA with 
continuation, which we call SpaRSA-MC.
For FPC, we used a more recent implementation by the authors of 
\cite{HaleYinZhang08} that also employs Barzilai-Borwein line search, 
which is called FPC-BB.
In fact FPC-BB solves the equivalent problem
\[
\minimize_x \quad \|x\|_1 + \frac{\mu}{2} \|Ax-b\|_2^2
\]
where $\mu=1/\lambda$. 
Moreover, it further scales the matrix~$A$ so that the maximum singular value 
is at most~$1$.
In Figures~\ref{fig:fpc-sparsa} and~\ref{fig:non-sparse}, the
results of FPC-BB are plotted after we reversed the scalings
in order to compare with other methods. 
Default options were used in both methods. 
SpaRSA-MC reduces the value of~$\lambda$ roughly with an factor $\eta=0.2$,
and FPC-BB has an equivalent factor $\eta=0.25$. 
For meaningful comparison, we also present the results for PGH with $\eta=0.2$,
in addition to its default value $\eta=0.7$.
The same relative precision $\delta=0.2$ was used in both cases for PGH. 

Figure~\ref{fig:fpc-sparsa} shows the numerical results of different algorithms
on the same random instance studied in Figure~\ref{fig:pgh}.
They demonstrate similar numerical properties, and SpaRSA-MC is especially 
similar to PGH with $\eta=0.2$. 
The numbers of iterations at each continuation stage depend on the
specific stopping criteria used in different algorithms. 
In Figure~\ref{fig:fpc-sparsa}\subref{fig:fpc-sparsa-itr},
the small number of iterations in the final stage of FPC-BB is a result
of the relatively loose precision specified in its default options, 
which is also reflected in 
Figure~\ref{fig:fpc-sparsa}\subref{fig:fpc-sparsa-obj}.
According to Figure~\ref{fig:eta}\subref{fig:eta-nnz},
the aggressive decreasing factors~$\eta$ used in SpaRSA and FPC  
can lead to less sparse iterates along the solution path, thus relatively
slower convergence at the intermediate stages.
But their overall iteration counts are comparable to PGH with $\eta=0.7$. 

\begin{figure}[t]
\centering
\psfrag{PG}{\small PG}
\psfrag{PGH eta = 0.7}{\small PGH $\eta=0.7$}
\psfrag{SpaRSA-M}{\small SpaRSA-M}
\psfrag{SpaRSA-MC}{\small SpaRSA-MC}
\psfrag{FPC}{\small FPC-BB}
\psfrag{FPC2}{\small FPC-BB-HA}
\psfrag{ADG}{\small ADG}
\psfrag{ADGHH}{\small ADGH}
\psfrag{k}[tc][tc]{$k$}
\psfrag{obj}[bc]{$\phi_\lambda(x^{(k)})-\phi_\mathrm{tgt}^\star$}
\psfrag{itr}[bc]{number of inner iterations}
\includegraphics[width=0.66\textwidth]{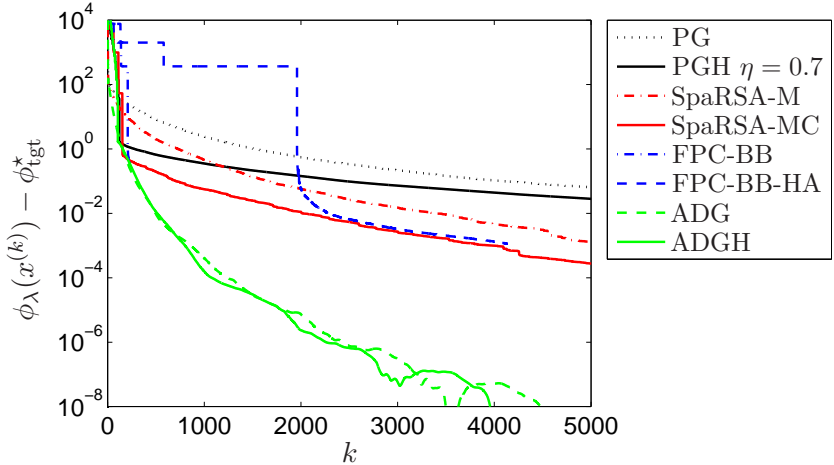}
\caption{Comparison of different methods for solving a non-sparse random
instance.}
\label{fig:non-sparse}
\end{figure}

We also conducted experiments with random problem instances where the 
vector $\bar{x}$ is not sufficiently sparse. 
Figure~\ref{fig:non-sparse} shows the objective gap of different methods
when solving a random problem instance generated similarly 
as the one studied in Figure~\ref{fig:pgh}.
The only difference is that here the vector $\bar{x}$ has $500$ nonzero 
elements. 
In this case, all methods demonstrate sub-linear convergence. 
SpaRSA-M is the monotone version of SpaRSA without continuation.
FPC-BB terminated prematurely because its default accuracy for its stopping
criterion is too low. 
FPC-BB-HA is the result after we set a much higher accuracy in calling the
FPC-BB method. 
It looks that the same higher accuracy is used in all the homotopy stages,
so the number of inner iterations increased for each stage. 
We see that the algorithms with homotopy continuation still perform better
than their single-stage counterparts, but the improvements are less impressive. 
Instead, the accelerated gradient methods ADG and ADGH 
outperform other methods by a big margin. 

\subsection{Basis pursuit}

\begin{figure}[t]
\centering
\psfrag{PGH eta === 0.7}{\small PGH $\eta=0.7$}
\psfrag{PGH eta === 0.2}{\small PGH $\eta=0.2$}
\psfrag{SpaRSA}{\small SpaRSA-MC}
\psfrag{FPC}{\small FPC-BB}
\psfrag{k}[tc][cl]{$k$}
\psfrag{Obj}[bc]{$\phi_\lambda(x^{(k)})$}
\psfrag{rcv}[bc]{$\|x^{(k)}-\bar{x}\|_2$}
\psfrag{lambda}[cc]{$\lambda_0/\lambda_K$}
\subfloat[Objective value. 
          Note that $\phi_\lambda^\star\to 0$ as $\lambda\to 0$.]{
\includegraphics[width=0.48\textwidth]{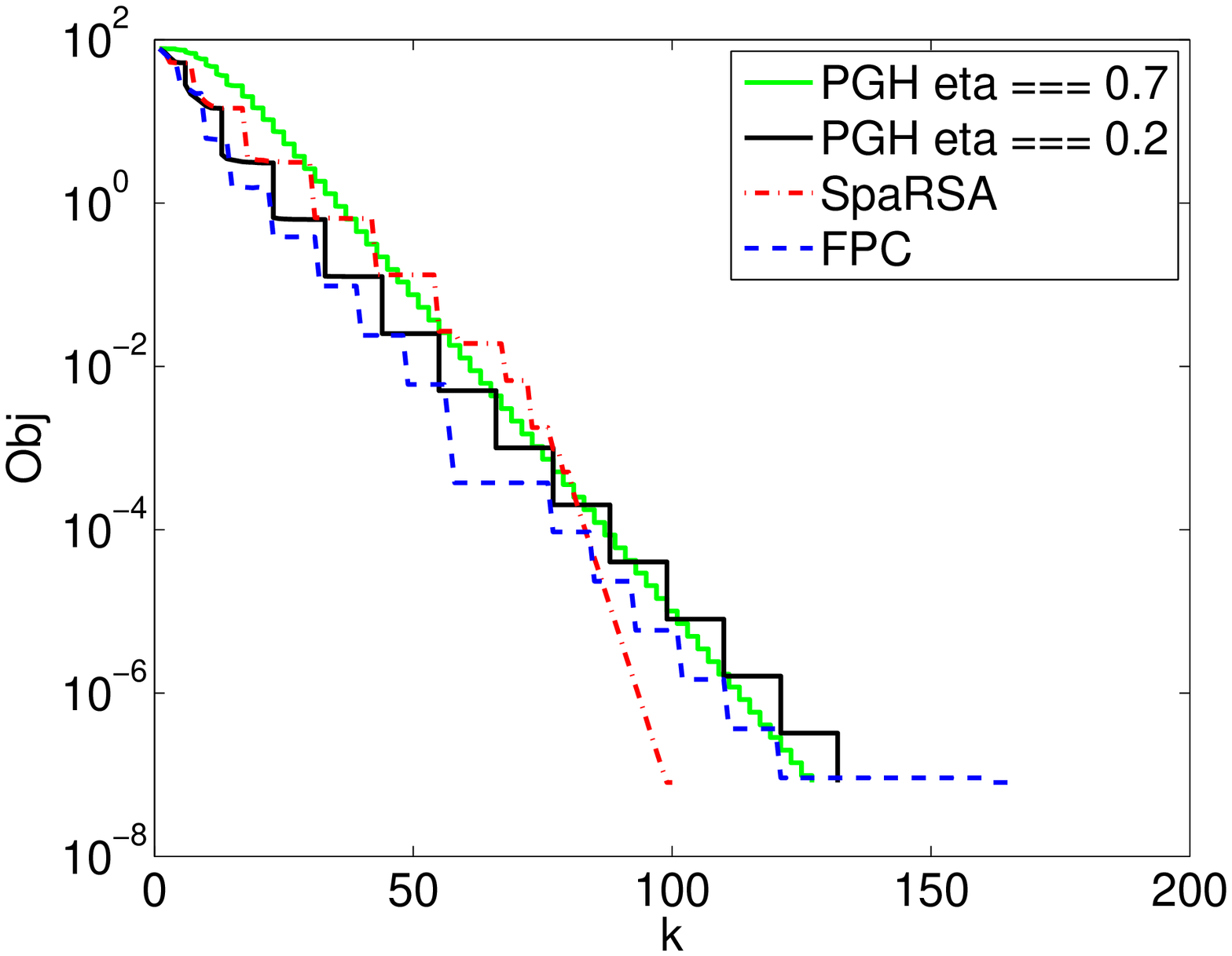}
\label{fig:fft-obj}
}
\hfill
\subfloat[Recovery error.]{
\includegraphics[width=0.48\textwidth]{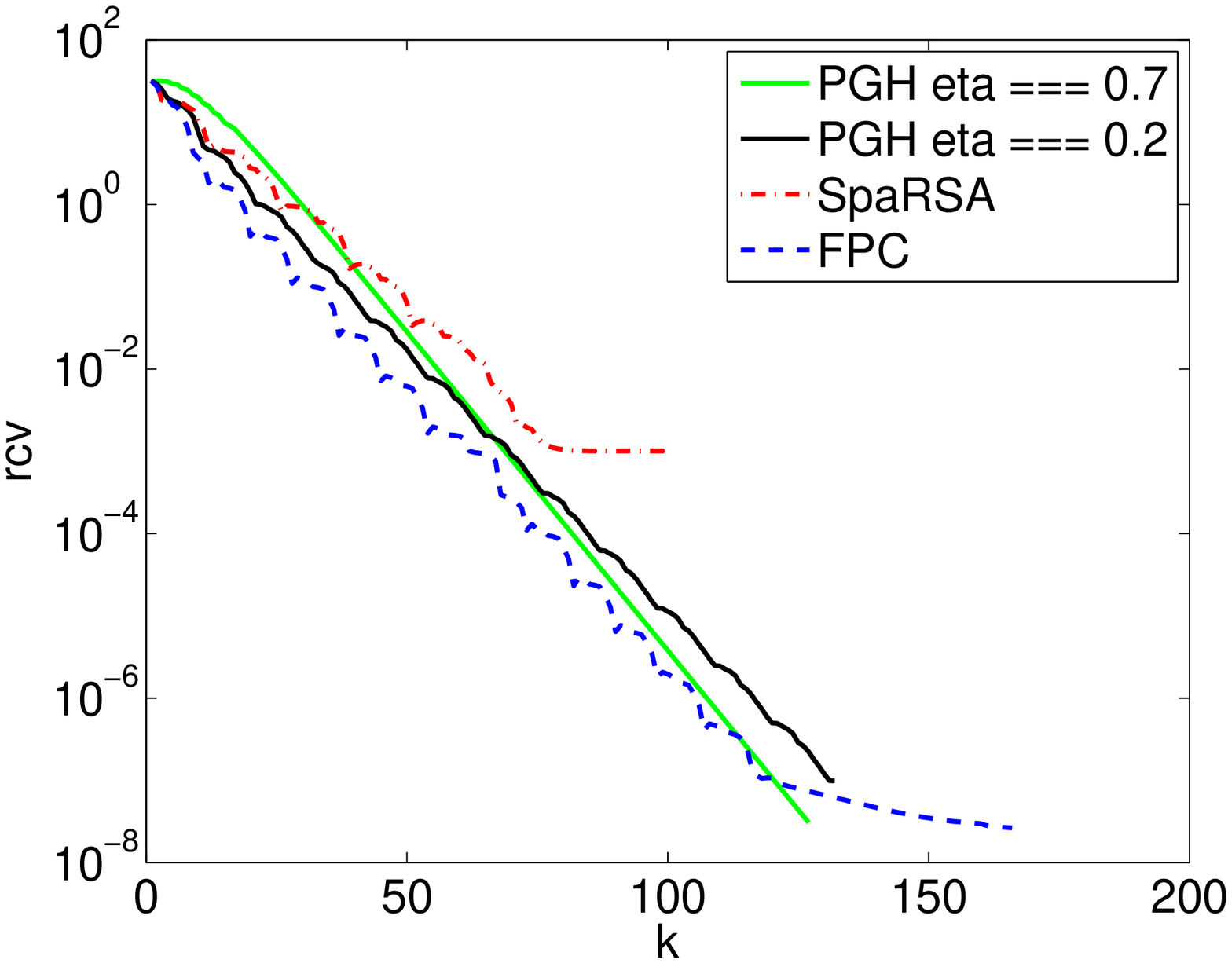}
\label{fig:fft-rcv}
} \\
\subfloat[Number of inner iterations]{
\includegraphics[width=0.48\textwidth]{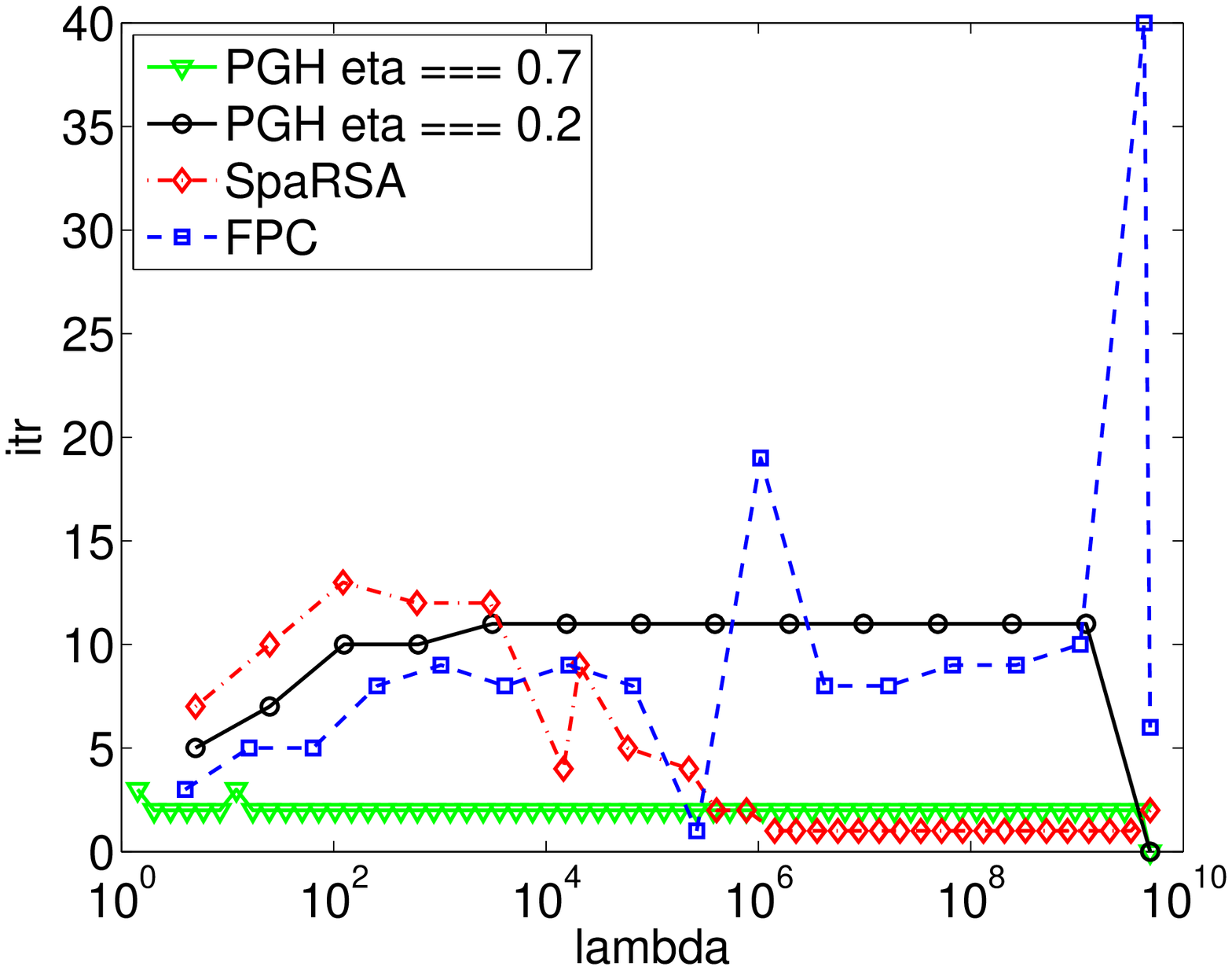}
\label{fig:fft-itr} 
}
\caption{Basis pursuit via homotopy continuation: 
an example with partial FFT matrix.}
\label{fig:fft}
\end{figure}

Finally we present an experiment of solving the basis pursuit (BP) 
problem~(\ref{eqn:BP}) using PGH, and compare it with FPC and SpaRSA.
In this experiment, the matrix~$A$ is a partial FFT matrix. 
More specifically, we choose $m=10,000$ rows at random from the $n\times n$ 
FFT matrix with $n=2^{16}=65536$. 
The vector $\bar{x}\in\reals^n$ has nonzero entries at only $\bar{s}=1000$ 
randomly chosen coordinates, and they were generated independently from the 
normal distribution with zero mean and unit variance. 
Then we set $b=A\bar{x}$ in the BP problem 
(i.e., this is the noise-free case with $z=0$).

In this case, since $A$ is a matrix with complex numbers, we need to 
replace all the real transpose in the algorithms with Hermitian transpose,
and replace the soft-thresholding operator in~(\ref{eqn:soft}) with 
\[
\SoftThresholding(x_i,\alpha) 
=\frac{\max\{|x_i|-\alpha, 0\}}{\max\{|x_i|-\alpha, 0\} + \alpha},
\]
where $|x_i|$ denotes the modulus of the complex number~$x_i$
\cite{WrightNF09}.

The solution to the BP problem~(\ref{eqn:BP}) can be obtained by letting
$\lambda\to 0$ in the $\ell_1$-LS problem~(\ref{eqn:l1-LS}).
In order to use the PGH method, we set $\lambda_\mathrm{tgt}=10^{-10}$.
The same parameter was also used in calling SpaRSA-MC and FPC-BB.
Figure~\ref{fig:fft} shows the numerical results. 
Again we observe remarkable resemblance between these methods 
in Figure~\ref{fig:fft}\subref{fig:fft-obj}.
However, in Figure~\ref{fig:fft}\subref{fig:fft-rcv}, we see the recovery
error of SpaRSA-MC stayed at the level $10^{-3}$ while its objective 
function in Figure~\ref{fig:fft}\subref{fig:fft-obj} converged to zero
faster than other methods.
The reason is that SpaRSA has a fixed accuracy requirement for all continuation
stages except for the last one.
As shown in Figure~\ref{fig:fft}\subref{fig:fft-itr}, 
when $\lambda_K$ becomes very small, this constant accuracy is always reached 
within one iteration, and such a low accuracy is too loose for the algorithm to 
track the homotopy path closely.
Therefore, even though the objective function converges to zero quickly,
the recovery error stayed large.
This is also confirmed through the denser continuation stages in the 
second half of SpaRSA-MC, as shown in Figure~\ref{fig:fft}\subref{fig:fft-itr}.
To see this, we note that the adaptive continuation used in SpaRSA is
\[
\lambda_{K+1} 
=\max\left\{\eta\|A^T(A\hat{x}^{(K)}-b)\|_\infty, ~\lambda_\textrm{tgt}\right\}.
\]
If~$\hat{x}^{(K)}$ is an accurate solution for the stage~$\lambda_K$, then
we have $\|A^T(A\hat{x}^{(K)}-b)\|_\infty\approx\lambda_K$ and thus
$\lambda_{K+1}\approx\eta\lambda_K$ with $\eta=0.2$ as the default value.
When this is not the case, then  
$\|A^T(A\hat{x}^{(K)}-b)\|_\infty$ can be notably larger than $\lambda_K$,
and thus the regularization parameter reduces at a much slower pace.
Similar as PGH, FPC-BB sets the accuracy for each continuation stage
to be proportional to the regularization parameter, but for a different
stopping criterion. 
With our choice of stopping criterion, 
$\omega_\lambda(\hat{x}^{(K)})\leq \delta\lambda_K$,
the number of inner iterations for each continuation stage stayed 
roughly constant along the homotopy path.

This example also demonstrates the advantage of PGH and other approximate 
homotopy continuation methods over the exact homotopy path-following 
methods \cite{OsbornePT00a, OsbornePT00b,EfronHJT04}.
Figure~\ref{fig:fft}\subref{fig:fft-rcv} shows that high-precision 
recovery can be obtained by PGH in less than $150$ iterations 
(which corresponds to roughly $450$ matrix-vector multiplications).
This is much more efficient than using the exact homotopy path-following 
methods, which need to track at least $1000$ breakpoints.
In addition, their computational cost at each break point is much higher
than a matrix-vector multiplication.

\section{Conclusion and discussions}
\label{sec:discussions}

This paper studied a proximal-gradient homotopy method for solving the
$\ell_1$-regularized least squares problems, focusing on its important 
application in sparse recovery. 
For such applications, the objective function is not strongly convex; 
hence the standard single-stage proximal gradient methods can only obtain 
relatively slow convergence rate. 
However, we have shown that under suitable conditions for sparse recovery, 
all iterates of the proximal-gradient homotopy method 
along the solution path are sparse.
With this extra sparsity structure, the objective function becomes effectively
strongly convex along the solution path, and thus a geometric rate of 
convergence can be achieved using the homotopy approach. 
Our theoretical analysis are supported by several numerical experiments.

We commented in the numerical experiments that accelerated gradient methods
cannot automatically exploit restricted strong convexity.
As discussed in \cite[Section~2.2]{Nesterov04book} and 
\cite{Nesterov07composite}, they need to explicitly use the strong convexity 
parameter, or a non-trivial lower bound of it,
to obtain geometric convergence.
In order to exploit restricted strong convexity in the $\ell_1$-LS problem 
with $m<n$, accelerated gradient methods need an extra facility
to come up with an explicit estimate of the restricted convexity parameter 
on the fly.
Nesterov gave some suggestions along this direction in 
\cite{Nesterov07composite}, and strategies such as periodic restart
have been studied recently \cite{GuLimWu09,BeckerCandesGrant11}.
However, an in-depth investigation on this matter is beyond the scope of 
this paper.

\bibliographystyle{alpha}
\bibliography{homotopy}

\end{document}